       %%%%% DOCUMENT CLASS %%%%%

\documentclass[a4paper, reqno, 10pt]{amsart}

%%%%% PACKAGES %%%%%

\usepackage{a4wide}
\usepackage{verbatim}
\usepackage[dvips]{color}
\usepackage[usenames,dvipsnames]{xcolor}
\usepackage{amsmath}
\usepackage{amssymb}
\usepackage{amsfonts}
\usepackage{dsfont}
\usepackage[active]{srcltx}
\usepackage[colorlinks,pdfpagelabels,pdfstartview = FitH,bookmarksopen = true,bookmarksnumbered = true,linkcolor = NavyBlue,plainpages = false,hypertexnames = false,citecolor = OliveGreen] {hyperref}
\usepackage[italian, textsize=tiny, color=BlueGreen, backgroundcolor=BlueGreen, linecolor=BlueGreen]{todonotes}
\usepackage{mathtools}
\usepackage{cite}
\usepackage{empheq}
\usepackage{textcomp}
\setlength{\marginparwidth}{2cm}
\newcommand\blfootnote[1]{%
	\begingroup
	\renewcommand\thefootnote{}\footnote{#1}%
	\addtocounter{footnote}{-1}%
	\endgroup
}

%%%% PAGE SETTINGS %%%%%

\allowdisplaybreaks

%%%%% COUNTERS %%%%%

\setcounter{tocdepth}{1}

%%%%% DATE %%%%%

%\date{today}

%%%%% TITLE %%%%%

\title[]{Global gradient estimates for the mixed local and nonlocal problems with measurable nonlinearities}

%%%%% AUTHORS %%%%%

\author[Byun]{Sun-Sig Byun}
\address{Department of Mathematical Sciences and Research Institute of Mathematics,
	Seoul National University, Seoul 08826, Korea}
\email{byun@snu.ac.kr}

\author[Kumar]{Deepak Kumar}
\address{Research Institute of Mathematics,
	Seoul National University, Seoul 08826, Korea}
\email{deepak.kr0894@gmail.com}

\author[Lee]{Ho-Sik Lee}
\address{Fakult\"at für Mathematik, Universität Bielefeld, 33615 Bielefeld, Germany}
\email{ho-sik.lee@uni-bielefeld.de}

%\author[]{}
%\address{}
%\email{}

\makeatletter
\@namedef{subjclassname@2020}{\textup{2020} Mathematics Subject Classification}
\makeatother
% \date{\today}
% It is required to enter 2010 MSC.
\subjclass[2020]{Primary: 35B65; Secondary: 35D30, 35R11, 35R05, 35J60}
% Please provide minimum  5 keywords.
\keywords{Calder\'{o}n-Zygmund estimate, Mixed local and nonlocal equation, Measurable nonlinearity, Reifenberg-flat domain}

%%%%% THEOREM STYLES %%%%%

\newtheorem{theorem}{Theorem}[section]

\newtheorem{proposition}[theorem]{Proposition}
\newtheorem{lemma}[theorem]{Lemma}

\newtheorem{corollary}[theorem]{Corollary}
\theoremstyle{definition}
\newtheorem{definition}[theorem]{Definition}
\newtheorem{remark}[theorem]{Remark}%[section]

\numberwithin{equation}{section}

%%%%% EQUATIONS %%%%%

\def\eqn#1$$#2$${\begin{equation}\label#1#2\end{equation}}
\def\charfn_#1{{\raise1.2pt\hbox{$\chi_{\kern-1pt\lower3pt\hbox{{$\scriptstyle#1$}}}$}}}

\makeatletter
\newcommand{\pushright}[1]{\ifmeasuring@#1\else\omit\hfill$\displaystyle#1$\fi\ignorespaces}
\newcommand{\pushleft}[1]{\ifmeasuring@#1\else\omit$\displaystyle#1$\hfill\fi\ignorespaces}
\makeatother

%%%%% GREEK LETTERS %%%%%

\def\ep{\varepsilon}

\newcommand{\vp}{\varphi}

%%%%% OPERATORS %%%%%

\def\dist{\operatorname{dist}}

\newcommand{\divo}{\textnormal{div}}

\newcommand{\data}{\texttt{data}}

%%%%% SPACES %%%%%

\def\er{\mathbb R}
\newcommand{\ern}{\mathbb{R}^n}

\def\loc{{\operatorname{loc}}}

\newcommand{\supp}{{\rm supp}}

%%%%% MISC %%%%%

%%%%% INTEGRALS %%%%%

\def\mean#1{\mathchoice%
	{\mathop{\kern 0.2em\vrule width 0.6em height 0.69678ex depth -0.58065ex
			\kern -0.8em \intop}\nolimits_{\kern -0.4em#1}}%
	{\mathop{\kern 0.1em\vrule width 0.5em height 0.69678ex depth -0.60387ex
			\kern -0.6em \intop}\nolimits_{#1}}%
	{\mathop{\kern 0.1em\vrule width 0.5em height 0.69678ex
			depth -0.60387ex
			\kern -0.6em \intop}\nolimits_{#1}}%
	{\mathop{\kern 0.1em\vrule width 0.5em height 0.69678ex depth -0.60387ex
			\kern -0.6em \intop}\nolimits_{#1}}}

\def\vintslides_#1{\mathchoice%
	{\mathop{\kern 0.1em\vrule width 0.5em height 0.697ex depth -0.581ex
			\kern -0.6em \intop}\nolimits_{\kern -0.4em#1}}%
	{\mathop{\kern 0.1em\vrule width 0.3em height 0.697ex depth -0.604ex
			\kern -0.4em \intop}\nolimits_{#1}}%
	{\mathop{\kern 0.1em\vrule width 0.3em height 0.697ex depth -0.604ex
			\kern -0.4em \intop}\nolimits_{#1}}%
	{\mathop{\kern 0.1em\vrule width 0.3em height 0.697ex depth -0.604ex
			\kern -0.4em \intop}\nolimits_{#1}}}

\newcommand{\aveint}[2]{\mathchoice%
	{\mathop{\kern 0.2em\vrule width 0.6em height 0.69678ex depth -0.58065ex
			\kern -0.8em \intop}\nolimits_{\kern -0.45em#1}^{#2}}%
	{\mathop{\kern 0.1em\vrule width 0.5em height 0.69678ex depth -0.60387ex
			\kern -0.6em \intop}\nolimits_{#1}^{#2}}%
	{\mathop{\kern 0.1em\vrule width 0.5em height 0.69678ex depth -0.60387ex
			\kern -0.6em \intop}\nolimits_{#1}^{#2}}%
	{\mathop{\kern 0.1em\vrule width 0.5em height 0.69678ex depth -0.60387ex
			\kern -0.6em \intop}\nolimits_{#1}^{#2}}}

%%%%% BIBLIOGRAPHY %%%%%

\newtoks\by
\newtoks\paper
\newtoks\book
\newtoks\jour
\newtoks\yr
\newtoks\pages
\newtoks\vol
\newtoks\publ

\def\ota{{\hbox{\bf ???}}}
\def\cLear{\by=\ota\paper=\ota\book=\ota\jour=\ota\yr=\ota
	\pages=\ota\vol=\ota\publ=\ota}
\def\endpaper{\the\by, \textit{\the\paper},
	{\the\jour} \textbf{\the\vol} (\the\yr), \the\pages.\cLear}
\def\endbook{\the\by, \textit{\the\book},
	\the\publ, \the\yr.\cLear}
\def\endpap{\the\by, \textit{\the\paper}, \the\jour.\cLear}
\def\endproc{\the\by, \textit{\the\paper}, \the\book, \the\publ,
	\the\yr, \the\pages.\cLear}

%%%%%%%%%%%%% DOCUMENT BEGINS

\begin{document}
	\maketitle
	\begin{abstract}
A non-homogeneous mixed local and nonlocal problem in divergence form is investigated for the validity of the global Calder\'on-Zygmund estimate for the weak solution to the Dirichlet problem of a nonlinear elliptic equation. We establish an optimal Calder\'on-Zygmund theory by finding not only a minimal regularity requirement on the mixed local and nonlocal operators but also a lower level of geometric assumption on the boundary of the domain for the global gradient estimate. More precisely, assuming that the nonlinearity of the local operator, whose prototype is the classical $(-\Delta_p)^1$-Laplace operator with $1<p<\infty$, is measurable in one variable and has a small BMO assumption for the other variables, while the singular kernel associated with the nonlocal $(-\Delta_q)^s$-Laplace operator with $0<s<1<q<\infty$ is merely measurable, and that the boundary of the domain is sufficiently flat in Reifenberg sense, we prove that the gradient of the solution is as integrable as that of the associated non-homogeneous term in the divergence form.
  
  % We investigate the validity of the global Calder\'{o}n-Zygmund estimates for weak solutions of  mixed local and nonlocal problems with divergence type data by imposing the minimal regularity assumptions. In detail, the nonlinear local operator, whose prototype is the classical $p$-Laplacian, is assumed to be merely measurable in one variable and small BMO assumption for the other variables. The condition for singular kernel related to the nonlocal $q$-Laplacian is only measurable. The domain is assumed to be  Reifenberg flat.
	\end{abstract}
	%%%%%%%%%%%

\blfootnote{S.-S. Byun was supported by NRF-2022R1A2C1009312, D. Kumar was supported by NRF-2021R1A4A1027378 and H.-S. Lee
was supported by the Deutsche Forschungsgemeinschaft (DFG, German Research Foundation) through (GRK 2235/2
2021 - 282638148) at Bielefeld University.}

\section{Introduction and Main Results}
In the present paper, we study some regularity properties of weak solutions to the mixed local and nonlocal problem whose prototype is given by
\begin{align}\label{eq:model}
	-\divo(|Du|^{p-2}Du)+(-\Delta_q)^su=-\divo(|F|^{p-2}F)\quad\text{in}\quad\Omega,
\end{align}
where $\Omega\subset\ern$ ($n\geq 2$) is a bounded domain and the operator $(-\Delta_q)^s$ is defined by
\begin{align*}%\label{eq:L0}
	(-\Delta_q)^s u(x)=P.V.\int_{\ern}\dfrac{|u(x)-u(y)|^{q-2}(u(x)-u(y))}{|x-y|^{n+sq}}\,dy
\end{align*}
with constants $s,p$ and $q$ satisfying
\begin{align}\label{eq:s1pq}
	0<s<1<p, q<\infty
\end{align}
and 
\begin{align}\label{eq:sq<p}
	p>sq.
\end{align}
Here, $F:\Omega\rightarrow\ern$ is a given vector field such that $|F|\in L^p(\Omega)$. The equation \eqref{eq:model} is the Euler-Lagrange equation of the following variational problem:
\begin{align*}%\label{eq:var}
W^{1,1}(\Omega)\ni u\mapsto\int_{\Omega}[|Du|^p-|F|^{p-2}F\cdot Du]\,dx+\int_{\ern}\int_{\ern}\dfrac{|u(x)-u(y)|^{q}}{|x-y|^{n+sq}}\,dx\,dy.
\end{align*}
%{\color{blue}Please check the condition on $W$.}
%\textcolor{magenta}{It looks okay. We can consider simplist admissible set $W^{1,1}$ (or $W^{1,p}$) even if we allow infinity to the functional, before declaring exact admissible set.}

Mixed local and nonlocal problems have been comprehensively studied in recent years due to their extensive applications in the real world. For instance, these kinds of operators model diffusion patterns with different scales and also appears in the theory of
optimal searching strategies, biomathematics, animal foraging and so on; see for instance, \cite{Dip-Nem-logst} and references therein.  Another interesting application comes from the superposition of two stochastic processes with different scales such as a classical random walk and a L\'evy process, we refer to \cite{Dip-Val-phy} for further details on this. From the mathematical analysis point of view, the mixed local and nonlocal operator can be regarded as the limiting case of the strongly non-homogeneous nonlocal operators,  $(-\Delta_p)^{s_1}+(-\Delta_q)^{s_2}$ with $0<s_1,s_2<1$, as $s_1\to 1$.\par

\subsection{State of the Art}
The $p$-Laplace problem is a specific example of the following $p$-growth problem:
\begin{align}\label{eq:pgrowth}
-\divo(A(x,Du))=-\divo(|F|^{p-2}F)\quad\text{in }\Omega,
\end{align}
where $F:\Omega\rightarrow\ern$ and $A(\cdot,\cdot):\Omega\times\ern\rightarrow\ern$ is a Carath\'{e}odory vector field with $|\partial_zA(x,z)|\simeq |z|^{p-2}$. In the case of \eqref{eq:pgrowth}, $|F|\in L^{\gamma}$ does not imply $|Du|\in L^{\gamma}$, for every $\gamma>p$, if there is no regularity assumption (with respect to the $x$-variable) on the nonlinearity $A$ (cf. \cite{M0}). In this regard, the small BMO assumption is usually considered in the literature, see for instance \cite{D2,KZ,CP,BW0}, but later the weak small BMO assumption as in Definition \ref{def:BMO.l} is recognized as a minimal condition. Moreover, the latter condition allows a big jump in one direction of the variable in $\Omega$, and so our problem is deeply related to the transmission problem, see \cite{BX,ERS}. When $p=2$, the gradient estimate is proved with such a minimal assumption in \cite{BW1,DK,KK2} for the linear case and in \cite{BK} for the linear growth case. In \cite{KimY1}, a Moser type iteration for the regularized problem of \eqref{eq:pgrowth} and an approximation argument are used to consider the $p$-growth case. Recently, the gradient estimate is proved under the setting of generalized $p$-growth problem in \cite{BL1}.\par

The nonlocal nonlinear operators, particularly of the kind $(-\Delta_q)^s$, have been studied extensively in recent years, especially concerning the regularity perspective of solutions involving these operators. 
%because of the ample amount of applications and interesting analysis.   
For instance, Di Castro et al. \cite{DKP} and Iannizzotto et al. \cite{IMS} proved local H\"older continuity results for Dirichlet problems involving the fractional $q$-Laplacian (or the minimizers of corresponding energy functionals). Subsequently, Brasco et al. \cite{BLS} obtained higher H\"older continuity results for local weak solutions. Interested readers may refer to \cite{BKO,BOS,chaker,GKS,GKS2,BKK-PubM,defil-pala} for continuity results of nonlocal problems involving non-standard growth operators. Furthermore, Kuusi et al. \cite{KMS} obtained a self-improving property for weak solutions. Recently, Byun and Kim in \cite{BK-CZ-pfrac}, and Diening and Nowak \cite{DiNo-CZ}, independently, obtained Calder\'{o}n-Zygmund type estimates for nonlocal problems of $p$-growth and irregular kernel coefficients. %{\color{blue} Precisely, in the former work, authors considered problems with BMO kernel coefficients and nonlocal divergence type reaction term, whereas in the latter, the case of non-divergence type reaction term and VMO kernel coefficient is studied.} 
In this direction, we also mention the works \cite{FMSY,MSY,Now-AAG,caff-silv} for some related results on nonlocal problems with certain assumptions on the kernel coefficients.

The linear mixed operators; that is, $p=q=2$ in \eqref{eq:model}, have a rich literature in terms of the regularity theory, see for instance, the notable works of Dipierro, Valdinoci et al. \cite{Biag-math-eng,Biag-cpde,Biag-prse} and references therein.  However, the nonlinear case is relatively new and only a few results are known in this case. Particularly, Garain and Kinnunen \cite{GK1} established the H\"older continuity results by using the De Giorgi-Nash-Moser technique, for $p=q\neq 2$. Recently, Garain and Lindgren \cite{Garain-cvpde} obtained higher H\"older regularity for weak solutions of \eqref{eq:model} with $p=q\geq 2$; that is, the super-quadratic case, by using the techniques of \cite{BLS}. See also \cite{nakamura,shang-zhang-par,byun-song} and references therein for some other regularity results involving the mixed local-nonlocal type operator appearing as in \eqref{eq:model} with $p=q$. As far as the case $p\neq q$ is concerned, we mention the very recent work of De Filippis and Mingione \cite{defilip-ming-mixed}, where local gradient regularity is established for minimizers of functional whose Euler-Lagrange equation involves the mixed local-nonlocal type operator. Specifically, the gradient of a minimizer $u$ of the functional 
\begin{align*}
    \int_{\Omega}[F(Du)-fu]\,dx+\int_{\ern}\int_{\ern}\Phi(u(x)-u(y))K(x,y)\,dx\,dy,
\end{align*}
where $F(z)\simeq |z|^p$, $\Phi(t)\simeq |t|^q$ and $K(x,y)\simeq |x-y|^{-n-sq}$ is a singular kernel, is H\"older continuous if $p>sq$ and $f\in L^\gamma_{\loc}(\Omega)$ with $\gamma>n$. Additionally, they also proved that when the domain $\Omega$ is of type $C^{1,\alpha}$, for some $\alpha\in (0,1)$ and the exterior data is globally Lipschitz continuous (or more generally it satisfies a certain type of global higher order fractional Sobolev regularity), then the minimizer $u$ is globally almost Lipschitz regular. We also refer \cite{BLS2} for regularity results in case of $p\neq q$ for mixed local and nonlocal double phase problems.\par

%We also point out that in the case of local $p$-Laplace, the gradient estimates for $L^t$-space with $t>p$ are proved in the paper \cite{I1} and later in \cite{KZ,BDGP} using sharp maximal function. {\color{red}In \cite{CP}, the maximal function technique is developed and used in \cite{AM0,BW0}.} Moreover, the so-called maximal function-free technique is established in \cite{AM1} and employed in \cite{BWZ,M3,WYZJ,TZK} to obtain the gradient estimates. {\color{red}The BMO type estimates are obtained by \cite{DM0,DKS}, and the Besov and Triebel-Lizorkin type estimates are proved by \cite{BDW,CGP}.} 

%In \cite{CM2,BCDM}, the authors consider $$-\divo(|Du|^{p-2}Du)=f$$ and prove that $f\in L^2$ implies $|Du|^{p-2}Du\in W^{1,2}$. When $f$ is a measure, higher differentiability and integrability of $Du$ and $|Du|^{p-2}Du$ is considered in \cite{M2,AKM} under the setting of SOLA.

\subsection{Overview of Analytic Tools}
In this paper, our aim is to obtain the global behavior of the gradient of weak solutions to mixed local-nonlocal problems, which to the best of our knowledge, was not known earlier. To this end, we consider a more general operator than the one consisting of the sum of the $p$-Laplacian and the $s$-fractional $q$-Laplacian involving a divergence type reaction term, see \eqref{eq:inter} with structure conditions \eqref{eq:L} and \eqref{eq:A}. Here, we stress that similar to the purely local problems, the nonlinearity of the leading operator is simply a measurable function in one variable and satisfies a small BMO assumption on the rest of the variables. However, for the nonlocal term, the kernel coefficient is merely assumed to be measurable. To prove the desired global Calder\'{o}n-Zygmund type estimates, the condition on the boundary of the domain also plays an important role. Here, as a minimal condition, we assume that the boundary of the domain is Reifenberg flat set (see Definition \ref{def:reif}). Notably, each Lipschitz domain with small Lipschitz constant is Reifenberg flat, but the domain whose boundary satisfies Reifenberg flat condition may not be a Lipschitz domain and such a domain is allowed to have fractal boundaries. For a further study on Reifenberg flat domain, we refer to \cite{KT,LMS}. We point out that in the local elliptic equation, many related articles for global gradient estimates appear as in \cite{BD,BW0,C2,MP,P1}, but in mixed problems there is no such a result under the setting of rough domains even like Lipschitz domains.\par 

 In order to achieve the aforementioned gradient estimates, we rely on the maximal function-free technique as developed in \cite{AM1} and later used in \cite{col-ming-JFA}. The method  heavily depends upon the use of a suitable comparison argument which requires a global self-improving property (small higher integrability of the gradient) of the weak solution to the original problem. 
We provide a detailed proof of this in Section 3 as the rigorous analysis done here helps obtain comparison estimates in subsequent sections. In this regard, a main difficulty arises from the appearance of a nonlocal tail term. More precisely, the tail term interacts with the exit-time (or stopping) cylinders and it also requires technical cares of the solution near the boundary. To deal with this, we further split the tail term appropriately into finitely many quantities over annuli-type cylinders of size comparable to the exit-time radius which contain information on the gradients of the solution and the exterior data. 
It is worth mentioning that our main results complement and extend the global almost Lipschitz regularity obtained in \cite{defilip-ming-mixed} to the case of less regular domains and more general nonlinearities, namely, we assume the domain to satisfy Reifenberg flatness condition and the nonlinearity to have a small BMO seminorm (see Remark \ref{rem:def-ming-mix} for details).

\subsection{Assumptions and Main Results} 
In this subsection, we present our main problems and corresponding results. We further distinguish into the case of interior estimates for local weak solutions and the case of boundary estimates for the weak solution to Dirichlet problems.  
Throughout the article, for $p,q$ satisfying \eqref{eq:s1pq} and \eqref{eq:sq<p}, we set 
\begin{align}\label{eq:X.l}
\mathbb{X}(\Omega):=\begin{cases}
W^{1,p}(\Omega) \quad\quad\quad\quad\quad\,\mbox{if }q\leq p,\\
W^{1,p}(\Omega)\cap L^\infty(\Omega) \quad\mbox{if }q> p.
\end{cases}
\end{align}
We say that $u\in\mathbb{X}_{\loc}(\Omega)$, if $u\in\mathbb{X}(\tilde{\Omega})$ for all $\tilde{\Omega}\Subset\Omega$. 
 Next, we also set 
	\begin{align}\label{eq:data}
		\data:=\{n,s,p,q,\Lambda\} \quad\mbox{and}\quad \mathcal{M}(v;U):=\begin{cases}
	\|v\|_{L^\infty(U)} \quad\,\,\mbox{ if }q>p,\\  
	1 \quad\quad\quad\quad\quad\mbox{ if } q\leq p,
	\end{cases} \end{align}
 where $U\subset\ern$ and  $v\in \mathbb{X}(U)$.
%We will often suppress the dependence on function if it is understood in the context. 
\subsubsection{\textbf{Interior case}}
We consider the following general problem in this subsection:
\begin{align}\label{eq:inter}
	-\divo(A(x,Du))+\mathcal{L}u=-\divo(|F|^{p-2}F)\quad\text{in}\quad\Omega,
\end{align}
with
\begin{align}\label{eq:L}
	\mathcal{L}u(x):=P.V. \int_{\ern} |u(x)-u(y)|^{q-2}(u(x)-u(y))K(x,y)\,dy.
\end{align}
Here, the Carath\'{e}odory vector field $A(\cdot,\cdot):\Omega\times\ern\rightarrow\ern$ is $C^1(\ern\setminus\{0\})$-regular  in the second variable and satisfies the following condition:
\begin{align}\label{eq:A}
	\begin{cases}
		|A(x,z)||z|+|\partial_z A(x,z)||z|^2\leq \Lambda|z|^{p}\\
		\Lambda^{-1}|z|^{p-2}|\xi|^2\leq \left<\partial_zA(x,z)\xi,\xi\right>
	\end{cases}
\end{align}
for a.e. $x\in\Omega$ and all $z,\xi\in\ern$, and the singular kernel $K(\cdot,\cdot):\ern\times\ern\rightarrow\er$ satisfies
\begin{align}\label{eq:K}
	\Lambda^{-1}\leq K(x,y) |x-y|^{n+sq}\leq \Lambda \quad\mbox{for all }x\neq y\in\ern,
\end{align}
with a constant $\Lambda\geq 1$.
We can assume that $K(x,y)$ is symmetric in the sense that $K(x,y)=K(y,x)$, since otherwise we can replace $K(x,y)$ with $K^*(x,y)=\frac{K(x,y)+K(y,x)}{2}$.\par  
For given $|F|\in L^p_{\loc}(\Omega)$, a function $u\in W^{1,p}_{\loc}(\Omega)\cap W^{s,q}_{\loc}(\Omega)\cap L^{q-1}_{sq}(\ern)$ is said to be a local weak solution of \eqref{eq:inter} if
\begin{align*}%\label{eq:weak.l}
	\begin{split}
		&\int_{\Omega}A(x,Du)\cdot D\vp\,dx+\int_{\ern}\int_{\ern}|u(x)-u(y)|^{q-2}(u(x)-u(y))(\vp(x)-\vp(y))K(x,y)\,dx\,dy\\
		&\quad=\int_{\Omega}|F|^{p-2}F\cdot D\vp\,dx
	\end{split}
\end{align*}
holds for all $\vp\in W^{1,p}(\Omega)\cap W^{s,q}(\Omega)$ with compact support contained in $\Omega$. Here, the fractional Sobolev space $W^{s,q}(\Omega)$ and the tail space $L^{q-1}_{sq}(\ern)$ will be defined later in Section 2.

We next introduce the weak small BMO assumption on the vector field $A$. For this,  we write the $n$-dimensional cylinder as $Q_r(y)=(y_1-r,y_1+r)\times B'_r(y')$, where  $y\in(y_1,y')\in\er\times\er^{n-1}$ and $B'_r(y')$ is the ball in $\er^{n-1}$ with center $y'$ and radius $r>0$ (see Section 2 for precise definitions).
%Then we define the following smallness assumption on $A$.
\begin{definition}\label{def:BMO.l}
 For given parameters $\delta\in(0,1)$ and $R>0$, we say that \textit{$A$ is $(\delta,R)$-vanishing of codimension 1} if
	\begin{align}\label{eq:BMO.l}
		\sup_{0<r\leq R}\sup_{y\in\Omega}\mean{Q_r(y)}\kappa(A,Q_r(y))(x)\,dx\leq\delta,
	\end{align}
 where the function $\kappa(A,Q_r(y))(\cdot):Q_r(y)\rightarrow\er$ is defined as
\begin{align}\label{eq:kappa}
	\kappa(A,Q_r(y))(x)=\sup_{z\in\ern\setminus\{0\}}\dfrac{|A(x_1,x',z)-\bar{A}_{B'_r(y')}(x_1,z)|}{|z|^{p-1}}
\end{align}
with $\bar{A}_{B'_r(y')}(\cdot,\cdot):(y_1-r,y_1+r)\times\ern\rightarrow\ern$ given by
\begin{align*}
	\bar{A}_{B'_r(y')}(x_1,z)=\mean{B'_r(y')}A(x_1,x',z)\,dx'.
\end{align*}
\end{definition}

\begin{remark}
With the aid of \eqref{eq:A} and \eqref{eq:kappa}, we notice that
	\begin{align}\label{eq:kappa.2L}
		|\kappa(A,Q_r(y))(x)|\leq 2\Lambda\quad\text{for a.e.  }x\in Q_r(y)
	\end{align}
 holds without assuming any vanishing condition on $A$. Moreover, a typical example for $A(x,z)$ satisfying \eqref{eq:BMO.l} is the following:
	\begin{align*}
		A(x,z)=a_1(x_1)a_2(x')|z|^{p-2}z,
	\end{align*}
	where $a_1(\cdot):\er\rightarrow\er$ and $a_2(\cdot):\er^{n-1}\rightarrow\er$ with $\Lambda^{-1}\leq a_i(\cdot)\leq \Lambda$ for $i=1,2$, and $[a_2]_{\text{BMO}(\er^{n-1})}\leq\delta/\sqrt{\Lambda}$. 
\end{remark}

Having $\mathbb{X}$ given by \eqref{eq:X.l}, and $\data$ and $\mathcal{M}$ as in \eqref{eq:data} in mind, we now state the main theorem in the interior case. 

\begin{theorem}\label{thm:l}
 Let $u\in \mathbb{X}_{\loc}(\Omega)\cap  L^{q-1}_{sq}(\ern)$ be a local weak solution of \eqref{eq:inter} under the assumptions \eqref{eq:s1pq}, \eqref{eq:sq<p}, \eqref{eq:A} and \eqref{eq:K}. Suppose that $|F|\in L^{\gamma}_{\loc}(\Omega)$ for $\gamma>p$. Then, for every $\widetilde{\Omega}\Subset\Omega$, there exist constants $\delta$ and $r_*\in (0,1)$ both depending only on $\data$, $\mathcal{M}(u;\widetilde{\Omega})$ and $\gamma$ such that if $A$ is $(\delta,r_*)$-vanishing of codimension 1, then we have $|Du|\in L^{\gamma}(\widetilde{\Omega})$. Moreover, there exists a constant $c=c(\data, \mathcal{M}(u;\widetilde{\Omega}),\gamma)$ such that 
	\begin{align*}%\label{eq:l}
		\left(\mean{Q_{R}(x_0)}|Du|^{\gamma}\,dx\right)^\frac{1}{\gamma}&\leq c \left(\mean{Q_{2R}(x_0)}|Du|^p\,dx\right)^\frac{1}{p}
		+c\left(\mean{Q_{2R}(x_0)}(|F|^{\gamma}+1)\,dx\right)^\frac{1}{\gamma}\nonumber\\
		&\quad+c\left(R^{1-\varrho}\int_{\ern\setminus Q_{2R}(x_0)}\dfrac{|u(y)-(u)_{2R}|^{q-1}}{|y-x_0|^{n+sq}}\,dy\right)^\frac{1}{m-1}
	\end{align*}
 holds for all $\varrho\in(0,m-qs)$ and $Q_{2R}(x_0)\subset\widetilde{\Omega}$, where $m:=\min\{p,q\}$ and $R\in (0,r_*/2)$.
\end{theorem}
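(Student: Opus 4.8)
The plan is to follow the maximal-function--free Calder\'on--Zygmund scheme of Acerbi--Mingione, adapted to the mixed local/nonlocal structure. First I would fix $Q_{2R}(x_0)\subset\widetilde{\Omega}$ with $R\in(0,r_*/2)$ and, after a standard scaling and normalization, reduce to the case where the relevant averages are of unit size. The starting point is the self-improving property proved in Section~3 (in its interior form): there is $\sigma_0=\sigma_0(\data)>0$ such that $|Du|\in L^{p(1+\sigma_0)}_{\loc}(\Omega)$ together with a quantitative reverse-H\"older inequality on sub-cylinders. This supplies both an exponent strictly above $p$ from which to bootstrap and the a priori finiteness needed to run the absorption argument later (one first proves a truncated version of the estimate, replacing $|Du|$ by $\min\{|Du|,k\}$, and then lets $k\to\infty$).

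The core is a three-step comparison on an arbitrary sub-cylinder $Q_{2\rho}(y)\subset Q_{2R}(x_0)$. Step (i): let $v$ solve the homogeneous mixed problem obtained from \eqref{eq:inter} by setting $F\equiv 0$ on $Q_{2\rho}(y)$ and $v=u$ outside; testing with $u-v$ and using \eqref{eq:A}, \eqref{eq:K} and the monotonicity of the nonlocal form yields $\mean{Q_{2\rho}(y)}|Du-Dv|^p\,dx\lesssim\mean{Q_{2\rho}(y)}|F|^p\,dx$ up to lower-order terms. Step (ii): freeze out the nonlocal operator and replace $A(x,\cdot)$ by its $x'$-average $\bar{A}_{B'_\rho(y')}(x_1,\cdot)$, letting $w$ solve $-\divo\big(\bar{A}_{B'_\rho(y')}(x_1,Dw)\big)=0$ on $Q_\rho(y)$ with $w=v$ on $\partial Q_\rho(y)$; the weak $(\delta,r_*)$-vanishing hypothesis \eqref{eq:BMO.l}--\eqref{eq:kappa}, together with the $L^\infty$ control $\mathcal{M}(u;\widetilde{\Omega})$ when $q>p$ and the (split) nonlocal tail of $v$, gives $\mean{Q_\rho(y)}|Dv-Dw|^p\,dx\lesssim \delta^{\beta}\,\mean{Q_{2\rho}(y)}|Dv|^p\,dx+(\text{data}+\text{tail})$ for some $\beta=\beta(\data)>0$. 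Step (iii): the limiting equation for $w$ has nonlinearity measurable in $x_1$ only, so the known local Lipschitz bound for such $p$-Laplacian--type equations yields $\|Dw\|_{L^\infty(Q_{\rho/2}(y))}^p\lesssim\mean{Q_\rho(y)}|Dw|^p\,dx$. Chaining the three steps produces the key excess-decay inequality on sub-cylinders, with a genuinely small coefficient in front of the $|Du|^p$-term (controlled by $\delta$) plus contributions from $|F|^p$ and the tail.

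Next I would run the exit-time argument on the super-level sets $E(\lambda)=\{x\in Q_R(x_0):|Du(x)|>\lambda\}$ for $\lambda$ above a threshold comparable to the averages over $Q_{2R}(x_0)$. For each Lebesgue point of $|Du|$ in $E(\lambda)$ one selects a maximal radius $r_x$ with $\mean{Q_{r_x}(x)}|Du|^p\,dx=\lambda^p$; a Vitali argument extracts a countable disjoint subfamily $\{Q_{r_i}(x_i)\}$ of exit cylinders. On each of them the comparison chain, combined with $\|Dw\|_{L^\infty}$-control, forces a nontrivial portion of $Q_{r_i}(x_i)$ to lie in $\{|F|>c\lambda\}$, or in $\{\text{tail}>c\lambda\}$, or in $E(c\lambda)$ with small density proportional to $\delta^{\beta}$; summing over $i$, multiplying by $\lambda^{\gamma-p-1}$ and integrating in $\lambda$ converts this into
\[
\||Du|\|_{L^\gamma(Q_R(x_0))}^\gamma\lesssim \delta^{\beta'}\||Du|\|_{L^\gamma(Q_{2R}(x_0))}^\gamma+(\text{terms in }|F|^\gamma+1\text{ and the tail}),
\]
and choosing $\delta$ small enough absorbs the first term on the right into the left side; a covering/iteration between $Q_R$ and $Q_{2R}$ then delivers the stated estimate for every $Q_{2R}(x_0)\subset\widetilde{\Omega}$, with constants depending only on $\data$, $\mathcal{M}(u;\widetilde{\Omega})$ and $\gamma$.

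The delicate point---and the step I expect to be the main obstacle---is the treatment of the nonlocal tail inside the exit-time scheme. The tails arising in Steps (i)--(ii) are centered at the stopping points $x_i$ and of radius $r_i$, and these radii are only defined implicitly; so I would decompose each tail $\int_{\ern\setminus Q_{2r_i}(x_i)}|u(y)-(u)_{2r_i}|^{q-1}|y-x_i|^{-n-sq}\,dy$ into the finite sum over dyadic annular cylinders $Q_{2^{k+1}r_i}(x_i)\setminus Q_{2^k r_i}(x_i)$ that remain inside $\widetilde{\Omega}$, plus one far tail reaching $\ern\setminus Q_{2R}(x_0)$. On the inner annuli a Poincar\'e-type inequality turns $|u(y)-(u)_{2^k r_i}|^{q-1}$ into gradient averages $\mean{Q_{2^k r_i}(x_i)}|Du|\,dx$ weighted by $(2^k r_i)^{q-1-sq}$; the restriction $\varrho\in(0,m-qs)$ (with $m=\min\{p,q\}$) is precisely what makes the resulting geometric series in $k$ summable while reproducing the scaling factor $R^{1-\varrho}$ and the exponent $1/(m-1)$ in the statement, and the stopping inequality $\mean{Q_{2^k r_i}(x_i)}|Du|^p\,dx\le\lambda^p$ keeps each annular contribution under control. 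The far tail is absorbed exactly into the tail term appearing in the statement. Keeping this decomposition compatible with the disjointness of the Vitali family, with the $L^\infty$-dependence when $q>p$, and with the absorption of the $\delta$-term, is the technical heart of the argument.
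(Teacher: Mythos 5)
Your overall blueprint (maximal-function--free exit-time scheme, Vitali covering on super-level sets, dyadic decomposition of the tail centered at stopping radii, truncation to legitimate the absorption) is the same as the paper's, but you organize the comparison chain differently. The paper's first comparison (cf.\ \eqref{eq:1st.eq.l} and Lemma \ref{lem:1st.l}) goes from $u$ directly to $w$ solving the \emph{purely local} homogeneous problem $-\divo(A(x,Dw))=0$ in $Q_{2R}$, so both the divergence data $F$ and the nonlocal form $\mathcal{L}$ are handled at once in that single estimate (this is where the dyadic annuli and the small parameter $\nu$ show up). The second comparison (the interior analogue of Lemma \ref{lem:2nd}) is then a comparison between two \emph{purely local} problems---$w$ against the averaged-coefficient solution $w_b$---and the key input is the higher-integrability estimate \eqref{eq:hi.w.b} for $Dw$ combined with the small BMO hypothesis. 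You instead first pass from $u$ to $v$ solving the mixed problem with $F\equiv 0$ (so the nonlocal form drops out by monotonicity, and the tail is deferred), and then jump from $v$ to the averaged local solution $w$, which simultaneously kills the nonlocal term and the coefficient oscillation. This alternative split is workable in principle, but it shifts the burden: your Step (ii) now requires a reverse-H\"older estimate for $Dv$ where $v$ solves the homogeneous mixed problem (rather than the purely local higher integrability \eqref{eq:hi.w.b} used by the paper), and the nonlocal bilinear form of $v$ surfaces as a source term in that step, so the tail decomposition has to be done there rather than in Step (i).

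The one place where your proposal actually breaks is Step (iii). You invoke a ``known local Lipschitz bound'' $\|Dw\|_{L^\infty(Q_{\rho/2})}^p\lesssim\mean{Q_\rho}|Dw|^p\,dx$ for the equation $-\divo\big(\bar{A}_{B'_\rho}(x_1,Dw)\big)=0$ whose nonlinearity is merely measurable in $x_1$. No such Lipschitz estimate is available under this hypothesis; the relevant regularity results (cited by the paper as \cite[Theorem 2.1]{KimY1} or \cite[Theorem 3.1]{BL1}, and used here via Lemma \ref{lem:ext.5}) give $|Dw|\in L^{\tilde\gamma}_{\loc}$ for every finite $\tilde\gamma$, with a constant that depends on $\tilde\gamma$, but not a uniform $L^\infty$ bound. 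This is precisely why the exit-time argument in Section~5 is set up with $[|Du|]_t^{\gamma-p}$-weights and a parameter $\tilde\gamma>\gamma$ (see \eqref{eq:CZ-B.vit.cov.4.1}), rather than with a pointwise comparison against an $L^\infty$-bounded reference gradient. Your scheme still closes once you replace the claimed Lipschitz bound by the $L^{p+\gamma}$ estimate of Lemma \ref{lem:ext.5}, so the gap is reparable, but as written Step (iii) asserts a result that is false for measurable-in-$x_1$ nonlinearities.
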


Note that Lemma \ref{lem:embed2}, $\mathbb{X}_{\loc}(\Omega)\hookrightarrow W^{s,q}_{\loc}(\Omega)$ holds for $s,p$ and $q$ satisfying \eqref{eq:s1pq} and \eqref{eq:sq<p}.

%Note that by Proposition \ref{prop:h.i.l} below, if $\gamma$ is close to $p$, then we do not need to assume that $A$ is $(\delta,R)$-vanishing of codimension 1.

\subsubsection{\textbf{Boundary case}}
With  $|F|\in L^p(\Omega)$ and $g\in W^{1,p}(\Omega)\cap W^{s,q}(\Omega')\cap L^{q-1}_{sq}(\ern)$ for some bounded open sets $\Omega\Subset\Omega'\subset\ern$, we next consider the following Dirichlet problem:
\begin{equation}\label{eq:bdary}
	\left\{ \begin{array}{llr}
		-\divo(A(x,Du))+&\mathcal{L}u =-\divo(|F|^{p-2}F)\quad\text{in}\quad\Omega,\\
		& \ \ u=g\quad\quad\quad\quad\quad\quad\quad\text{in}\quad \ern\setminus\Omega,
	\end{array}
	\right. %\tag{$\mathcal{G}$}\label{genPrb}
\end{equation}
where  the Carath\'{e}odory vector field $A(\cdot,\cdot)$ satisfies \eqref{eq:A} and $\mathcal{L}$ is defined in \eqref{eq:L} with the singular kernel $K(\cdot,\cdot)$ satisfying \eqref{eq:K}. For fixed $\Omega$ and $\Omega'$ as before,  we define the function space
\begin{align*}%\label{eq:Xg}
	\begin{split}
		&\mathbb{X}_{g}(\Omega;\Omega')\\
		&:=\left\{v:\ern\rightarrow\er: v\in W^{1,p}(\Omega)\cap W^{s,q}(\Omega')\cap L^{q-1}_{sq}(\ern), v-g\in W^{1,p}_0(\Omega), v=g\,\,\text{a.e. in}\,\,\ern\setminus\Omega\right\}.
	\end{split}
\end{align*}
We also write $\mathbb{X}_0(\Omega;\Omega')$ as the set $\mathbb{X}_{g}(\Omega;\Omega')$ with $g\equiv 0$. Note that %for ${g}\in W^{1,p}(\Omega)\cap W^{s,q}(\Omega')\cap L^{q-1}_{sq}(\ern)$, the space 
$\mathbb{X}_{{g}}(\Omega;\Omega')$ is non-empty since $g\in \mathbb{X}_{{g}}(\Omega;\Omega')$.\par 
We say that $u\in\mathbb{X}_{g}(\Omega;\Omega')$ is a weak solution of \eqref{eq:bdary}, if the following holds
\begin{align*}%\label{eq:weak.b}
	\begin{split}
		&\int_{\Omega}A(x,Du)\cdot D\vp\,dx+\int_{\ern}\int_{\ern}|u(x)-u(y)|^{q-2}(u(x)-u(y))(\vp(x)-\vp(y))K(x,y)\,dx\,dy\\
		&\quad=\int_{\Omega}|F|^{p-2}F\cdot D\vp\,dx
	\end{split}
\end{align*}
for all $\vp\in \mathbb{X}_{0}(\Omega;\Omega')$.

Then we state Reifenberg flatness condition on the domain as below.
\begin{definition}\label{def:reif}
 Let two parameters $\delta\in(0,\frac{1}{2^{n+1}})$ and $R>0$ be given. A bounded open set $\Omega$ is said to be $(\delta,R)$-Reifenberg flat if for each $x_0\in\partial\Omega$ and $r\in(0,R]$, there is a new coordinate system $\{x_1,\dots,x_n\}$ (which may depend on $r$ and $x_0$) such that $x_0=0$ in this new coordinate system and we have
\begin{align*}%\label{eq:reif.main}
		Q_r(0)\cap\{x_1>\delta r\}\subset \Omega\cap Q_r(0)\subset Q_r(0)\cap\{x_1>-\delta r\}.
	\end{align*}	
\end{definition}

Employing the above definition on $\Omega$ and Definition \ref{def:BMO.l} on the nonlinearity $A$, we introduce the following optimal smallness assumption on $A$ and $\Omega$.

\begin{definition}\label{def:BMO.b}
 For given parameters $\delta\in(0,\frac{1}{2^{n+1}})$ and $R>0$, we say that \textit{$(A,\Omega)$ is $(\delta,R)$-vanishing of codimension 1} if for any point $y\in\Omega$ and $r\in (0,R]$ with
    \begin{align*}
      \dist(y,\partial\Omega) =\min_{x_0\in\partial\Omega}\dist(y,x_0)>\sqrt{2}r,	
     \end{align*}
there exists a coordinate system depending on $y$ and $r$, whose variables are still denoted by $x =(x_1, x')\in\er\times\er^{n-1}$, so that in this coordinate system $y$ is the origin and there holds
	\begin{align*}
		\mean{Q_{r}(y)}\kappa(A,Q_r(y))(x)\,dx\leq\delta.
	\end{align*} 
 On the other hand, for any point $y\in\Omega$ and $r\in (0,R]$ with 
	\begin{align*}
		\dist(y,\partial\Omega)=\dist(y,x_0)\leq\sqrt{2}r
	\end{align*}
	for some $x_0\in\partial\Omega$, there exists a new coordinate system depending on $y$ and $r$, whose variables are still denoted by $x=(x_1,x')\in\er\times\er^{n-1}$, such that in this coordinate system $x_0$ is the origin, 
 \begin{align*}%\label{eq:reif.main}
		Q_r(0)\cap\{x_1>\delta r\}\subset \Omega\cap Q_r(0)\subset Q_r(0)\cap\{x_1>-\delta r\}
	\end{align*}
 and
	\begin{align*}
		\mean{Q_r(x_0)}\kappa(A,Q_r(x_0))(x)\,dx\leq\delta
	\end{align*}
	hold.
\end{definition}

% {\color{red}and so \eqref{eq:kappa.2L} also holds.} 
% {\color{magenta} Is this phrase really needed? We said that \eqref{eq:kappa.2L} is not a conclusion of $A$ being $(\delta,R)$-vanishing of codimension 1, so the phrase can mislead some people.}
% {\color{blue}I agree with you. We can remove this.}

Then in conjunction with $\mathbb{X}$, $\data$ and $\mathcal{M}$ given by \eqref{eq:X.l} and \eqref{eq:data}, we are ready to state the main theorem in the boundary case.

\begin{theorem}\label{thm:b}
Let $u\in \mathbb{X}(\Omega)\cap\mathbb{X}_{g}(\Omega;\Omega')$ be a weak solution to problem \eqref{eq:bdary} under the assumptions \eqref{eq:s1pq}, \eqref{eq:sq<p}, \eqref{eq:A}, \eqref{eq:K}, and $g\in \mathbb{X}(\Omega')$. Assume that for $\gamma>p$, $|F|\in L^{\gamma}(\Omega)$ and $|Dg|\in L^\gamma(\Omega')$. Then there exist constants $\delta$ and $r_*$ both depending only on $\data$, $\mathcal{M}(u;\Omega)$, $\mathcal{M}(g;\Omega')$ and $\gamma$ such that if $(A,\Omega)$ is $(\delta,r_*)$-vanishing of codimension 1, then we have $|Du|\in L^{\gamma}(\Omega_{R/2}(x_0))$, for any $x_0\in \overline{\Omega}$ and $R\in (0,r_*/2)$ satisfying $Q_{R}(x_0)\Subset\Omega'$. Moreover, there exists a constant $c=c(\data,\mathcal{M}(u;\Omega),\mathcal{M}(g;\Omega'),\gamma)$ such that
	\begin{align*}
		\left(\mean{\Omega_\frac{R}{2}(x_0)}|Du|^{\gamma}\,dx\right)^\frac{1}{\gamma}
		&\leq c \left(\mean{\Omega_{R}(x_0)}|Du|^{p}\,dx\right)^{\frac{1}{p}}+c\left(\mean{\Omega_R(x_0)}(|F|^\gamma+1)\,dx\right)^\frac{1}{\gamma}
			\nonumber\\
		& +c\left(\mean{Q_{R}(x_0)}|Dg|^{\gamma}\,dx\right)^\frac{1}{\gamma}
		+c \left(R^{1-\varrho}\int_{\ern\setminus Q_{R}(x_0)}\dfrac{|u(y)-(u)_{R}|^{q-1}}{|y-x_0|^{n+sq}}\,dy\right)^\frac{1}{m-1}
		\end{align*}	
holds for all $\varrho\in(0,m-qs)$ with $m:=\min\{p,q\}$.
\end{theorem}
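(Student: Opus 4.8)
The plan is to reduce the boundary estimate to a family of comparison estimates on half-cylinders via the maximal-function-free iteration of \cite{AM1,col-ming-JFA}, in complete parallel with the interior Theorem \ref{thm:l} but with the extra care required near $\partial\Omega$. First I would fix $x_0\in\overline\Omega$ and $R\in(0,r_*/2)$ with $Q_R(x_0)\Subset\Omega'$, and reduce to two regimes according to whether $Q_R(x_0)$ is well inside $\Omega$ (in which case Theorem \ref{thm:l}, or rather its proof, applies verbatim) or touches the boundary, say $\dist(x_0,\partial\Omega)\le\sqrt2\,R$; in the latter regime the $(\delta,r_*)$-vanishing hypothesis on $(A,\Omega)$ gives a coordinate system in which $\partial\Omega\cap Q_R$ is trapped between the hyperplanes $\{x_1=\pm\delta r\}$ and $\mean{Q_r}\kappa(A,Q_r)\le\delta$ simultaneously. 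As in the interior case one starts from the global self-improving (small higher integrability) property of $u$ established in Section 3, which upgrades $|Du|\in L^p$ to $|Du|\in L^{p(1+\sigma_0)}$ near the boundary, with the nonlocal tail controlled; this is what makes the subsequent exit-time argument run.

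The core is a two-step comparison on a boundary cylinder $\Omega_{2\rho}(z)$ with $z\in\partial\Omega$ (after flattening). Step one: compare $u$ with the solution $v$ of the homogeneous mixed problem with the same boundary/exterior data on $\Omega_{2\rho}(z)$ — here one freezes neither the kernel nor $A$, and the difference is controlled by $\big(\mean{}|F|^p\big)^{1/p}$ plus a tail remainder, using the monotonicity of the operator and the Sobolev/tail estimates from Section 2–3; crucially the nonlocal tail term $\int_{\ern\setminus Q_{2\rho}}|u-(u)_{2\rho}|^{q-1}|y-z|^{-n-sq}\,dy$ must be split over annuli $Q_{2^{k+1}\rho}\setminus Q_{2^k\rho}$ and each piece estimated by the gradient of $u$ (inside $\Omega$) and by $Dg$ (outside), exactly the technical device flagged in the overview. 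Step two: freeze the nonlinearity — replace $v$ by the solution $w$ of a limiting equation with $x_1$-dependent-only, $x'$-averaged nonlinearity $\bar A_{B'}$ on a half-cylinder, absorbing the error by the smallness $\delta$ of the BMO seminorm and of the Reifenberg constant; here one invokes the known boundary Lipschitz / $W^{1,\infty}$ regularity for the frozen local $p$-growth problem on a half-cylinder (the nonlocal term now being a bounded lower-order perturbation since $p>sq$), giving $\|Dw\|_{L^\infty(\Omega_{\rho}(z))}$ bounded by the $L^p$-average of $Dw$, hence of $Du$, plus the data terms. Combining, one obtains the decay/excess estimate
\begin{align*}
\mean{\Omega_\rho(z)}|Du-(Du)_{\Omega_\rho}|^p\,dx \le \varepsilon\,\mean{\Omega_{2\rho}(z)}|Du|^p\,dx + c_\varepsilon\Big(\mean{}(|F|^p+|Dg|^p+1) + \text{tail}\Big),
\end{align*}
valid for all small $\rho$, with $\varepsilon=\varepsilon(\delta)\to0$.

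Then I would set up the exit-time (stopping-time) argument on super-level sets of $M^\sharp(|Du|^p)$ relativized to $\Omega_{R/2}(x_0)$: choosing a threshold $\lambda$ above a fixed multiple of the right-hand side, for each point of the super-level set one finds a maximal exit-time cylinder; on it the excess estimate above holds, while the tail over the exit cylinder is reabsorbed by splitting it into finitely many annular pieces of radius comparable to the exit radius — some controlled by $|Du|$ (from the self-improving bound), the terminal ones by $|Dg|$ and by $|u-(u)_R|$ over $\ern\setminus Q_R(x_0)$. A Vitali-type covering plus the Calderón–Zygmund–type measure-density inequality of \cite{AM1} yields a good-$\lambda$ inequality, and integrating $\lambda^{\gamma/p-1}$ against it over $\lambda>\lambda_0$ gives $|Du|\in L^\gamma(\Omega_{R/2}(x_0))$ together with the stated quantitative bound; the appearance of the exponent $\tfrac1{m-1}$ on the tail, $m=\min\{p,q\}$, comes from the homogeneity mismatch between the $p$-growth local part and the $q$-growth nonlocal part, tracked through the comparison.

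The main obstacle, as anticipated in the overview, is controlling the nonlocal tail in the boundary exit-time scheme: unlike the interior case, the exterior of a boundary exit-cylinder meets both $\Omega$ (where only the self-improving $L^{p(1+\sigma_0)}$ bound on $Du$ is available and must be interpolated against the $L^p$ excess) and $\ern\setminus\Omega$ (where $u=g$, so one needs $|Dg|\in L^\gamma$ and the fractional regularity of $g$); making the annular decomposition uniform in the exit radius, and ensuring the number of annuli needed stays bounded by $\log(R/\rho_{\text{exit}})$ with summable weights $2^{-k(\text{something positive})}$ so that the series converges for every $\varrho\in(0,m-qs)$, is the delicate quantitative heart of the proof. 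Everything else — the freezing lemmas, the half-cylinder Lipschitz bound, the covering and good-$\lambda$ machinery — is by now standard once Section 3's self-improving property is in hand.
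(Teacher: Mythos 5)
You have the overall architecture right---self-improving property from Section~3, a boundary comparison chain, exit-time/Vitali/good-$\lambda$ iteration---but two steps are glossed over in a way that, as written, would not go through. First, your claim that the frozen local $p$-growth problem on the half-cylinder has $W^{1,\infty}$ regularity (``$\|Dw\|_{L^\infty(\Omega_\rho(z))}$ bounded by the $L^p$-average of $Dw$'') is not in the literature for a nonlinearity $\bar A_{B'_R}(x_1,z)$ that is merely measurable in $x_1$, and it is not what the paper uses: the paper proves only the $L^{\tilde\gamma}$ estimate for every finite $\tilde\gamma$, via an odd reflection across $\{x_1=0\}$ (Lemma~\ref{lem:ext.5}, citing \cite{KimY1,BL1}). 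That weaker statement is exactly what the level-set argument needs, so the fix is easy, but as stated your reference regularity is an unjustified assertion.

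Second, you collapse the paper's three-step comparison chain ($u\to h_b$, $h_b\to w_b$, $w_b\to v_b$) into two steps and dispose of the boundary flattening $\Omega_\rho\to Q_\rho^+$ by ``absorbing the error by the smallness $\delta$ of the BMO seminorm and of the Reifenberg constant.'' That absorption is not automatic: the transfer from a Reifenberg-flat $\Omega_\rho$ to the half-cylinder $Q_\rho^+$ is precisely the content of a compactness/contradiction argument (Lemma~\ref{lem:3rd}) followed by an energy comparison with the zero extension $\bar v$ (Lemma~\ref{lem:4th}), and without some version of that machinery your claimed excess-decay estimate has no proof. Relatedly, you keep the nonlocal operator in the first reference solution $v$ (the ``homogeneous mixed problem''), whereas the paper removes $\mathcal L$ immediately in the first step (Lemma~\ref{lem:1st.bdry}: $u\to h_b$ solves a \emph{purely local} equation), so that all later reference problems are local and the tail/annuli-splitting is done exactly once. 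If you retain $\mathcal Lv$, you must still eliminate it before freezing and flattening, and now the tail of $v$ (rather than of $u$) enters that step; you then need to redo the annular decomposition and control $v$'s tail via $u$'s, with no evident gain and a real risk of circularity in how you propagate the exit-time bound through the tail sum.
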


by Proposition \ref{prop:h.i.b} below, if $\gamma$ is close to $p$, then we do not need to assume any vanishing condition on the vector field $A$.

\begin{remark}
 We remark that our preceding main theorem in the boundary case remains valid under the local assumption on the exterior data $g$ with respect to the domain $\Omega'$, namely $g\in\mathbb{X}_{\loc}(\Omega')$ and $|Dg|\in L^{\gamma}_{\loc}(\Omega')$, for $\gamma>p$. Since we can find $\Omega\Subset\Omega''\Subset\Omega'$, so that the aforementioned assumptions are valid globally in $\Omega''$, we choose to label $\Omega''$ as $\Omega'$ itself to avoid introducing several domains.
\end{remark}

\begin{remark}\label{rem:def-ming-mix}
We point out that our global gradient estimate of Theorem \ref{thm:b} together with Theorem \ref{thm:l} complements the global almost Lipschitz regularity of \cite[Theorem 4]{defilip-ming-mixed} in the case of less regular domains and more general nonlinearities. For instance, let us assume that $g\in W_{\loc}^{1,\infty}(\Omega')$, for some $\Omega'\Supset\Omega$ with $\Omega$ being a bounded Lipschitz domain. Then for given $f\in L^n(\Omega)$, let $u\in \mathbb{X}(\Omega)\cap\mathbb{X}_{g}(\Omega;\Omega')$ be the weak solution to the exterior data problem \eqref{eq:bdary} with right-hand side $f$ in place of $-{\rm div}(|F|^{p-2}F)$. By the standard regularity (and existence) theory, there exists $F:\Omega\rightarrow\ern$ satisfying $-{\rm div} (|F|^{p-2}F)=f$ in $\Omega$ and $|F|\in L^{\gamma}(\Omega)$ for any $\gamma>1$. Thus from Theorem \ref{thm:l} and Theorem \ref{thm:b}, $|Du|\in L^\gamma(\Omega)$ for each $\gamma>p$ provided $(A,\Omega)$ is $(\delta,R)$-vanishing of codimension 1 with sufficiently small $\delta\in(0,1)$ and $R>0$ depending on $\gamma$. Moreover, by \cite[Proposition 2.1]{defilip-ming-mixed}, we have $u\in L^\infty(\Omega)$. Consequently, we get that $u\in C^{\alpha}(\Omega)$ for each $\alpha<1$ by the Sobolev embedding theorem, provided $\delta$ and $R$ are sufficiently small depending on $\alpha$.
\end{remark}

%that $(A,\Omega)$ is $(\delta,R)$-vanishing of codimension 1.

The rest of the paper is organized as follows. In Section 2, we provide some additional basic notations, function spaces and preliminaries which will be used in the subsequent part of the paper. We also prove the existence result in this section. In Section 3, we prove a small higher integrability result for the gradient of weak solutions. Section 4 is devoted to showing the local estimates on the gradient which is Theorem \ref{thm:l}. Finally, in Section 5, we prove the boundary gradient estimate Theorem \ref{thm:b}.

\section{Preliminaries}%\label{sec:2}
The notation $B_r(x_0)$ means the open ball in $\ern$ centered at $x_0\in\ern$ with radius $r>0$. We sometimes abbreviate $B_r(x_0)$ with $B_r$ when its center is not important in the context. Furthermore, for a point $y\in(y_1,y')\in\er\times\er^{n-1}$ and a radius $r>0$, we write
	\begin{itemize}
		\item $B'_r(y')=\{x'=(x_2,\dots,x_n)\in\er^{n-1}:|x'-y'|<r\}$.
		\item $Q_r(y)=(y_1-r,y_1+r)\times B'_r(y')$.
		\item $Q^+_r(0)=Q_r(0)\cap\{x:x_1>0\}$, $Q^+_r(y)=Q^+_r(0)+y$.
		\item $\Omega_{r}(y)=Q_{r}(y)\cap\Omega$, $\partial_{w}\Omega_{r}(y)=\partial\Omega\cap Q_{r}(y)$.
		\item $T_{r}(0)=Q_r(0)\cap\{x:x_1=0\}$, $T_{r}(y)=T_r(0)+y$.
	\end{itemize}
We omit the center if it is clear from the context.

Generic positive constants are denoted as $c$ and their value may differ from line to line. We often write the dependence of $c$ in the parenthesis as $c=c(\data)$. Sometimes to mention additional dependencies of the constant, we may omit the dependency on $\data$.
For $X,Y\in \mathbb{R}$, we say $X\lesssim Y$, if there exists a positive constant $c$ such that $X\leq c Y$. Moreover, by $X\simeq Y$, we mean that $X\lesssim Y$ and $Y\lesssim X$ hold. For a measurable function $v$ and a measurable set $S$ with finite measure, the integral average over $S$ is denoted by
\begin{align*}
	(v)_S=\mean{S}v\,dx=\dfrac{1}{|S|}\int_{S}v\,dx.
\end{align*} 
In particular, if $S=Q_{r}$ for some $Q_r\subset\ern$, then we simply denote $(v)_r=(v)_{Q_r}$.	

For $0<s<1\leq p<\infty$ and an open set $U\subset\ern$, the fractional Sobolev space $W^{s,p}(U)$ is defined as below:
\begin{align*}
	W^{s,p}(U):=\{ v:U\rightarrow\er \ : \ \|v\|_{W^{s,p}(U)}<\infty\}  
\end{align*} 
equipped with the norm $\|\cdot\|_{W^{s,p}(U)}$, where 
\begin{align*}
	\|v\|_{W^{s,p}(U)}=\|v\|_{L^p(U)}+[v]_{W^{s,p}(U)}:=\left(\int_{U}|v|^p\,dx\right)^{\frac{1}{p}}+\left(\int_{U}\int_{U}\dfrac{|v(x)-v(y)|^p}{|x-y|^{n+sp}}\,dxdy\right)^{\frac{1}{p}}.
\end{align*}
We denote the closure of $C^{\infty}_{0}(U)$ in $W^{s,p}(U)$ as $W^{s,p}_0(U)$. Also, $v\in W^{s,p}_{\loc}(U)$ means that $v\in W^{s,p}(\tilde{U})$ for all $\tilde{U}\Subset U$. On the other hand, for $0<\ell,\alpha<\infty$, we define the tail space as 
\begin{align*}
   L^{\ell}_{\alpha}(\mathbb R^n) = \bigg\{ u:\mathbb{R}^{n}\to \mathbb{R}\text{ is a measurable function } : \int_{\mathbb R^n} \frac{|u(y)|^{\ell}}{(1+|y|)^{n+\alpha}}\,dy <\infty \bigg\}.
\end{align*}
\par

For $\ell\in[1,\infty)$, we denote the critical Sobolev exponent $\ell^*$ %and the fractional Sobolev index $\mu^*_s$ 
by
\begin{align*}
	\ell^*:=
	\begin{cases}
		\dfrac{\ell n}{n-\ell}&\text{if}\,\,\ell<n\\
		\text{any number in }(\ell,\infty)&\text{if}\,\,\ell\geq n.
	\end{cases}
\end{align*}
We also define the constant $\theta\in (s,1)$ as
\begin{align}\label{eq:theta1}
		\theta:=
	\begin{cases}
		1 &\text{if}\,\, q\leq p,\\
		\frac{p}{q} &\text{if}\,\, p<q<\frac{p}{s}
	\end{cases}
\end{align} 
  and fix $\tau\in(0,1)$ satisfying (thanks to \eqref{eq:sq<p})
	\begin{align}\label{eq:tau}
		\max\left\{s,\dfrac{q-1}{q}\theta\right\}<\tau<\theta.
	\end{align} 
Now we record some useful embedding results as below. 
\begin{lemma}[Sobolev-Poincar\'{e} inequalilty]\label{lem:SP}
 Let $1< \ell<\infty$ and $r>0$. Then, for any $v\in W^{1,\ell}(Q_r)$, we have
    \begin{align*}
      \left(\mean{Q_{r}}\left|\dfrac{v(x)-(v)_{r}}{r}\right|^{\ell^*}\,dx\right)^{\frac{1}{\ell^*}}\leq c\left(\mean{Q_{r}}|Dv|^{\ell}\,dx\right)^{\frac{1}{\ell}},
    \end{align*}
  where $c=c(n,\ell)$ is a constant.	
\end{lemma}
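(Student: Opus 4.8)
The final statement is the Sobolev--Poincar\'e inequality (Lemma \ref{lem:SP}), a standard result. Here is how I would prove it.

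\medskip

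\textbf{Proof plan.} The plan is to reduce the estimate on the cube $Q_r$ to the classical Sobolev--Poincar\'e inequality on the unit cube $Q_1$ via a scaling and translation argument, and to handle the two ranges $\ell<n$ and $\ell\geq n$ by citing the appropriate embedding theorem. First I would record the classical statement on $Q_1 = Q_1(0)$: for $v\in W^{1,\ell}(Q_1)$ one has $\|v-(v)_{Q_1}\|_{L^{\ell^*}(Q_1)}\leq c(n,\ell)\|Dv\|_{L^\ell(Q_1)}$. When $\ell<n$ this is the Sobolev--Poincar\'e inequality with the sharp exponent $\ell^*=\frac{\ell n}{n-\ell}$ on the bounded Lipschitz (in fact convex) domain $Q_1$; when $\ell\geq n$, $W^{1,\ell}(Q_1)$ embeds into $L^t(Q_1)$ for every finite $t$ by the Morrey/Sobolev embedding (using that $Q_1$ has finite measure), so the same inequality holds with $\ell^*$ any chosen number in $(\ell,\infty)$, again with a constant depending only on $n,\ell$ (and, in the case $\ell\ge n$, on the chosen value of $\ell^*$, which is fixed once and for all).

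\medskip

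Next I would perform the change of variables. Given $Q_r = Q_r(x_0)$, set $w(y) := v(x_0 + r y)$ for $y\in Q_1(0)$. Then $w\in W^{1,\ell}(Q_1)$ with $Dw(y) = r\, Dv(x_0+ry)$, and by the substitution $x = x_0 + r y$ one has the change-of-variables identities
\begin{align*}
\mean{Q_1} |w - (w)_{Q_1}|^{\ell^*}\,dy = \mean{Q_r}\left|v - (v)_{Q_r}\right|^{\ell^*}\,dx,
\qquad
\mean{Q_1}|Dw|^\ell\,dy = r^\ell \mean{Q_r}|Dv|^\ell\,dx,
\end{align*}
where we used that averages are invariant under the affine change of variables and that $(w)_{Q_1} = (v)_{Q_r}$. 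Applying the classical inequality to $w$ on $Q_1$ and inserting these identities gives
\begin{align*}
\left(\mean{Q_r}\left|v-(v)_{Q_r}\right|^{\ell^*}\,dx\right)^{1/\ell^*}
\leq c(n,\ell)\, r\left(\mean{Q_r}|Dv|^\ell\,dx\right)^{1/\ell},
\end{align*}
which is exactly the claimed estimate after dividing through by $r$ inside the left-hand integrand (i.e. replacing $v-(v)_r$ by $(v-(v)_r)/r$), since $\||v-(v)_r|/r\|_{L^{\ell^*}} = r^{-1}\|v-(v)_r\|_{L^{\ell^*}}$.

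\medskip

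\textbf{Main obstacle.} There is no serious obstacle here; the only point requiring a little care is making sure the constant in the scaled inequality is genuinely independent of $r$ and of the center $x_0$, which is guaranteed because all the integrals are averaged (normalized by measure) so that the scaling factors combine to leave precisely one power of $r$, matching the $r^{-1}$ on the left. A secondary point worth stating explicitly is that in the borderline and supercritical range $\ell\geq n$ the exponent $\ell^*$ is not canonical; one fixes an arbitrary finite $\ell^* > \ell$ and the constant $c$ is then allowed to depend on that choice, consistent with the definition of $\ell^*$ given above. For the core case $\ell < n$ one may alternatively give a self-contained proof by combining the Sobolev inequality $\|\tilde v\|_{L^{\ell^*}}\leq c\|D\tilde v\|_{L^\ell}$ for $\tilde v\in W^{1,\ell}_0$ with an extension operator for the cube, or by the standard potential-estimate argument (representing $v-(v)_{Q_r}$ via the Riesz potential of $|Dv|$ and applying the Hardy--Littlewood--Sobolev inequality), but invoking the classical Sobolev--Poincar\'e inequality on a fixed cube is the cleanest route.
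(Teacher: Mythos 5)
Your proof is correct. The paper states Lemma~\ref{lem:SP} without proof, treating it as a classical fact, and your argument — reducing to the unit cube by an affine change of variables and invoking the standard Sobolev--Poincar\'e inequality there, with the caveat that in the regime $\ell\geq n$ the exponent $\ell^*$ is fixed arbitrarily and the constant may depend on that choice — is precisely the standard route the paper implicitly relies on; the scaling bookkeeping (averaging absorbs the Jacobians, leaving a single factor of $r$ that cancels the $r^{-1}$ on the left) is carried out correctly.
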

\begin{corollary}\label{cor:emb1}
 Assume that $0<s<1<p,q<\infty$ and $p>sq$ hold. 
 Let $w\in\mathbb{X}(Q_r)$, where the space $\mathbb{X}$ is as defined in \eqref{eq:X.l} and $r>0$. 
 Then for every $\frac{n}{n+p}\leq \sigma\leq 1$, we have 
	\begin{align*}
		\left(\mean{Q_{r}}|w(x)-(w)_{r}|^q\,dx\right)^{\frac{1}{q}}
		\leq c \, r^{\theta} \|w\|^{1-\theta}_{L^{\infty}(Q_{r})}\left(\mean{Q_{r}}|Dw|^{p\sigma}\,dx\right)^{\frac{\theta}{p\sigma}}
	\end{align*}
	for some $c=c(n,p,q)$.
\end{corollary}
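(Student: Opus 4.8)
The plan is to reduce the claimed inequality to the Sobolev--Poincar\'e inequality of Lemma \ref{lem:SP} together with an interpolation between $L^q$ and $L^\infty$. The proof splits naturally according to the definition \eqref{eq:theta1} of $\theta$: the case $q\le p$ (where $\theta=1$ and $\mathbb{X}(Q_r)=W^{1,p}(Q_r)$), and the case $p<q<p/s$ (where $\theta=p/q<1$ and $\mathbb{X}(Q_r)=W^{1,p}(Q_r)\cap L^\infty(Q_r)$).

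First I would treat the case $q\le p$. Here the right-hand side has no $L^\infty$ factor (the exponent $1-\theta$ is $0$), so the claim is just
\begin{align*}
\left(\mean{Q_r}|w-(w)_r|^q\,dx\right)^{1/q}\le c\, r\left(\mean{Q_r}|Dw|^{p\sigma}\,dx\right)^{1/(p\sigma)}.
\end{align*}
Since $p\sigma\ge p\cdot\frac{n}{n+p}=\frac{np}{n+p}$, one checks that $(p\sigma)^*\ge p\ge q$ (using that $(p\sigma)^*=\frac{np\sigma}{n-p\sigma}$ is increasing in $\sigma$ and equals $p$ at $\sigma=\frac{n}{n+p}$), so by H\"older's inequality on $Q_r$ the $L^q$ average is bounded by the $L^{(p\sigma)^*}$ average, and then Lemma \ref{lem:SP} with $\ell=p\sigma$ finishes this case (if $p\sigma\ge n$ one simply picks the free exponent in $\ell^*$ to be $\ge q$).

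Next, for $p<q<p/s$, I would interpolate: writing $v:=w-(w)_r$, we have $\|v\|_{L^q(Q_r)}\le \|v\|_{L^{p^*}(Q_r)}^{\alpha}\,\|v\|_{L^\infty(Q_r)}^{1-\alpha}$ for the appropriate $\alpha\in(0,1)$, after normalizing by $|Q_r|$ to pass to averages. The exponent matching forces $\alpha=p/q=\theta$ when interpolating between $L^{p^*}$-type control and $L^\infty$ (one should double-check the arithmetic: $\frac1q=\frac{\alpha}{\ell^*}+\frac{1-\alpha}{\infty}$ style relations on averages, yielding $\alpha\ell^*=q$, combined with Lemma \ref{lem:SP} giving $r^{-\ell}\|v/r\|$ bounds); this is exactly where the hypothesis $q<p/s$ and $p>sq$ enter, guaranteeing $\theta\in(s,1)$ and that the interpolation is admissible. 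Then $\|v\|_{L^\infty(Q_r)}\le \|w\|_{L^\infty(Q_r)}+|(w)_r|\le 2\|w\|_{L^\infty(Q_r)}$, producing the factor $\|w\|_{L^\infty(Q_r)}^{1-\theta}$, while Lemma \ref{lem:SP} applied to the $L^{(p\sigma)^*}$ part (again using $(p\sigma)^*\ge p^*$) supplies $r^\theta\big(\mean{Q_r}|Dw|^{p\sigma}\big)^{\theta/(p\sigma)}$. Combining gives the asserted estimate.

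The main obstacle I anticipate is purely bookkeeping rather than conceptual: carefully tracking the powers of $r$ and of $|Q_r|$ when converting between genuine $L^q$ norms and normalized averages $\mean{Q_r}$, so that the final power of $r$ comes out to be exactly $\theta$ and not something else, and verifying that the range $\frac{n}{n+p}\le\sigma\le 1$ is exactly what makes $(p\sigma)^*\ge q$ (resp. $\ge p^*$) in both cases, including the borderline subcase $p\sigma\ge n$ where $\ell^*$ may be chosen freely. A secondary point is ensuring the interpolation inequality is stated for averages with the correct constant depending only on $n,p,q$, which follows since $\theta=p/q$ is determined by $n,p,q$ alone.
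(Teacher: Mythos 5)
Your case $q\le p$ is correct and matches the paper's argument: since $(p\sigma)^*\ge p\ge q$ for $\sigma\ge\frac{n}{n+p}$, H\"older plus Lemma \ref{lem:SP} with $\ell=p\sigma$ finishes.

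The case $p<q<p/s$ contains a genuine error: you propose interpolating $L^q$ between $L^{p^*}$ (more precisely $L^{(p\sigma)^*}$) and $L^\infty$, but this does not produce the exponent $\theta=p/q$. The elementary interpolation
\[
\left(\mean{Q_r}|v|^q\right)^{1/q}\le \|v\|_{L^\infty}^{1-m/q}\left[\left(\mean{Q_r}|v|^m\right)^{1/m}\right]^{m/q}
\]
carries exponent $m/q$ on the $L^m$ average. To hit $\theta=p/q$ you must take $m=p$, not $m=p^*$ nor $m=(p\sigma)^*$. If you instead use $m=(p\sigma)^*$, the power of $r$ you get after Lemma \ref{lem:SP} is $(p\sigma)^*/q$, which equals $\theta$ only at the endpoint $\sigma=n/(n+p)$; for general $\sigma$ it is strictly larger, so the stated conclusion fails. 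Worse, the interpolation inequality requires $m\le q$, and $(p\sigma)^*\le q$ need not hold (it fails e.g.\ when $p\ge n(1-s)$, which is allowed by $p>sq$). Your side remark that $(p\sigma)^*\ge p^*$ is also reversed: since $(p\sigma)^*$ is increasing in $\sigma$ and $\sigma\le 1$, you have $p\le(p\sigma)^*\le p^*$, with equality $(p\sigma)^*=p$ precisely at $\sigma=n/(n+p)$.

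The correction is exactly the paper's route: for $q>p$, first interpolate $L^q$ between $L^p$ and $L^\infty$ on averages to get $\alpha=p/q=\theta$, and only then apply Lemma \ref{lem:SP} (with $\ell=p\sigma$, using $(p\sigma)^*\ge p$) to the resulting $L^p$ average, giving $r^\theta\big(\mean{Q_r}|Dw|^{p\sigma}\big)^{\theta/(p\sigma)}$. The passage from $\|w-(w)_r\|_{L^\infty}$ to $\|w\|_{L^\infty}$ via $|(w)_r|\le\|w\|_{L^\infty}$, which you already noted, then completes the argument.
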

\begin{proof}
We first observe by the Sobolev-Poincar\'{e} inequality that for all $\frac{n}{n+p}\leq \sigma\leq 1$, there holds
\begin{align}\label{eq:emb.4}
 \left(\mean{Q_{r}}|w(x)-(w)_{r}|^p\,dx\right)^{\frac{1}{p}} \leq c \, r\left(\mean{Q_{r}}|Dw|^{p\sigma}\,dx\right)^{\frac{1}{p\sigma}}.
\end{align}
For the case, $q\leq p$, the required result of the Corollary follows by \eqref{eq:emb.4} and H\"older's inequality. Moreover, for the case $q>p$, noting that $w\in L^\infty(Q_r)$, we have
\begin{align*}
   \left(\mean{Q_{r}}|w(x)-(w)_{r}|^q\,dx\right)^{\frac{1}{q}} \leq c \, \|w\|^{1-p/q}_{L^{\infty}(Q_{r})} \left(\Big(\mean{Q_{r}}|w(x)-(w)_{r}|^p\,dx\Big)^{\frac{1}{p}}\right)^\frac{p}{q}, 
\end{align*}
which yields the result of the Corollary by using \eqref{eq:emb.4} for $\theta=p/q$.
\end{proof}

\begin{lemma}\label{lem:embed2}
 Assume that the numbers $0<s<1<p,q<\infty$ are such that $p>sq$.
% Let $p,q$ and $s$ are numbers satisfying \eqref{eq:s1pq} and \eqref{eq:sq<p}. 
 Let $w\in\mathbb{X}(Q_r)$ for $r>0$, 
 then $w\in W^{s,q}(Q_{r})$ and the following holds
	\begin{align*}
		\left(\mean{Q_{r}}\int_{Q_{r}}\dfrac{|w(x)-w(y)|^q}{|x-y|^{n+sq}}\,dx\,dy\right)^{\frac{1}{q}} 
		\leq c\, r^{\theta-s} \|w\|^{1-\theta}_{L^{\infty}(Q_{r})}\left(\mean{Q_{r}}|Dw|^{p}\,dx\right)^{\frac{\theta}{p}}
	\end{align*}
	for $c=c(n,s,p,q)$.
\end{lemma}
\begin{proof}
 We first consider the case $r=1$. From \cite[Proposition 2.2]{DPV}, for $0<t<1\leq\ell<\infty$ we have
\begin{align*}
\|v\|_{W^{t,\ell}(Q_1)}\leq c \|v\|_{W^{1,\ell}(Q_1)},
\end{align*}
where $c=c(n,\ell,t)$. Set $v(x)=w(x)-(w)_{Q_1}$ to obtain
\begin{align*}
\mean{Q_1}\int_{Q_1}\frac{|w(x)-w(y)|^\ell}{|x-y|^{n+\ell t}}\,dx\,dy \leq c \mean{Q_1}|w(x)-(w)_{Q_1}|^\ell \,dx + c\mean{Q_1}|Dw(x)|^\ell \,dx.
\end{align*}
 After rescaling and using the Poincar\'e inequality, it follows that
\begin{align}\label{eq:emb.1}
\mean{Q_r}\int_{Q_r}\frac{|w(x)-w(y)|^\ell}{|x-y|^{n+\ell t}}\,dx\,dy &\leq c r^{-\ell t}\mean{Q_r}|w(x)-(w)_{Q_r}|^\ell \,dx + c r^{\ell-\ell t} \mean{Q_r}|Dw(x)|^\ell \,dx \nonumber\\
&\leq c r^{\ell-\ell t} \mean{Q_r}|Dw(x)|^\ell \,dx.
\end{align}
In the case of $q\leq p$, taking $(\ell,t)=(q,s)$, using H\"{o}lder's inequality and recalling the definition of $\theta$ yield the result of the lemma. In the case of $q>p$, since $w\in L^\infty(Q_r)$, we deduce that 
	\begin{align}\label{eq:emb.2}
		\mean{Q_r}\int_{Q_r}\frac{|w(x)-w(y)|^q}{|x-y|^{n+q s}}\,dx\,dy &\leq c \|w\|^{q-p}_{L^\infty(Q_r)} \mean{Q_r}\int_{Q_r}\frac{|w(x)-w(y)|^p}{|x-y|^{n+p\frac{sq}{p}}}\,dx\,dy \nonumber\\
		&\leq c r^{p-sq} \|w\|^{q-p}_{L^\infty(Q_r)}\mean{Q_r}|Dw(x)|^p \,dx,
	\end{align}
 where on the last line we used \eqref{eq:emb.1} for the choice $(\ell,t)=(p,sq/p)$. Combining \eqref{eq:emb.1} and \eqref{eq:emb.2}, and again recalling \eqref{eq:theta1}, we conclude the proof.
\end{proof}

Now we prove the following existence result.
\begin{theorem}
 Suppose that \eqref{eq:s1pq}, \eqref{eq:A} and \eqref{eq:K} hold. Let $\Omega\Subset\Omega'\subset\ern$ be two bounded domains and let  $g\in W^{1,p}(\Omega)\cap W^{s,q}(\Omega')\cap L^{q-1}_{sq}(\ern)$. If $|F|\in L^p(\Omega)$, then there exists a unique solution $u\in \mathbb{X}_{g}(\Omega;\Omega')$ of the problem \eqref{eq:bdary}.
\end{theorem}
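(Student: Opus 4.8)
The plan is to establish existence by a standard monotone operator / direct method argument applied to the energy functional, and then to prove uniqueness by strict monotonicity. First I would reformulate the problem variationally: define on the affine space $g + W^{1,p}_0(\Omega)$ (intersected with the relevant fractional Sobolev regularity) the functional $\mathcal{J}(v) := \int_\Omega \big[\widetilde{F}(x,Dv) - |F|^{p-2}F\cdot Dv\big]\,dx + \iint_{\ern\times\ern} \Phi(v(x)-v(y))K(x,y)\,dx\,dy$, where $\widetilde{F}$ is a potential with $\partial_z\widetilde{F}=A$ (available up to adjusting $A$ by a curl-free part; alternatively one works directly with the monotone operator without a potential) and $\Phi(t)=\frac{1}{q}|t|^q$. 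The structure conditions \eqref{eq:A} give $\widetilde{F}(x,z)\simeq|z|^p$ and strict convexity in $z$, while \eqref{eq:K} gives $\Lambda^{-1}|x-y|^{-n-sq}\le K(x,y)\le\Lambda|x-y|^{-n-sq}$, so the nonlocal term is comparable to the Gagliardo seminorm $[v]_{W^{s,q}(\ern)}^q$.

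The key steps, in order: (i) Check that $\mathcal{J}$ is well-defined and finite on $\mathbb{X}_g(\Omega;\Omega')$ — here one uses $g\in W^{1,p}(\Omega)\cap W^{s,q}(\Omega')\cap L^{q-1}_{sq}(\ern)$ together with Lemma \ref{lem:embed2} to control the local part of the Gagliardo integral by the gradient, and one splits $\iint_{\ern\times\ern}=\iint_{\Omega'\times\Omega'}+2\iint_{\Omega\times(\ern\setminus\Omega')}+\dots$, bounding the long-range contribution via the tail space $L^{q-1}_{sq}(\ern)$ membership of $g$ (noting $u=g$ outside $\Omega$). (ii) Coercivity: show $\mathcal{J}(v)\to\infty$ as $\|v-g\|\to\infty$ in the natural norm; the $|Dv|^p$ term dominates $|F|^{p-2}F\cdot Dv$ by Young's inequality, and the nonlocal term is nonnegative and controls the fractional seminorm of $v-g$ after a standard splitting and using finiteness of the $g$-tail. (iii) Weak lower semicontinuity: $\mathcal{J}$ is convex (strictly convex in $Dv$ by \eqref{eq:A}, and $\Phi$ convex) and continuous on the reflexive Banach space, hence weakly lower semicontinuous; since $\mathbb{X}_g(\Omega;\Omega')$ is a closed convex subset (the constraint $v=g$ a.e. in $\ern\setminus\Omega$ and $v-g\in W^{1,p}_0(\Omega)$ are both weakly closed), the direct method yields a minimizer $u$. (iv) Euler–Lagrange: perturb $u$ by $t\varphi$, $\varphi\in\mathbb{X}_0(\Omega;\Omega')$, differentiate at $t=0$ — here I must justify that the difference quotients of the nonlocal term converge, which follows from the $q$-growth of $\Phi'$ and dominated convergence using the integrability just established — to get exactly the weak formulation of \eqref{eq:bdary}. (v) Uniqueness: if $u_1,u_2$ are two solutions, subtract the weak formulations and test with $\varphi=u_1-u_2\in\mathbb{X}_0(\Omega;\Omega')$; the local term gives $\int_\Omega\langle A(x,Du_1)-A(x,Du_2),Du_1-Du_2\rangle\,dx\ge c\int_\Omega(|Du_1|+|Du_2|)^{p-2}|Du_1-Du_2|^2\,dx$ by monotonicity from \eqref{eq:A}, and the nonlocal term gives a nonnegative quantity by monotonicity of $t\mapsto|t|^{q-2}t$, forcing $Du_1=Du_2$ a.e. in $\Omega$; combined with $u_1-u_2\in W^{1,p}_0(\Omega)$ and $u_1=u_2$ outside $\Omega$, Poincaré gives $u_1=u_2$.

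I expect the main obstacle to be step (i)–(ii), namely making the nonlocal energy rigorously finite and coercive on $\mathbb{X}_g(\Omega;\Omega')$ when $q>p$: in that regime $\mathbb{X}$ also requires $L^\infty$ control, and one has to be careful that minimizers stay in $L^\infty$ (or argue that the coercivity in the $W^{1,p}\cap W^{s,q}$ topology suffices and $L^\infty$-membership of the minimizer is inherited from $g$ via a comparison/truncation argument, or is simply not needed for the weak formulation as stated). The splitting of the double integral over $\ern\times\ern$ into near-diagonal, medium-range, and tail pieces — and checking each is controlled by the data $\{|F|,g\}$ with the right dependence — is the technically delicate part; everything else is the textbook convex direct method plus strict monotonicity. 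A cleaner alternative that sidesteps the potential $\widetilde{F}$ is to phrase the whole argument via the theory of monotone, coercive, hemicontinuous operators (Browder–Minty) on the reflexive space $\mathbb{X}_g(\Omega;\Omega')-g$, with the operator $\mathcal{A}u = -\divo A(x,Du) + \mathcal{L}u$; I would present the variational version since it parallels the functional displayed in the introduction, but mention Browder–Minty as the fallback for the case where $A$ is not a gradient.
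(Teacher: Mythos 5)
Your fallback — the Browder--Minty monotone operator argument with $\mathcal{A}u=-\divo A(x,Du)+\mathcal{L}u$ — is in fact exactly what the paper does, and is the correct primary route. The paper defines the operator $\mathcal{A}=\mathcal{A}_p+\mathcal{A}_q$ on $\mathbb{X}_0(\Omega;\Omega')$, with the nonlocal part split as $\iint_{\Omega'\times\Omega'}+2\iint_{\Omega\times(\ern\setminus\Omega')}$ (so the long-range contribution is controlled via $g\in L^{q-1}_{sq}(\ern)$, just as you anticipate), observes that $\mathcal{A}_q$ matches the operator in Brasco--Lindgren--Schikorra \cite[Prop.~2.12]{BLS}, verifies monotonicity, coercivity and weak continuity, and invokes the standard monotone operator theorem; uniqueness then follows from strict monotonicity exactly as in your step (v).

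The variational route you put forward as primary has a real gap: the structure condition \eqref{eq:A} asserts only that $A(x,\cdot)$ is $C^1$ with elliptic Jacobian, not that $\partial_z A(x,\cdot)$ is symmetric, so in general no potential $\widetilde{F}$ with $\partial_z\widetilde{F}=A$ exists. The parenthetical \emph{``available up to adjusting $A$ by a curl-free part''} does not repair this — a non-symmetric $\partial_z A$ is an obstruction, not a gauge. This is precisely why Browder--Minty is the natural choice here and why you should lead with it. A second, smaller point: the $L^\infty$ concern you raise for $q>p$ is a misreading of the function spaces. The existence theorem asserts a solution in $\mathbb{X}_g(\Omega;\Omega')$, whose definition in the paper is $W^{1,p}(\Omega)\cap W^{s,q}(\Omega')\cap L^{q-1}_{sq}(\ern)$ together with the boundary constraints; it does \emph{not} inherit the $L^\infty$ requirement that enters the space $\mathbb{X}(\Omega)$ defined in \eqref{eq:X.l}. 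The $L^\infty$ hypothesis is imposed as an \emph{additional assumption} on $u$ in the regularity Theorems \ref{thm:l} and \ref{thm:b} (where one writes $u\in\mathbb{X}(\Omega)\cap\mathbb{X}_g(\Omega;\Omega')$), not a conclusion one must extract at the existence stage. So steps (i)--(ii) reduce to the routine verification you describe, and the coercivity/continuity checks for $\mathcal{A}_q$ can be quoted from \cite{BLS} as the paper does.
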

\begin{proof}
 We define the functional $\mathcal{A}: \mathbb{X}_0(\Omega;\Omega')\to \big(W^{1,p}(\Omega)\cap W^{s,q}(\Omega')\big)^*$ by 
	\begin{align*}
		\langle \mathcal{A}(v), \phi \rangle &= \int_{\Omega}A(x,Dv)\cdot D\phi \,dx\\
  &\quad+ \left[ \int_{\Omega'}\int_{\Omega'}[v(x)+g(x)-v(y)-g(y)]^{q-1}\big(\phi(x)-\phi(y)\big)K(x,y)\,dx\,dy \right.\\
		& \left.\quad\quad + 2\int_{\Omega}\int_{\ern\setminus\Omega'}[v(x)+g(x)-v(y)]^{q-1}\phi(x)K(x,y)\,dx\,dy\right] \\
		&=: \langle \mathcal{A}_p(v),\phi \rangle + \langle \mathcal{A}_q(v),\phi\rangle,
	\end{align*}
for all $\phi\in W^{1,p}(\Omega)\cap W^{s,q}(\Omega')$, where we have set $[\xi]^{\ell-1}=|\xi|^{\ell-2}\xi$, for $\ell>1$ and $\xi\in\ern$. Note that the operator $\mathcal{A}_q$ resembles the  operator $\mathcal{A}_0$ defined in \cite[Proposition 2.12, page 802]{BLS}. Consequently, it can be easily verified that the operator $\mathcal{A}$ is monotone,  coercive and weakly continuous in $\mathbb{X}_0(\Omega;\Omega')$; that is,
\begin{itemize}
    \item[(i)] for all $v,w\in \mathbb{X}_0(\Omega;\Omega')$, there holds
    \begin{align*}
        \langle \mathcal{A}(v)-\mathcal{A}(w), v-w \rangle \geq 0;
    \end{align*}
    \item[(ii)] the following holds 
    \begin{align*}
        \lim_{\|u\|_{W^{1,p}(\Omega)\cap W^{s,q}(\Omega')}\to\infty}\frac{\langle \mathcal{A}(u), u \rangle}{\|u\|_{W^{1,p}(\Omega)\cap W^{s,q}(\Omega')}}=\infty;
    \end{align*}
    \item[(iii)] if $\{u_k\}\subset \mathbb{X}_0(\Omega;\Omega')$ is such that $u_k\to u$ in $W^{1,p}(\Omega)\cap W^{s,q}(\Omega')$, then \begin{align*}
        \lim_{k\to\infty} \langle \mathcal{A}(u_k)-\mathcal{A}(u), \phi \rangle =0 \quad\mbox{for all }\phi\in W^{1,p}(\Omega)\cap W^{s,q}(\Omega').
    \end{align*}
\end{itemize}
Moreover, the map $T_F:W^{1,p}(\Omega)\cap W^{s,q}(\Omega')\to\mathbb{R}$ defined by $T_F(\vp)=\int_{\Omega}|F|^{p-2}F\cdot D\vp$ is in the dual of $W^{1,p}(\Omega)\cap W^{s,q}(\Omega')$.
Hence, the existence and uniqueness of $u$ follow by the standard theory of monotone operators.
\end{proof}

\begin{remark}%\label{rmk:Psi}
From now on, we implicitly use the following measure density property.  
	\begin{align*}%\label{eq:Omega}
		\text{If } \,\, \Omega\,\,\text{ is }\,(\delta,4R_0)\text{--Reifenberg flat with }\,\delta\in\left(0,\frac{1}{2^{n+1}}\right)\,\text{ and }\,R_0>0,
	\end{align*}
	then there holds:
	\begin{align}\label{eq:m.den1}
		\sup_{0<r\leq 4R_0}\sup_{y\in\Omega}\dfrac{|Q_r(y)|}{|\Omega\cap Q_r(y)|}\leq 4^n \quad\mbox{and}\quad \inf_{0<r\leq 4R_0}\inf_{y\in\partial\Omega}\dfrac{|Q_r(y)\cap\Omega^c|}{|Q_r(y)|}\geq 4^{-n}.
	\end{align}
	% and
	% \begin{align}\label{eq:m.den2}
	% 	\inf_{0<r\leq 4R_0}\inf_{y\in\partial\Omega}\dfrac{|Q_r(y)\cap\Omega^c|}{|Q_r(y)|}\geq 4^{-n}.   
	% \end{align}
\end{remark}

Now we can show Sobolev-Poincar\'{e} inequality on the boundary of the domain with the assumption \eqref{eq:m.den1}.

\begin{lemma}[See \cite{BBDL}]\label{lem:embed3}
 Let $\Omega$ be an open set satisfying \eqref{eq:m.den1} with $R_0>0$. Let $Q_r\equiv Q_r(x_0)$ with $x_0\in\Omega$ be such that $Q_{\frac{3}{2}r}(x_0)\not\subset\Omega$, for $r\in (0,R_0)$. Then, for any $v\in W^{1,\ell}(\Omega_{2r})$ with $v=0$ on $\partial_w\Omega_{2r}$, we have
	\begin{align*}
		\left(\mean{\Omega_{2r}}\left|\dfrac{v(x)}{r}\right|^{\ell^*}\,dx\right)^{\frac{1}{\ell^*}}\leq c\left(\mean{\Omega_{2r}}|Dv|^{\ell}\,dx\right)^{\frac{1}{\ell}},
	\end{align*}
	where $c=c(n,\ell)$ is a constant.
\end{lemma}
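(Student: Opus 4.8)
The plan is to reduce the boundary estimate to the interior Sobolev--Poincar\'e inequality of Lemma~\ref{lem:SP} on the cube $Q_{2r}\equiv Q_{2r}(x_0)$, via the zero extension of $v$. Since $v=0$ on $\partial_w\Omega_{2r}=\partial\Omega\cap Q_{2r}$, I would first argue that the function $\bar v\colon Q_{2r}\to\er$ given by $\bar v=v$ on $\Omega_{2r}$ and $\bar v=0$ on $Q_{2r}\setminus\Omega_{2r}$ belongs to $W^{1,\ell}(Q_{2r})$, with $D\bar v=Dv\,\chi_{\Omega_{2r}}$; this is where the boundary condition is genuinely used, and I would justify it by approximating $v$ in $W^{1,\ell}(\Omega_{2r})$ by functions vanishing in a neighbourhood of $\partial\Omega\cap Q_{2r}$ (the meaning of ``$v=0$ on $\partial_w\Omega_{2r}$''), whose zero extensions are manifestly $W^{1,\ell}(Q_{2r})$, and then passing to the limit. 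Applying Lemma~\ref{lem:SP} on $Q_{2r}$ to $\bar v$ and using $\int_{Q_{2r}}|D\bar v|^{\ell}=\int_{\Omega_{2r}}|Dv|^{\ell}$ together with $|\Omega_{2r}|\le|Q_{2r}|$ gives
\begin{align*}
\left(\mean{Q_{2r}}\left|\frac{\bar v-(\bar v)_{2r}}{r}\right|^{\ell^*}dx\right)^{1/\ell^*}\le c\left(\mean{\Omega_{2r}}|Dv|^{\ell}\,dx\right)^{1/\ell},
\end{align*}
with $c=c(n,\ell)$.

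Next I would absorb the average $(\bar v)_{2r}$ by exploiting the hypothesis $Q_{\frac32 r}(x_0)\not\subset\Omega$. Since $x_0\in\Omega$, a segment joining $x_0$ to a point of $Q_{\frac32 r}(x_0)\setminus\Omega$ must meet $\partial\Omega$ at some $x_1\in\partial\Omega\cap Q_{\frac32 r}(x_0)$, and then $Q_{r/2}(x_1)\subset Q_{2r}(x_0)$. The second inequality in \eqref{eq:m.den1}, applied at $x_1\in\partial\Omega$ with radius $r/2\le 4R_0$, therefore yields
\begin{align*}
|E|:=|Q_{2r}\setminus\Omega_{2r}|=|Q_{2r}\cap\Omega^c|\ge|Q_{r/2}(x_1)\cap\Omega^c|\ge 4^{-n}|Q_{r/2}(x_1)|=4^{-2n}|Q_{2r}|.
\end{align*}
Since $\bar v\equiv 0$ on $E$, writing $(\bar v)_{2r}=\frac{1}{|E|}\int_E\big((\bar v)_{2r}-\bar v\big)\,dx$, H\"older's inequality together with the previous display gives $|(\bar v)_{2r}|\le c\,r\big(\mean{\Omega_{2r}}|Dv|^{\ell}\,dx\big)^{1/\ell}$ with $c=c(n,\ell)$.

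Finally, combining the two bounds by the triangle inequality controls $\big(\mean{Q_{2r}}|\bar v/r|^{\ell^*}\,dx\big)^{1/\ell^*}$ by $c\big(\mean{\Omega_{2r}}|Dv|^{\ell}\,dx\big)^{1/\ell}$. Since $\int_{Q_{2r}}|\bar v|^{\ell^*}=\int_{\Omega_{2r}}|v|^{\ell^*}$ and, by the first inequality in \eqref{eq:m.den1} applied at $x_0\in\Omega$ with radius $2r\le 4R_0$, $|Q_{2r}|\le 4^n|\Omega_{2r}|$, the averages over $Q_{2r}$ and over $\Omega_{2r}$ are comparable up to a constant $c=c(n)$; replacing $\mean{Q_{2r}}|\bar v/r|^{\ell^*}$ by $\mean{\Omega_{2r}}|v/r|^{\ell^*}$ then finishes the proof. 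The only delicate point is the zero-extension step in the first paragraph, which is exactly where the precise interpretation of the boundary condition ``$v=0$ on $\partial_w\Omega_{2r}$'' is needed; the remaining steps are a routine combination of Lemma~\ref{lem:SP}, H\"older's inequality, and the measure density property \eqref{eq:m.den1}.
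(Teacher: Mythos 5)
The paper never proves this lemma itself; it is cited from \cite{BBDL}, so there is no internal proof to compare against. Your argument is the standard route for boundary Sobolev--Poincar\'e inequalities under a measure density hypothesis: zero-extend $v$ to $\bar v$ on $Q_{2r}$, apply the interior Lemma~\ref{lem:SP} to $\bar v$, locate a boundary point $x_1\in\partial\Omega\cap Q_{\frac{3}{2}r}(x_0)$ via $x_0\in\Omega$ and $Q_{\frac{3}{2}r}(x_0)\not\subset\Omega$, use the second bound in \eqref{eq:m.den1} on $Q_{r/2}(x_1)\subset Q_{2r}(x_0)$ to get $|Q_{2r}\cap\Omega^c|\geq 4^{-2n}|Q_{2r}|$, absorb the cube average via $(\bar v)_{2r}=|E|^{-1}\int_E\big((\bar v)_{2r}-\bar v\big)\,dx$, and convert the $Q_{2r}$-average to an $\Omega_{2r}$-average using the first bound in \eqref{eq:m.den1}. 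All of these steps check out numerically and logically (note $r/2<4R_0$ and $2r<4R_0$, as required to invoke \eqref{eq:m.den1}). The one delicate point is exactly the one you flag: under the stated hypotheses (measure density only), the condition ``$v=0$ on $\partial_w\Omega_{2r}$'' has to be interpreted as membership of $v$ in the $W^{1,\ell}(\Omega_{2r})$-closure of functions vanishing near $\partial\Omega\cap Q_{2r}$, or equivalently that the zero extension $\bar v$ lies in $W^{1,\ell}(Q_{2r})$ with $D\bar v=Dv\,\chi_{\Omega_{2r}}$; measure density alone does not force this for a raw trace-zero condition. This is precisely the reading under which your limiting step is valid, and it is consistent with how the lemma is applied in the paper (e.g.\ in \eqref{eq:rev.hol.b.2}, where $v=u-g$ with $u=g$ a.e.\ outside $\Omega$, so the Sobolev zero extension is immediate). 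With that clarification, your proof is correct.
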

% \begin{proof}
% 	Since $v\in W^{1,\ell}(\Omega_{2r})$ with $v=0$ on $\partial\Omega\cap Q_{2r}$, it can be extended to whole $Q_{2r}$ by setting $v=0$ in $Q_{2r}\setminus\Omega$. Moreover, on account of the {\color{magenta}uniform measure density conditions} \eqref{eq:m.den1} and \eqref{eq:m.den2}, we have $|Q_{2r}\cap\Omega|\simeq |Q_{2r}\cap \Omega^c|$. Noting the fact $(v)_{Q_{2r}\cap \Omega^c}=0$, we deduce that
% 	\begin{align*}
% 		\left(\mean{\Omega_{2r}}\left|\dfrac{v(x)}{r}\right|^{\ell^*}\,dx\right)^{\frac{1}{\ell^*}}&\lesssim \left(\mean{Q_{2r}}\left|\dfrac{v(x)-(v)_{2r}}{r}\right|^{\ell^*}\,dx\right)^{\frac{1}{\ell^*}}+\dfrac{|(v)_{2r}-(v)_{(\Omega^c)_{2r}}|}{r}\\
% 		&\lesssim \left(\mean{Q_{2r}}\left|\dfrac{v(x)-(v)_{2r}}{r}\right|^{\ell^*}\,dx\right)^{\frac{1}{\ell^*}}+\left(\mean{(\Omega^c)_{2r}}\left|\dfrac{(v)_{2r}-v(x)}{r}\right|^{\ell^*}\,dx\right)^{\frac{1}{\ell^*}},
% 	\end{align*}
%  where we have set $(\Omega^c)_{2r}:=\Omega^c\cap Q_{2r}$. Now, the result of the lemma follows by applying the classical Sobolev inequality of Lemma \ref{lem:SP} on $Q_{2r}$ and noting that $v=|D v|=0$ in $\Omega^c\cap Q_{2r}$.
% \end{proof}

We end this section with the following lemma.

\begin{lemma}[See \cite{BBL,DFTW}]
 Define the vector field $V(\cdot):\ern\rightarrow\ern$ by
	\begin{align*}%\label{eq:V}
		V(z):=|z|^{\frac{p-2}{2}}z.
	\end{align*} 
 Then, for all $z_1,z_2\in\ern$ and $\vartheta\in(0,1)$, we have
	\begin{align}\label{eq:AV}
		\left(A(x,z_1)-A(x,z_2)\right)\cdot(z_1-z_2)\eqsim|V(z_1)-V(z_2)|^2 \quad\mbox{for a.e. }x\in\Omega,
	\end{align}
	and 
	\begin{align}\label{eq:V,theta}
		|z_1-z_2|^p\lesssim \vartheta|z_1|^p+c(\vartheta)|V(z_1)-V(z_2)|^2.
	\end{align}
 \end{lemma}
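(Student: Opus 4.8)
The proof rests on two classical building blocks that I would record first. The first is the pointwise equivalence
\begin{align*}
|V(z_1)-V(z_2)|^2\simeq(|z_1|+|z_2|)^{p-2}|z_1-z_2|^2\qquad\text{for all }z_1,z_2\in\ern,
\end{align*}
with implicit constants depending only on $p$; it follows by writing $V(z_1)-V(z_2)=\int_0^1\partial_zV\big(z_2+t(z_1-z_2)\big)(z_1-z_2)\,dt$, using $|\partial_zV(z)|\simeq|z|^{(p-2)/2}$, and invoking the second building block, the elementary estimate $\int_0^1|z_2+t(z_1-z_2)|^{\beta}\,dt\simeq(|z_1|+|z_2|)^{\beta}$ valid for $\beta>-1$ with constants depending on $\beta$ (used below with $\beta=p-2$). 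Since $p>1$, both sides of \eqref{eq:AV} and \eqref{eq:V,theta} are continuous in $(z_1,z_2)$ and vanish when $z_1=z_2$, so any passage through the origin of the relevant segments is harmless and handled by approximation.

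For \eqref{eq:AV} I would apply the fundamental theorem of calculus to $t\mapsto A(x,z_2+t(z_1-z_2))$ — first for $z_1,z_2$ whose segment avoids the origin, then in general by continuity — to get
\begin{align*}
\big(A(x,z_1)-A(x,z_2)\big)\cdot(z_1-z_2)=\int_0^1\big\langle\partial_zA\big(x,z_2+t(z_1-z_2)\big)(z_1-z_2),\,z_1-z_2\big\rangle\,dt.
\end{align*}
Bounding the integrand below and above by the two lines of \eqref{eq:A} sandwiches this quantity between $\Lambda^{-1}|z_1-z_2|^2\int_0^1|z_2+t(z_1-z_2)|^{p-2}\,dt$ and $\Lambda$ times the same expression, and then the elementary integral estimate together with the equivalence above produces \eqref{eq:AV}.

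For \eqref{eq:V,theta} I would argue by cases. If $p\ge2$, then $|z_1-z_2|\le|z_1|+|z_2|$ gives $(|z_1|+|z_2|)^{p-2}\ge|z_1-z_2|^{p-2}$, hence $|z_1-z_2|^p\lesssim|V(z_1)-V(z_2)|^2$ and the $\vartheta$-term is superfluous. If $1<p<2$, split according to whether $|z_1-z_2|>|z_1|$ or $|z_1-z_2|\le|z_1|$: in the former case $|z_1|+|z_2|\simeq|z_1-z_2|$, so $|V(z_1)-V(z_2)|^2\simeq|z_1-z_2|^p$ outright; in the latter case $|z_1|+|z_2|\simeq|z_1|$, so $|V(z_1)-V(z_2)|^2\simeq|z_1|^{p-2}|z_1-z_2|^2$, and writing $|z_1-z_2|^p=|z_1|^{(2-p)p/2}\big(|z_1|^{p-2}|z_1-z_2|^2\big)^{p/2}$ and applying Young's inequality with conjugate exponents $\tfrac{2}{2-p}$ and $\tfrac{2}{p}$ yields $|z_1-z_2|^p\le\vartheta|z_1|^p+c(\vartheta)|z_1|^{p-2}|z_1-z_2|^2\lesssim\vartheta|z_1|^p+c(\vartheta)|V(z_1)-V(z_2)|^2$. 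Combining the cases completes the proof. The one delicate point — and the reason the case split is needed rather than a direct Young's inequality — is that in the subquadratic regime the weight $(|z_1|+|z_2|)^{p-2}$ degenerates as $|z_1|+|z_2|\to\infty$, so one must first compare $|z_1|+|z_2|$ to either $|z_1|$ or $|z_1-z_2|$ in order to land only the term $\vartheta|z_1|^p$ on the right-hand side and not an unwanted $|z_2|^p$.
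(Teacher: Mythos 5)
The paper offers no proof of this lemma; it simply cites \cite{BBL,DFTW}, so there is no internal argument to compare against. Your proof is correct and is essentially the one found in those references and in the broader literature (Hamburger, Diening--Ettwein, Giaquinta--Modica). For \eqref{eq:AV} you apply the fundamental theorem of calculus to $t\mapsto A(x,z_2+t(z_1-z_2))$, sandwich the resulting integrand between the two lines of \eqref{eq:A}, invoke $\int_0^1|z_2+t(z_1-z_2)|^{p-2}\,dt\simeq(|z_1|+|z_2|)^{p-2}$ (valid for $p-2>-1$, i.e.\ $p>1$), and then the pointwise equivalence $|V(z_1)-V(z_2)|^2\simeq(|z_1|+|z_2|)^{p-2}|z_1-z_2|^2$; for \eqref{eq:V,theta} the case split on $p\geq 2$ and $1<p<2$ with the further dichotomy $|z_1-z_2|\gtrless|z_1|$ followed by Young's inequality is exactly right, and your closing remark correctly identifies why a naive Young's inequality without the dichotomy would leak a $|z_2|^p$ term when $p<2$.

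One small point of craft: in the building block $|V(z_1)-V(z_2)|^2\simeq(|z_1|+|z_2|)^{p-2}|z_1-z_2|^2$, your sketch using $|\partial_z V(z)|\simeq|z|^{(p-2)/2}$ and the $\int_0^1$ estimate immediately gives the upper bound $\lesssim$, but the lower bound $\gtrsim$ needs a touch more than the magnitude of $\partial_z V$: one should exploit that $\partial_z V(z)=|z|^{(p-2)/2}\bigl(I+\tfrac{p-2}{2}\,\tfrac{z\otimes z}{|z|^2}\bigr)$ has smallest eigenvalue $\min\{1,p/2\}\,|z|^{(p-2)/2}$ (i.e.\ quantitative positivity), pair $V(z_1)-V(z_2)$ with $z_1-z_2$ via the FTC representation, and finish with Cauchy--Schwarz. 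This is of course classical and harmless to cite, but worth noting if you intend your sketch to be self-contained.
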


\section{Self-improving Property}
In this section, we prove the higher integrability (for small exponents) of the gradient of weak solutions. We present the proof only for the boundary case since the interior case can be obtained similarly. Throughout this section, \eqref{eq:s1pq}, \eqref{eq:A} and \eqref{eq:K} are assumed. We start with the following Caccioppoli type inequality.

\begin{lemma}[Caccioppoli inequality]\label{lem:cacc.b}
Let $u\in\mathbb{X}_g(\Omega;\Omega')$ be the weak solution to problem \eqref{eq:bdary}. Then for any $\rho\in(0,1)$ and $x_0\in\Omega$ such that $Q_\rho\equiv Q_{\rho}(x_0)\Subset\Omega'$, there holds
\begin{align}\label{eq:cacc.b}
&\mean{\Omega_{\frac{\rho}{2}}}|Du|^p\,dx+|\Omega_{\frac{\rho}{2}}|^{-1}[u]^q_{W^{s,q}(Q_{\frac{\rho}{2}})} \nonumber\\
&\quad\lesssim \rho^{-p}\mean{\Omega_{\rho}}|u-g|^p\,dx+\mean{\Omega_{\rho}}|Dg|^p\,dx+\mean{\Omega_{\rho}}|F|^p\,dx+|\Omega_{\rho}|^{-1}[g]^q_{W^{s,q}(Q_{\rho})} \nonumber\\
&\quad\quad+\rho^{-sq}\left(\mean{Q_{\rho}}|g-(g)_{\rho}|^q\,dx+\mean{\Omega_{\rho}}|u-g|^q\,dx\right)\nonumber\\
&\quad\quad+\mean{\Omega_{\rho}}|u-g|\,dx\left(\int_{\ern\setminus Q_{\rho}}\dfrac{|u(y)-(u)_{\rho}|^{q-1}}{|y-x_0|^{n+sq}}\,dy \right)
\end{align}
with the implicit constant $c=c(\data)$.
\end{lemma}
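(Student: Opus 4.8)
The plan is to test the weak formulation with a localized version of $u-g$ and to track, term by term, the local contribution, the nonlocal contribution, and the nonlocal tail contribution. First I would fix a cutoff $\eta\in C_0^\infty(Q_{3\rho/4})$ with $\eta\equiv 1$ on $Q_{\rho/2}$, $0\le\eta\le 1$, $|D\eta|\lesssim \rho^{-1}$, and use $\varphi=\eta^p(u-g)$ as a test function; this is admissible since $u-g\in W_0^{1,p}(\Omega)$ and, by Lemma \ref{lem:embed2}, $u-g\in W^{s,q}(\Omega')$, so $\varphi\in\mathbb{X}_0(\Omega;\Omega')$. For the local (leading) term, I expand $D\varphi=\eta^p Du - \eta^p Dg + p\eta^{p-1}(u-g)D\eta$, and use the ellipticity lower bound in \eqref{eq:A} together with the growth upper bound, absorbing the cross terms via Young's inequality with a small parameter. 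This gives, modulo the nonlocal terms,
\begin{align*}
\mean{\Omega_{\rho/2}}|Du|^p\,dx \lesssim \rho^{-p}\mean{\Omega_\rho}|u-g|^p\,dx+\mean{\Omega_\rho}|Dg|^p\,dx+\mean{\Omega_\rho}|F|^p\,dx,
\end{align*}
which is exactly the first line of the right-hand side of \eqref{eq:cacc.b}, plus whatever the nonlocal double integral contributes.

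For the nonlocal term, I split the double integral $\iint_{\ern\times\ern}$ as $\iint_{Q_\rho\times Q_\rho}+2\iint_{Q_\rho\times(\ern\setminus Q_\rho)}$ (using symmetry of $K$ and the fact that $\varphi$ is supported in $Q_{3\rho/4}\subset Q_\rho$), and write the integrand's first factor $[u(x)-u(y)]^{q-1}$, with $[\cdot]^{q-1}=|\cdot|^{q-2}(\cdot)$, against $\varphi(x)-\varphi(y)$. On $Q_\rho\times Q_\rho$ the standard monotonicity trick — inserting $\pm(g(x)-g(y))$ and using the algebraic inequality that $[a]^{q-1}$ controls from below the "diagonal" part — produces a good term $\gtrsim |\Omega_{\rho/2}|^{-1}[u]^q_{W^{s,q}(Q_{\rho/2})}$ (or rather $[\eta^{\ast}(u-g)]$-type seminorm, then compared to $[u]$ via $[g]$) and bad terms bounded, using \eqref{eq:K} and Young's inequality, by $|\Omega_\rho|^{-1}[g]^q_{W^{s,q}(Q_\rho)}$ together with $\rho^{-sq}\bigl(\mean{Q_\rho}|g-(g)_\rho|^q + \mean{\Omega_\rho}|u-g|^q\bigr)$ (the last coming from estimating $\iint_{Q_\rho\times Q_\rho}|u-g|^{\,}\eta^{p-1}|\cdots| |x-y|^{-n-sq}$ crudely by pulling out $\|u-g\|$ and integrating $|x-y|^{-n-sq}$ over a bounded annulus, which yields the $\rho^{-sq}$ weight). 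These are precisely the fourth and fifth lines of \eqref{eq:cacc.b}.

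The main obstacle, as the authors themselves flag, is the long-range interaction term $2\iint_{Q_\rho\times(\ern\setminus Q_\rho)}[u(x)-u(y)]^{q-1}\eta(x)^p(u(x)-g(x))\,K(x,y)\,dx\,dy$. Here $y$ ranges over the exterior, so there is no cancellation; one bounds $|u(x)-u(y)|^{q-1}\lesssim |u(x)-(u)_\rho|^{q-1}+|u(y)-(u)_\rho|^{q-1}$, uses \eqref{eq:K}, and for $x\in\spt\eta$, $y\notin Q_\rho$ estimates $|x-y|\simeq |y-x_0|$ up to constants (so $\int_{\ern\setminus Q_\rho}|x-y|^{-n-sq}|u(y)-(u)_\rho|^{q-1}\,dy\lesssim \int_{\ern\setminus Q_\rho}|u(y)-(u)_\rho|^{q-1}|y-x_0|^{-n-sq}\,dy$, the tail), while the $|u(x)-(u)_\rho|^{q-1}$ piece is absorbed into the $\rho^{-sq}\mean{Q_\rho}|g-(g)_\rho|^q$-type and $[u]^q_{W^{s,q}}$-type terms after a further triangle-inequality split $u(x)-(u)_\rho = (u(x)-g(x)) + (g(x)-(g)_\rho) + ((g)_\rho-(u)_\rho)$. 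The factor $\mean{\Omega_\rho}|u-g|\,dx$ multiplying the tail in the last line of \eqref{eq:cacc.b} comes from integrating $\eta^p|u-g|$ in $x$ and factoring the $x$-integral from the ($x$-independent, up to the harmless $|x-y|\simeq|y-x_0|$ replacement) tail integral in $y$; one keeps it linear (rather than applying Young) precisely to match the statement. I would finally collect all estimates, choose the Young parameters small enough to absorb $\mean{\Omega_{\rho/2}}|Du|^p$ and the good seminorm term on the left, and divide by $|\Omega_{\rho/2}|$, noting $|\Omega_{\rho/2}|\simeq|Q_\rho|\simeq|\Omega_\rho|$ is not assumed here (no Reifenberg hypothesis in this lemma), so the $|\Omega_{\cdot}|^{-1}$ weights must be carried as written rather than converted to $\rho^{-n}$.
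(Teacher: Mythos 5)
Your overall strategy does follow the paper's: test with a cutoff power times $(u-g)$, split the nonlocal form into the interaction on $Q_\rho\times Q_\rho$ and the tail on $Q_\rho\times(\ern\setminus Q_\rho)$, use $|x-y|\simeq|y-x_0|$ in the latter, and factor out $\int_{Q_\rho}|u-g|\,dx$ to match the last line of \eqref{eq:cacc.b}. Where the paper takes $\varphi=\eta^{\ell}(u-g)$ with $\ell:=\max\{p,q\}$, you take $\varphi=\eta^p(u-g)$; when $q>p$ this is a genuine departure (the cutoff will enter the crucial algebraic step with a fractional power $p/q<1$), but it can be made to work using the $(p/q)$-H\"older continuity of $t\mapsto t^{p/q}$ together with the standing hypothesis $p>sq$. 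You should say so, since it is not automatic for all $q$.

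The genuine gap is in your treatment of the interior nonlocal bad term. You propose to bound a contribution of the form $\iint_{Q_\rho\times Q_\rho}(\cdots)\,|x-y|^{-n-sq}\,dx\,dy$ "crudely by pulling out $\|u-g\|$ and integrating $|x-y|^{-n-sq}$ over a bounded annulus." This cannot close: over $Q_\rho\times Q_\rho$ the kernel $|x-y|^{-n-sq}$ is not integrable across the diagonal $x=y$, and there is no annulus keeping $|x-y|$ bounded below, so no "crude" absorption of a sup norm is available and the claimed $\rho^{-sq}$ factor does not appear this way. The mechanism that actually renders the interior bad term finite, and which you must invoke, is that after the insertion/monotonicity trick and Young's inequality the residual carries the factor $|\eta^{\ell/q}(x)-\eta^{\ell/q}(y)|^q$. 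With the paper's $\ell\geq q$ this is $\lesssim\rho^{-q}|x-y|^q$ (Lipschitz cutoff), while with your $\ell=p<q$ it is $\lesssim\rho^{-p}|x-y|^p$ (H\"older). Either way this desingularizes the kernel to $|x-y|^{q(1-s)-n}$ or $|x-y|^{p-sq-n}$, which are integrable on $Q_\rho$ precisely because $q(1-s)>0$ and $p>sq$; integrating in $y$ yields $\rho^{q(1-s)}$ (resp.\ $\rho^{p-sq}$), and multiplying by the $\rho^{-q}$ (resp.\ $\rho^{-p}$) from the cutoff gives the $\rho^{-sq}$ weight. Without this desingularization step the interior estimate is not proved. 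A smaller correction: the good term delivered by the insertion argument is $[u]^q_{W^{s,q}(Q_{\rho/2})}$ directly (the factor $|u(x)-u(y)|^{q-2}(u(x)-u(y))$ is already present in the weak form, and $\eta\equiv1$ on $Q_{\rho/2}$); there is no need to pass through a ``$[\eta^{\ast}(u-g)]$-type'' seminorm and then compare to $[u]$ via $[g]$, and doing so would give you the wrong quantity on the left of \eqref{eq:cacc.b}.
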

\begin{proof}
We  take $\eta\in C^{1}_0(Q_{\rho})$ such that $\eta\equiv 1$ in $Q_{\rho/2}$, $\supp(\eta)\subset Q_{2\rho/3}$ and $0\leq\eta\leq 1$ with $|D\eta|\leq\frac{c}{\rho}$ in $Q_{2\rho/3}$. 
For $w=u-g$ and $\ell:=\max\{p,q\}$, taking $\vp=\eta^\ell w\in \mathbb{X}_0(\Omega;\Omega')$ as a test function in the weak formulation of \eqref{eq:bdary}, we obtain
	\begin{align}\label{eq:cacc.b.1}
		I_0&:=\int_{\Omega_{\rho}}A(x,Du)\cdot D(u-g)\eta^\ell\,dx\nonumber\\
		&\leq\ell\int_{\Omega_{\rho}}|A(x,Du)|\eta^{\ell-1}|u-g||D\eta|\,dx+\int_{\Omega_{\rho}}|F|^{p-1}|D\vp|\,dx\nonumber\\
		&\quad-\int_{Q_{\rho}}\int_{Q_{\rho}}|u(x)-u(y)|^{q-2}(u(x)-u(y))(\vp(x)-\vp(y))K(x,y)\,dx\,dy\nonumber\\			&\quad+2\Lambda\int_{Q_{\rho}}\int_{Q^c_{\rho}}\dfrac{|u(x)-u(y)|^{q-1}|\vp(x)|}{|x-y|^{n+sq}}\,dx\,dy\nonumber\\
			&=:I_1+I_2-I_3+I_4.
		\end{align}
First, by \eqref{eq:A} and Young's inequality, $I_0$ is estimated as
\begin{align*}
I_0\geq \int_{\Omega_{\rho}}|Du|^p\eta^{\ell}\,dx-\int_{\Omega_{\rho}}|Du|^{p-1}|Dg|\eta^{\ell}\,dx\geq\frac{1}{2}\int_{\Omega_{\rho}}|Du|^p\eta^{\ell}\,dx-c\int_{\Omega_{\rho}}|Dg|^p\eta^{\ell}\,dx.
\end{align*}
For $I_1$ and $I_2$, using the structure condition \eqref{eq:A}, the bound on $|D\eta|$ and Young's inequality, for any $\ep\in(0,1)$ we have
	\begin{align*}%\label{eq:cacc.b.2}
		I_1\leq \ep\int_{\Omega_{\rho}}|Du|^p\eta^\ell\,dx+c(\ep)\int_{\Omega_{\rho}}|D\eta|^p|u-g|^p\,dx\leq\ep\int_{\Omega_{\rho}}|Du|^p\eta^\ell\,dx+\dfrac{c(\ep)}{\rho^p}\int_{\Omega_{\rho}}|u-g|^p\,dx
	\end{align*}
	and
	\begin{align*}%\label{eq:cacc.b.3}
		I_2&\leq\int_{\Omega_{\rho}}|F|^{p-1}|D(u-g)|\eta^{\ell}\,dx+c\int_{\Omega_{\rho}}|F|^{p-1}\eta^{\ell-1}|D\eta||u-g|\,dx\\
  &\leq \ep\int_{\Omega_{\rho}}|D(u-g)|^p\eta^\ell\,dx+c(\ep)\int_{\Omega_{\rho}}|F|^p\eta^\ell\,dx+\dfrac{c}{\rho^p}\int_{\Omega_{\rho}}|u-g|^p\,dx.
	\end{align*}

	We next write
	\begin{align*}
		-I_3&=-\underbrace{\int_{Q_{\rho}}\int_{Q_{\rho}}|u(x)-u(y)|^{q-2}(u(x)-u(y))(\eta^\ell(x)\bar{u}(x)+\eta^\ell(y)\bar{u}(y))K(x,y)\,dx\,dy}_{=:I_{31}} \\
		&\quad+ \underbrace{\int_{Q_{\rho}}\int_{Q_{\rho}}|u(x)-u(y)|^{q-2}(u(x)-u(y))(\eta^\ell(x)\bar{g}(x)-\eta^\ell(y)\bar{g}(y))K(x,y)\,dx\,dy}_{=:I_{32}},
	\end{align*} 
	where we have set $\bar{u}:=u-(u)_\rho$ and $\bar{g}:=g-(u)_\rho$. To estimate the term $I_{31}$, we observe that
	\begin{align*}
		|u(x)-u(y)|^{q-2}&(u(x)-u(y))(\eta^\ell(x)\bar{u}(x)-\eta^\ell(y)\bar{u}(y)) \\
		&\geq |\bar{u}(x)-\bar{u}(y)|^{q}\frac{\eta^\ell(x)+\eta^\ell(y)}{2}- |\bar{u}(x)-\bar{u}(y)|^{q-1}\frac{|\bar{u}(x)+\bar{u}(y)|}{2}|\eta^\ell(x)-\eta^\ell(y)|.
	\end{align*}
	Now, using the estimate (note that $\ell\geq q$)
	\begin{align*}
		|\eta^\ell(x)-\eta^\ell(y)|\leq q \big(\eta^\ell(x)+\eta^\ell(y) \big)^{(q-1)/q}|\eta^\frac{\ell}{q}(x)-\eta^\frac{\ell}{q}(y)|
	\end{align*}
	together with Young's inequality, we conclude that
	\begin{align*}
		|\bar{u}(x)-\bar{u}(y)|^{q-1}\frac{|\bar{u}(x)+\bar{u}(y)|}{2}|\eta^\ell(x)-\eta^\ell(y)| &\leq \frac{1}{8} |\bar{u}(x)-\bar{u}(y)|^{q}\frac{\eta^\ell(x)+\eta^\ell(y)}{2} \nonumber\\
		&\quad+c (|\bar{u}(x)|+|\bar{u}(y)|)^q |\eta^\frac{\ell}{q}(x)-\eta^\frac{\ell}{q}(y)|^q.
	\end{align*}
	Thus,
	\begin{align}\label{eq:cacc.b.7}
		I_{31}&\geq \frac{7}{8}\int_{Q_{\rho}}\int_{Q_{\rho}} |\bar{u}(x)-\bar{u}(y)|^{q}\frac{\eta^\ell(x)+\eta^\ell(y)}{2}K(x,y)\,dx\,dy \nonumber\\
		&\quad-c \int_{Q_{\rho}}\int_{Q_{\rho}} (|\bar{u}(x)|+|\bar{u}(y)|)^q |\eta^\frac{\ell}{q}(x)-\eta^\frac{\ell}{q}(y)|^q K(x,y)\,dx\,dy.
	\end{align}
	On a similar account, we have
	\begin{align}\label{eq:cacc.b.9}
		|I_{32}|&\leq \frac{1}{8}\int_{Q_{\rho}}\int_{Q_{\rho}} |\bar{u}(x)-\bar{u}(y)|^{q}\frac{\eta^\ell(x)+\eta^\ell(y)}{2}K(x,y)\,dxdy +c \int_{Q_{\rho}}\int_{Q_{\rho}} |\bar{g}(x)-\bar{g}(y)|^{q}K(x,y)\,dxdy \nonumber\\
		&\quad+c\int_{Q_{\rho}}\int_{Q_{\rho}} (|\bar{g}(x)|+|\bar{g}(y)|)^q |\eta^\frac{\ell}{q}(x)-\eta^\frac{\ell}{q}(y)|^q K(x,y)\,dx\,dy.
	\end{align}
 Coupling \eqref{eq:cacc.b.7} and \eqref{eq:cacc.b.9}, and noting $\eta\equiv 1$ on $Q_{\rho/2}$, with $c_{*}=c_{*}(\data)\geq 1$ and $c=c(\data)\geq 1$ we obtain
	\begin{align*}%\label{eq:cacc.b.4}
		\begin{split}
			-I_3\leq-\dfrac{1}{c_*}[u]^q_{W^{s,q}(Q_{\rho/2})}  +c[g]^q_{W^{s,q}(Q_{\rho})}
			+c\rho^{-sq}\left[\int_{\Omega_{\rho}}|u-g|^q\,dx+\int_{Q_{\rho}}|u-(u)_{\rho}|^q\,dx\right],
		\end{split}
	\end{align*}
 where we have additionally used the facts $|\eta^\frac{m}{q}(x)-\eta^\frac{m}{q}(y)|^q\leq c\rho^{-q}|x-y|^q$ in $Q_\rho\times Q_\rho$ and $u-g=0$ in $Q_\rho\cap\Omega^c$.

 Next, for $x\in Q_{2\rho/3}$ and $y\in Q^{c}_{\rho}$, we have
\begin{align*}%\label{eq:1st.l.7}
\begin{split}
|x-x_0|\leq 2\sqrt{2}\rho/3\quad\mbox{ and }\quad|y-x_0|\geq\rho,
\end{split}
\end{align*}
  which implies that
	\begin{align}\label{eq:1st.l.6}
		|x-y|\geq %|y-x_0|-|x-x_0|=
		|y-x_0|\left(1-\frac{|x-x_0|}{|y-x_0|}\right)
		\geq |y-x_0|\left(1-\dfrac{2\sqrt{2}\rho/3}{\rho}\right)=\dfrac{3-2\sqrt{2}}{3}|y-x_0|.
	\end{align}
 On account of the relation $B_{2\rho/3}\subset Q_{2\rho/3}$, we have 
	\begin{align*}
		\int_{Q^c_{2\rho/3}}\dfrac{dy}{|y-x_0|^{n+sq}}\leq	\int_{B^c_{2\rho/3}}\dfrac{dy}{|y-x_0|^{n+sq}}
		\lesssim \rho^{-sq}.
	\end{align*}
	Therefore, using the above two displays implies that
	\begin{align*}%\label{eq:cacc.b.5}
		I_4&\lesssim \rho^{-sq}\int_{Q_{\rho}}|u(x)-(u)_{\rho}|^{q-1}|u(x)-g(x)|\,dx +\int_{\ern\setminus Q_{\rho}}\dfrac{|u(y)-(u)_{\rho}|^{q-1}}{|y-x_0|^{n+sq}}\,dy\int_{Q_{\rho}}|u-g|\,dx \nonumber\\
			&\lesssim \rho^{-sq} \left[\int_{Q_{\rho}}|u-g|^q\,dx+\int_{Q_{\rho}}|u-(u)_{\rho}|^q\,dx\right]+ \left(\int_{Q_{\rho}}|u-g|\,dx\right) \int_{\ern\setminus Q_{\rho}}\dfrac{|u(y)-(u)_{\rho}|^{q-1}}{|y-x_0|^{n+sq}}\,dy.
		\end{align*}

Combining the resulting estimates for $I_0$ through $I_4$ with \eqref{eq:cacc.b.1} and choosing $\ep\in(0,1)$ sufficiently small, we obtain
	\begin{align}\label{eq:cacc.b.8}
		\int_{\Omega_{\rho/2}}|Du|^p\,dx&+[u]^q_{W^{s,q}(Q_{\rho/2})} \nonumber\\
		&\leq c\rho^{-p} \int_{\Omega_{\rho}}|u-g|^p\,dx +c\int_{\Omega_{\rho}}|Dg|^p\,dx+c\int_{\Omega_{\rho}}|F|^p\,dx+c[g]^q_{W^{s,q}(Q_{\rho})} \nonumber\\
		&\quad +c\rho^{-sq}\left(\int_{Q_{\rho}}|u-(u)_{\rho}|^q\,dx+\int_{\Omega_{\rho}}|u-g|^q\,dx\right)\nonumber\\
		 &\quad+c\left(\int_{\Omega_{\rho}}|u-g|\,dx\right)\int_{\ern\setminus Q_{\rho}}\dfrac{|u(y)-(u)_{\rho}|^{q-1}}{|y-x_0|^{n+sq}}\,dy
		\end{align}
with $c=c(\data)\geq 1$. Finally, applying the following estimate
	\begin{align}\label{eq:b.10}
		\begin{split}
		\mean{Q_{\rho}}|u-(u)_{\rho}|^l\,dx&\leq c(n,l) \mean{Q_{\rho}}|g-(g)_{\rho}|^l\,dx+ c(n,l)\mean{\Omega_{\rho}}|u-g|^l\,dx\quad(\text{for any }l,\rho>0)
		\end{split}
	\end{align}
 in \eqref{eq:cacc.b.8}, we complete the proof of the lemma.
\end{proof}

Similar to the above lemma, we can obtain the following estimate which holds inside of the domain $\Omega$. The estimate is proved by testing $\varphi=\eta^{\ell}(u-(u)_{\frac{29\rho}{40}})\in\mathbb{X}_0(\Omega;\Omega')$ to \eqref{eq:bdary} and applying appropriately modified argument of the proof of Lemma \ref{lem:cacc.b}.

\begin{corollary}\label{cor:cacc.b.int}
Let $u\in\mathbb{X}_g(\Omega;\Omega')$ be the weak solution to problem \eqref{eq:bdary}. Then for any $\rho\in(0,1)$ and $x_0\in\Omega$ such that $Q_{3\rho/4}(x_0)\subset\Omega$, there holds	\begin{align}\label{eq:cacc.b.int}
\mean{Q_{\frac{\rho}{2}}}|Du|^p\,dx
 &\lesssim \rho^{-p}\mean{Q_{\frac{29\rho}{40}}}|u-(u)_\frac{29\rho}{40}|^p\,dx+\rho^{-sq}\mean{Q_{\frac{29\rho}{40}}}|u-(u)_\frac{29\rho}{40}|^q\,dx+\mean{Q_{\frac{29\rho}{40}}}|F|^p\,dx\nonumber\\
&\quad+\mean{Q_{\frac{29\rho}{40}}}|u-(u)_\frac{29\rho}{40}|\,dx\Bigg(\int_{\ern\setminus Q_{\frac{29\rho}{40}}}\dfrac{|u(y)-(u)_{\frac{29\rho}{40}}|^{q-1}}{|y-x_0|^{n+sq}}\,dy \Bigg)
\end{align}
with the implicit constant $c=c(\data)$.
\end{corollary}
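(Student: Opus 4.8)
The plan is to mimic the proof of Lemma~\ref{lem:cacc.b}, using the interior competitor $u-(u)_{29\rho/40}$ in place of $u-g$ and solid cylinders in place of the sets $\Omega_{(\cdot)}$. First I would fix $\eta\in C^1_0(\ern)$ with $\eta\equiv1$ on $Q_{\rho/2}$, $\supp\eta\subset Q_{b\rho}(x_0)$ for a fixed $b$ with $\tfrac12<b<\tfrac{29}{40\sqrt2}$ (such a $b$ exists precisely because $\tfrac1{\sqrt2}<\tfrac{29}{40}$), $0\le\eta\le1$ and $|D\eta|\le c/\rho$. With $\ell:=\max\{p,q\}$ and $\bar u:=u-(u)_{29\rho/40}$, I would test \eqref{eq:bdary} against $\vp:=\eta^\ell\bar u\in\mathbb{X}_0(\Omega;\Omega')$, which is admissible since $\supp\vp\subset Q_{b\rho}(x_0)\Subset Q_{3\rho/4}(x_0)\subset\Omega$. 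Because $\supp\vp\Subset Q_{29\rho/40}$, the double integral over $\ern\times\ern$ collapses to the part over $Q_{29\rho/40}\times Q_{29\rho/40}$ plus twice the cross part over $Q_{29\rho/40}\times(\ern\setminus Q_{29\rho/40})$, so the weak formulation becomes $I_0\le I_1+I_2-I_3+I_4$, with the five quantities defined exactly as in \eqref{eq:cacc.b.1} but with $\bar u$ and the radius $\tfrac{29}{40}\rho$ in place of $u-g$ and $\rho$.

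The five terms are then estimated as in Lemma~\ref{lem:cacc.b}. By the ellipticity in \eqref{eq:A} one gets $I_0\ge c^{-1}\int_{Q_{29\rho/40}}|Du|^p\eta^\ell\,dx$. Using $|A(x,z)|\le\Lambda|z|^{p-1}$ (from \eqref{eq:A}), $|D\eta|\le c/\rho$, $\eta^{\ell-p}\le1$ and Young's inequality, both $I_1$ and $I_2$ are bounded, for any $\ep\in(0,1)$, by $\ep\int_{Q_{29\rho/40}}|Du|^p\eta^\ell\,dx+c(\ep)\rho^{-p}\int_{Q_{29\rho/40}}|\bar u|^p\,dx+c(\ep)\int_{Q_{29\rho/40}}|F|^p\,dx$. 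For $-I_3$ I would reuse the pointwise algebraic inequality from the proof of Lemma~\ref{lem:cacc.b}; since there is no exterior datum here, only the $I_{31}$-type contribution appears (with $\bar u$ in both slots), and combined with the estimate $|\eta^{\ell/q}(x)-\eta^{\ell/q}(y)|^q\le c\rho^{-q}|x-y|^q$, \eqref{eq:K}, and the convergent bound $\int_{Q_{29\rho/40}}|x-y|^{q-sq-n}\,dy\le c\rho^{q-sq}$ (finite because $s<1$), it yields $-I_3\le-c^{-1}[u]^q_{W^{s,q}(Q_{\rho/2})}+c\rho^{-sq}\int_{Q_{29\rho/40}}|\bar u|^q\,dx$; as only a gradient bound is sought, the nonnegative seminorm is simply discarded.

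For the tail term $I_4=2\Lambda\int_{Q_{29\rho/40}}\int_{\ern\setminus Q_{29\rho/40}}\tfrac{|u(x)-u(y)|^{q-1}|\vp(x)|}{|x-y|^{n+sq}}\,dx\,dy$, I would argue as in \eqref{eq:1st.l.6}: for $x\in\supp\eta\subset Q_{b\rho}(x_0)$ and $y\notin Q_{29\rho/40}$ we have $|x-x_0|\le\sqrt2\,b\rho$ and $|y-x_0|\ge\tfrac{29}{40}\rho$, hence $|x-y|\ge(1-\tfrac{40\sqrt2\,b}{29})|y-x_0|$ with a strictly positive constant by the choice of $b$; then, splitting $|u(x)-u(y)|^{q-1}\lesssim|\bar u(x)|^{q-1}+|\bar u(y)|^{q-1}$ and using $\int_{\ern\setminus Q_{29\rho/40}}|x-y|^{-n-sq}\,dy\lesssim\rho^{-sq}$, one obtains $I_4\lesssim\rho^{-sq}\int_{Q_{29\rho/40}}|\bar u|^q\,dx+\big(\int_{Q_{29\rho/40}}|\bar u|\,dx\big)\int_{\ern\setminus Q_{29\rho/40}}\tfrac{|\bar u(y)|^{q-1}}{|y-x_0|^{n+sq}}\,dy$. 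Inserting these estimates into $I_0\le I_1+I_2-I_3+I_4$, choosing $\ep$ small to absorb $\ep\int|Du|^p\eta^\ell$ on the left, using $\eta\equiv1$ on $Q_{\rho/2}$, and dividing through by $|Q_{\rho/2}|\simeq|Q_{29\rho/40}|\simeq\rho^n$ gives \eqref{eq:cacc.b.int}.

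The one genuinely delicate point is the tail term: the cut-off must be supported in a cylinder of radius strictly below $\tfrac{29}{40\sqrt2}\rho$ so that the separation estimate $|x-y|\gtrsim|y-x_0|$ survives for $y$ just outside $Q_{29\rho/40}$, while still being identically $1$ on $Q_{\rho/2}$; these two requirements are compatible exactly because $\tfrac1{\sqrt2}<\tfrac{29}{40}$, which is why the radius $\tfrac{29}{40}\rho$ (rather than $\tfrac\rho2$) is built into the statement. A secondary point of care is that the nonlocal commutator term in $-I_3$ must be controlled with the weight $\rho^{-sq}$, not $\rho^{-q}$, which is exactly where the strict inequality $s<1$ (i.e.\ $q(1-s)>0$) enters.
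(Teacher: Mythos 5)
Your proposal is correct and reproduces the approach the paper sketches (the paper only gives a one-line remark before Corollary~\ref{cor:cacc.b.int}, stating that one tests with $\varphi=\eta^{\ell}(u-(u)_{29\rho/40})$ and modifies the proof of Lemma~\ref{lem:cacc.b}). You have filled in the details correctly, and in particular you have identified and verified the one genuinely delicate numerical point that the paper silently relies on: the cut-off must have support in a cylinder $Q_{b\rho}$ with $\tfrac12<b<\tfrac{29}{40\sqrt2}$, which is a nonempty (though tight) window precisely because $\tfrac1{\sqrt2}<\tfrac{29}{40}$, and this is needed so that the separation estimate $|x-y|\gtrsim|y-x_0|$ holds for $x\in\supp\eta$ and $y\notin Q_{29\rho/40}$ when the analogue of \eqref{eq:1st.l.6} is applied to the tail term $I_4$. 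All five term estimates, including the use of $s<1$ to get the weight $\rho^{-sq}$ in $-I_3$ and the splitting of $I_4$ into the two pieces appearing on the right of \eqref{eq:cacc.b.int}, match what the paper's argument produces.
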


Now we can obtain the following reverse H\"{o}lder inequality. 
\begin{lemma}\label{lem:rev.hold}
 Let $\Omega$ be an open set satisfying \eqref{eq:m.den1} with $R_0>0$ and let $u\in \mathbb{X}_{g}(\Omega;\Omega')\cap \mathbb{X}(\Omega)$ be a weak solution to problem \eqref{eq:bdary} with $g\in \mathbb{X}_{\loc}(\Omega')$. Assume that $\rho\in(0,\min\{1/8,R_0\})$, $x_0\in\Omega$ and $L\in\mathbb{N}$ satisfies $Q_{2^{L+1}\rho}(x_0)\Subset\Omega'$ with $2^L\rho\leq 1$. Then for each $\nu\in (0,1)$, there exists a constant $c=c(\data,\mathcal{M}(u;\Omega),\mathcal{M}(g;Q_{2^L\rho}(x_0)),\nu)$ such that
	\begin{align}\label{eq:rev.hol.b}
		&\mean{\Omega_{\frac{\rho}{2}}}|Du|^p\,dx+|\Omega_{\frac{\rho}{2}}|^{-1}[u]^q_{W^{s,q}(Q_{\frac{\rho}{2}})}\nonumber\\
			&\quad\leq c\left(\mean{\Omega_{\rho}}|Du|^{p\sigma}\,dx\right)^{\frac{1}{\sigma}}+c\mean{\Omega_{\rho}}|Dg|^p\,dx+c\mean{\Omega_{\rho}}(|F|^p+1)\,dx \nonumber\\
			&\quad\quad+\nu\left[ \sum^{L+1}_{k=1}(2^k)^{\theta-\frac{q\tau}{q-1}} \left(\left(\mean{\Omega_{2^{k}\rho}}|Du|^{p\sigma}\,dx\right)^{\frac{\theta}{p\sigma}}+\left(\mean{Q_{2^{k}\rho}}|Dg|^{p}\,dx\right)^{\frac{\theta}{p}}\right) \right]^\frac{p}{\theta}\nonumber\\
			&\quad\quad+\nu\left(\rho^{1-\varrho}\int_{\ern\setminus Q_{2^L\rho}}\dfrac{|u(y)-(u)_{2^L\rho}|^{q-1}}{|y-x_0|^{n+sq}}\,dy\right)^\frac{p}{m-1}
	\end{align}
	holds for all $\sigma\in[\frac{n}{n+p}, 1]$ and $\varrho\in(0,m-qs)$ with $m:=\min\{p,q\}$, where $\tau$ is as in \eqref{eq:tau}.
 \end{lemma}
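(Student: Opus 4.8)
The plan is to derive the reverse Hölder inequality from the Caccioppoli inequalities already at hand — Lemma~\ref{lem:cacc.b} near the boundary and Corollary~\ref{cor:cacc.b.int} in the interior — by absorbing every term on their right‑hand sides into the asserted form, the genuinely new difficulty being the nonlocal tail. I would first split into two regimes according to whether $Q_{3\rho/4}(x_0)\subset\Omega$ or not. If $Q_{3\rho/4}(x_0)\subset\Omega$ I apply Corollary~\ref{cor:cacc.b.int} (so every cylinder occurring in it sits inside $\Omega$, and e.g. $\mean{Q_{29\rho/40}}|Du|^{p\sigma}\lesssim\mean{\Omega_\rho}|Du|^{p\sigma}$), and recover the missing seminorm $|\Omega_{\rho/2}|^{-1}[u]^q_{W^{s,q}(Q_{\rho/2})}$ a posteriori from the gradient bound through Lemma~\ref{lem:embed2}, using $\theta>s$, $\rho<1$, the $L^\infty$ bound on $u$ and a Young step. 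If $Q_{3\rho/4}(x_0)\not\subset\Omega$ I use Lemma~\ref{lem:cacc.b}, whose left‑hand side already carries the Gagliardo seminorm; this is precisely the configuration in which the boundary Sobolev–Poincaré inequality Lemma~\ref{lem:embed3} applies to $u-g\in W^{1,p}_0(\Omega)$ on $\Omega_\rho$ (with $2r=\rho$ there).

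For the local terms I would proceed uniformly. The term $\rho^{-p}\mean{\Omega_\rho}|u-g|^p$ (respectively $\rho^{-p}\mean{Q_{29\rho/40}}|u-(u)_{29\rho/40}|^p$ in the interior case) is treated by Lemma~\ref{lem:embed3} (respectively Lemma~\ref{lem:SP}) with exponent $p\sigma$, using $(p\sigma)^*\ge p$ for $\sigma\in[\tfrac{n}{n+p},1]$, followed by $|D(u-g)|^{p\sigma}\lesssim|Du|^{p\sigma}+|Dg|^{p\sigma}$ and Hölder, which yields $\big(\mean{\Omega_\rho}|Du|^{p\sigma}\big)^{1/\sigma}+\mean{\Omega_\rho}|Dg|^p$. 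The terms carrying a $\rho^{-sq}$ weight — namely $\rho^{-sq}\mean{\Omega_\rho}|u-g|^q$, $\rho^{-sq}\mean{Q_\rho}|g-(g)_\rho|^q$, and in the interior case $\rho^{-sq}\mean{Q_{29\rho/40}}|u-(u)_{29\rho/40}|^q$ — are handled by Corollary~\ref{cor:emb1}: when $q\le p$ one has $\theta=1$ and closes as before, while when $q>p$ one uses the $L^\infty$ bounds on $u$ and on $g$ (absorbed into the constant through $\mathcal{M}(u;\Omega)$ and $\mathcal{M}(g;Q_{2^L\rho})$) and the fact that $p>sq$, $\rho<1$ make the prefactor $\rho^{q(\theta-s)}$ harmless; the seminorm $[g]^q_{W^{s,q}(Q_\rho)}$ is reduced the same way via Lemma~\ref{lem:embed2}. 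Since $\mean{\Omega_\rho}|Dg|^p$ and $\mean{\Omega_\rho}|F|^p$ are already admissible, after this step only the tail term $\mean{\Omega_\rho}|u-g|\int_{\ern\setminus Q_\rho}\frac{|u(y)-(u)_\rho|^{q-1}}{|y-x_0|^{n+sq}}\,dy$ remains.

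The tail term is the heart of the argument. I would bound the prefactor $\mean{\Omega_\rho}|u-g|\lesssim\rho\big[(\mean{\Omega_\rho}|Du|^{p\sigma})^{1/(p\sigma)}+(\mean{\Omega_\rho}|Dg|^p)^{1/p}\big]$ (again Lemma~\ref{lem:embed3}, or Lemma~\ref{lem:SP} on $Q_{29\rho/40}$), and decompose $\ern\setminus Q_\rho=\bigcup_{k=0}^{L-1}\big(Q_{2^{k+1}\rho}\setminus Q_{2^k\rho}\big)\cup(\ern\setminus Q_{2^L\rho})$. On the $k$‑th ring one has $|y-x_0|\ge2^k\rho$, so it contributes $\lesssim(2^k\rho)^{-sq}\mean{Q_{2^{k+1}\rho}}|u-(u)_\rho|^{q-1}$, while on $\ern\setminus Q_{2^L\rho}$ I replace $(u)_\rho$ by $(u)_{2^L\rho}$, the error being absorbed as one extra $k=L$ ring. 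I would then control each $\mean{Q_{2^{k+1}\rho}}|u-(u)_\rho|^{q-1}$ by telescoping the means $(u)_{2^j\rho}$ across the dyadic scales $j\le k+1$ and applying Corollary~\ref{cor:emb1} at each scale — the $L^\infty$ norm of $u$ on $Q_{2^j\rho}$ being bounded by $\mathcal{M}(u;\Omega)+\mathcal{M}(g;Q_{2^L\rho})$ and swallowed by the constant — after splitting $\mean{Q_{2^j\rho}}|Du|^{p\sigma}\lesssim\mean{\Omega_{2^j\rho}}|Du|^{p\sigma}+\mean{Q_{2^j\rho}}|Dg|^p$ by the measure density property \eqref{eq:m.den1}. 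Writing $\tilde a_j:=(\mean{\Omega_{2^j\rho}}|Du|^{p\sigma})^{\theta/(p\sigma)}+(\mean{Q_{2^j\rho}}|Dg|^p)^{\theta/p}$, this turns the $k$‑th ring into $\lesssim(2^k\rho)^{-sq}\big(\sum_{j=1}^{k+1}(2^j\rho)^{\theta}\tilde a_j\big)^{q-1}$.

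Multiplying by the prefactor and summing over $k$, it remains to run a chain of Young's inequalities in which the interpolation exponent $\tau$ — with $s<\tau<\theta$ and $\tau>\tfrac{q-1}{q}\theta$, so that $\theta-\tfrac{q\tau}{q-1}<0$ — is used to redistribute the weights $(2^j\rho)^{\theta}$ against the ring decay $(2^k\rho)^{-sq}$. The contributions charged to the scale $\rho$ should get dominated by $\big(\mean{\Omega_\rho}|Du|^{p\sigma}\big)^{1/\sigma}+\mean{\Omega_\rho}|Dg|^p+1$, the additive constant being produced by a final, sub‑linear application of Young's inequality (this is where the $+1$ comes from, via $p/(p-m+1)<p$ when $m<p$); the contributions at the larger dyadic scales should assemble into $\nu\big[\sum_{k=1}^{L+1}(2^k)^{\theta-q\tau/(q-1)}\big((\mean{\Omega_{2^k\rho}}|Du|^{p\sigma})^{\theta/(p\sigma)}+(\mean{Q_{2^k\rho}}|Dg|^p)^{\theta/p}\big)\big]^{p/\theta}$; and the far part should become $\nu\big(\rho^{1-\varrho}\int_{\ern\setminus Q_{2^L\rho}}\frac{|u(y)-(u)_{2^L\rho}|^{q-1}}{|y-x_0|^{n+sq}}\,dy\big)^{p/(m-1)}$ — the freedom in choosing the Young exponents being exactly what puts the small factor $\nu$ on these last two terms while letting the remaining constants blow up. Combining the two regimes then gives the claim, with the constant depending only on $\data$, $\mathcal{M}(u;\Omega)$, $\mathcal{M}(g;Q_{2^L\rho}(x_0))$ and $\nu$. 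I expect the main obstacle to be precisely this last bookkeeping: carrying the exact powers of $\rho$ and of $2^k$ through the telescoping and the successive Young's inequalities so that each piece lands where it should, invoking $p>sq$, $\theta>s$ and the admissible range of $\tau$ at the right moments to keep all $\rho$‑exponents nonnegative and all geometric series in $2^k$ under control, and isolating $\nu$ on the correct two terms.
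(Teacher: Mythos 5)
Your proposal follows essentially the same route as the paper's proof: a case split on whether $Q_{3\rho/4}(x_0)\subset\Omega$, the boundary Caccioppoli estimate of Lemma~\ref{lem:cacc.b} (respectively Corollary~\ref{cor:cacc.b.int} in the interior), Sobolev--Poincar\'e via Lemma~\ref{lem:embed3} (respectively Lemma~\ref{lem:SP} and Corollary~\ref{cor:emb1}) for the local terms, a dyadic annulus decomposition of the tail with telescoping of the averages $(u)_{2^j\rho}$, and a final chain of Young's inequalities governed by the exponent $\tau$ of \eqref{eq:tau} to isolate $\nu$ on the tail contributions. Your observation that the $W^{s,q}$ seminorm can be recovered a posteriori through Lemma~\ref{lem:embed2} in the interior regime is fine.

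There is, however, one concrete slip. You propose the degree-one prefactor bound
\[
\mean{\Omega_\rho}|u-g|\,dx\;\lesssim\;\rho\Big[\big(\mean{\Omega_\rho}|Du|^{p\sigma}\,dx\big)^{\frac{1}{p\sigma}}+\big(\mean{\Omega_\rho}|Dg|^{p}\,dx\big)^{\frac{1}{p}}\Big]
\]
and multiply it against the \emph{entire} tail. For the far piece $\big(\int_{\ern\setminus Q_{2^L\rho}}\dots\big)$ this is adequate (Young with exponents $(m,m')$ leaves $a^m$ with $m\le p$, absorbable). But for the near piece, after the annulus sum is packaged as $\big(\sum_k c_k\,b_k^{\theta}\big)^{q-1}$ with $b_k:=(\mean_{\Omega_{2^k\rho}}|Du|^{p\sigma})^{1/(p\sigma)}+(\mean_{Q_{2^k\rho}}|Dg|^{p})^{1/p}$, the Young split that produces $\nu\big(\sum_k c_k\,b_k^{\theta}\big)^{p/\theta}$ forces the conjugate exponent $\alpha'=\frac{p}{p-\theta(q-1)}$ on the prefactor. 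When $q>p$ one has $\theta=p/q$, hence $p-\theta(q-1)=p/q$ and $\alpha'=q>p$, so your degree-one prefactor yields $c(\nu)\,a^{q}$, which cannot be dominated by $c(\nu)\big(\mean_{\Omega_\rho}|Du|^{p\sigma}\big)^{1/\sigma}=c(\nu)\,a^{p}$. The paper avoids this by using, for the near piece, the interpolated bound of Corollary~\ref{cor:emb1}/Lemma~\ref{lem:embed3},
\[
\mean{\Omega_\rho}|u-g|\,dx\;\lesssim\;\rho^{\theta}\,\|u-g\|^{1-\theta}_{L^\infty(\Omega_\rho)}\Big[\big(\mean{\Omega_\rho}|Du|^{p\sigma}\,dx\big)^{\frac{\theta}{p\sigma}}+\big(\mean{\Omega_\rho}|Dg|^{p}\,dx\big)^{\frac{\theta}{p}}\Big],
\]
which has degree $\theta$ in $Du$. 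With this, $\alpha'=\frac{p}{p-\theta(q-1)}$ applied to a degree-$\theta$ quantity gives total degree $\theta\alpha'=\theta q\le p$ on $a$, which is absorbable; the $\|u-g\|_{L^\infty}$ factor is swallowed into $\mathcal{M}$. So you should use two different prefactor estimates: the degree-$\theta$ one against $\big(\sum_k\big)^{q-1}$ and the degree-one one against the far-field integral, exactly as in \eqref{eq:rev.hol.b.19}--\eqref{eq:rev.hol.b.18}. With that adjustment your argument goes through.
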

\begin{proof}
To prove the lemma, we will employ the Caccioppoli type inequality and the Sobolev-Poincar\'{e} inequality for two different cases: $Q_{\frac{3}{4}\rho}(x_0)\not\subset\Omega$ or $Q_{\frac{3}{4}\rho}(x_0)\subset\Omega$.\\
\textbf{Case 1}: If $Q_{\frac{3}{4}\rho}(x_0)\not\subset\Omega$ holds, then we estimate the right-hand side quantity of \eqref{eq:cacc.b} as below, in order to use Lemma \ref{lem:cacc.b}.  
 Using Corollary \ref{cor:emb1} and Lemma \ref{lem:embed2}, we have
\begin{align}\label{eq:rev.hol.b.1}
\rho^{-sq}\mean{Q_{\rho}}|g-(g)_{\rho}|^q\,dx+
\mean{Q_\rho}\int_{Q_\rho}\frac{|g(x)-g(y)|^q}{|x-y|^{n+sq}}\,dx\,dy\lesssim \rho^{\theta q-sq} \|g\|_{L^{\infty}(Q_{\rho})}^{q(1-\theta)}\left(\mean{Q_{\rho}}|Dg|^p\,dx\right)^{\frac{\theta q}{p}}.
\end{align}
%	\begin{align*}
%	[g]^q_{s,q,Q_{\rho}}|\Omega_{\rho}|^{-1}\lesssim\|g\|_{L^{\infty}(Q_{\rho})}^{q(1-\theta)}\rho^{q\theta-sq}\left(\mean{Q_{\rho}}|Dg|^p\,dx\right)^{\frac{\theta q}{p}}
%	\end{align*}
On a similar account, using Lemma \ref{lem:embed3}, for all $\frac{n}{n+p}\leq\sigma\leq 1$ we get 
\begin{align}\label{eq:rev.hol.b.2}
\rho^{-p}\mean{\Omega_{\rho}}|u-g|^p\,dx\lesssim\left(\mean{\Omega_{\rho}}|Du-Dg|^{p\sigma}\,dx\right)^{\frac{1}{\sigma}}\lesssim\left(\mean{\Omega_{\rho}}|Du|^{p\sigma}\,dx\right)^{\frac{1}{\sigma}}+\mean{\Omega_{\rho}}|Dg|^p\,dx
\end{align}
and
\begin{align}\label{eq:rev.hol.b.3}
\rho^{-sq}\mean{\Omega_{\rho}}|u-g|^q\,dx&\lesssim \rho^{q\theta-sq} \|u-g\|^{q(1-\theta)}_{L^{\infty}(\Omega_{\rho})}\left(\mean{\Omega_{\rho}}|Du-Dg|^{p\sigma}\,dx\right)^{\frac{\theta q}{p\sigma}}\nonumber\\
&\lesssim \rho^{q\theta-sq} \left[\left(\mean{\Omega_{\rho}}|Du|^{p\sigma}\,dx\right)^{\frac{\theta q}{p\sigma}}+\left(\mean{\Omega_{\rho}}|Dg|^{p}\,dx\right)^{\frac{\theta q}{p}}\right].
\end{align}
% 	where for the second inequality, we have additionally used the relation 
% 	\begin{align*}
% 		\mean{\Omega_{\rho}}|u-g|^q\,dx\lesssim \|u-g\|_{L^\infty(\Omega_\rho)}^{q-p} \mean{\Omega_{\rho}}|u-g|^p\,dx, \quad\mbox{if }q>p.
% 	\end{align*}

	Next, set
	\begin{align}\label{eq:rev.hol.b.10}
		I:=\left(\mean{\Omega_{\rho}}|u-g|\,dx\right)\cdot \tilde{I}^{q-1}\quad\mbox{with}\quad
		\tilde{I}:=\left(\int_{\ern\setminus Q_{\rho}} \dfrac{|u(y)-(u)_{\rho}|^{q-1}}{|y-x_0|^{n+sq}}\,dy\right)^\frac{1}{q-1}.
	\end{align}
	Using Minkowski type inequality, we deduce that 
	\begin{align}\label{eq:rev.hol.b.11}
		\tilde{I}&\lesssim \sum^{L-1}_{k=0} \left(\int_{Q_{2^{k+1}\rho}\setminus Q_{2^{k}\rho}}\dfrac{|u(y)-(u)_{\rho}|^{q-1}}{|y-x_0|^{n+sq}}\,dy\right)^\frac{1}{q-1}+\left(\int_{\ern\setminus Q_{2^L\rho}}\dfrac{|u(y)-(u)_{\rho}|^{q-1}}{|y-x_0|^{n+sq}}\,dy\right)^\frac{1}{q-1} \nonumber\\
			&
			\lesssim\sum^{L-1}_{k=0}(2^k\rho)^\frac{-sq}{q-1}\left(\mean{Q_{2^{k+1}\rho}}|u(y)-(u)_{\rho}|^{q-1}\,dy\right)^\frac{1}{q-1}+\left(\int_{\ern\setminus Q_{2^L\rho}}\dfrac{|u(y)-(u)_{2^L\rho}|^{q-1}}{|y-x_0|^{n+sq}}\,dy\right)^\frac{1}{q-1}\nonumber\\
			&\quad+(2^L\rho)^\frac{-sq}{q-1}\sum^{L-1}_{k=0}|(u)_{2^{k+1}\rho}-(u)_{2^k\rho}|\nonumber\\
			%&\lesssim\sum^{L-1}_{k=1}(3^kR)^{-sq}\left[|(u)_{3^{k+1}R}-(u)_{3^kR}|^{q-1}+\mean{Q_{3^{k+1}R}}|u(y)-(u)_{3R}|^{q-1}\,dy\right]\\
			%&\quad+\int_{\ern\setminus Q_{3^LR}}\dfrac{|u(y)-(u)_{3^LR}|^{q-1}}{|y-x_0|^{n+sq}}\,dy\\
			&\lesssim\sum^{L-1}_{k=0}(2^k\rho)^\frac{-sq}{q-1}\left[\left(\mean{Q_{2^{k+1}\rho}}|u(y)-(u)_{\rho}|^{q-1}\,dy\right)^\frac{1}{q-1}+\left(\mean{Q_{2^{k+1}\rho}}|u(y)-(u)_{2^{k+1}\rho}|^{q-1}\,dy\right)^\frac{1}{q-1}\right]\nonumber\\
			&\quad+\left(\int_{\ern\setminus Q_{2^L\rho}}\dfrac{|u(y)-(u)_{2^L\rho}|^{q-1}}{|y-x_0|^{n+sq}}\,dy\right)^\frac{1}{q-1}.
		\end{align}
 Furthermore, we observe that
	\begin{align}\label{eq:rev.hol.b.12}
		&\sum^{L-1}_{k=0}(2^k\rho)^\frac{-sq}{q-1}\left(\mean{Q_{2^{k+1}\rho}}|u(y)-(u)_{\rho}|^{q-1}\,dy\right)^\frac{1}{q-1} \nonumber\\
		&\quad\lesssim\sum^{L-1}_{k=0}(2^k\rho)^\frac{-sq}{q-1} \left[\sum^k_{j=1}|(u)_{2^{j+1}\rho}-(u)_{2^j\rho}|+\left(\mean{Q_{2^{k+1}\rho}}|u(y)-(u)_{2^{k+1}\rho}|^{q-1}\,dy\right)^\frac{1}{q-1}\right] \nonumber\\
		 &\quad\lesssim \sum^{L-1}_{k=0}(2^k\rho)^\frac{-sq}{q-1} \sum^k_{j=1}\left(\mean{Q_{2^{j+1}\rho}}|u(y)-(u)_{2^{j+1}\rho}|^{q-1}\,dy\right)^\frac{1}{q-1} \nonumber\\
			%&\quad\lesssim \sum^{L-1}_{j=0}\left(\mean{Q_{2^{j+1}\rho}}|u(y)-(u)_{2^{j+1}\rho}|^{q-1}\,dy\right)^\frac{1}{q-1}\left(\sum^{L-1}_{k=j}(2^k\rho)^\frac{-sq}{q-1}\right)\\
			&\quad\lesssim \sum^{L-1}_{j=0}(2^j\rho)^\frac{-sq}{q-1} \left(\mean{Q_{2^{j+1}\rho}}|u(y)-(u)_{2^{j+1}\rho}|^{q-1}\,dy\right)^\frac{1}{q-1}.
	\end{align}
Thus, combining \eqref{eq:rev.hol.b.11} and \eqref{eq:rev.hol.b.12} together with the observations $\tau>s$ (thanks to \eqref{eq:tau}) and $2^L\rho\leq 1$, we obtain
	%\eqref{eq:rev.hol.b.11} and Corollary \ref{cor:emb1} yield
	\begin{align*}
		\tilde{I}&\lesssim\sum^{L-1}_{k=0}(2^k\rho)^\frac{-q\tau}{q-1} \left(\mean{Q_{2^{k+1}\rho}}|u(y)-(u)_{2^{k+1}\rho}|^{q-1}\,dy\right)^\frac{1}{q-1}+ \left(\int_{\ern\setminus Q_{2^L\rho}}\dfrac{|u(y)-(u)_{2^L\rho}|^{q-1}}{|y-x_0|^{n+sq}}\,dy\right)^\frac{1}{q-1} \nonumber\\
		&\lesssim \sum^{L-1}_{k=0}(2^k\rho)^\frac{-q\tau}{q-1} \left[\left(\mean{\Omega_{2^{k+1}\rho}}|u(y)-g(y)|^q\,dy\right)^{\frac{1}{q}}+ \left(\mean{Q_{2^{k+1}\rho}}|g(y)-(g)_{2^{k+1}\rho}|^q\,dy\right)^{\frac{1}{q}}\right] \nonumber\\
		&\quad\quad+\left(\int_{\ern\setminus Q_{2^L\rho}}\dfrac{|u(y)-(u)_{2^L\rho}|^{q-1}}{|y-x_0|^{n+sq}}\,dy\right)^\frac{1}{q-1},
	\end{align*}
	where on the last line we have used the relation \eqref{eq:b.10}. Proceeding similar to \eqref{eq:rev.hol.b.1} and \eqref{eq:rev.hol.b.3}, we conclude that 
	\begin{align}\label{eq:rev.hol.b.13}
		\tilde{I}&\lesssim \sum^{L-1}_{k=0}(2^k\rho)^{\theta-\frac{q\tau}{q-1}} \left[\left(\mean{\Omega_{2^{k+1}\rho}}|Du|^{p\sigma}\,dx\right)^{\frac{\theta}{p\sigma}}+\left(\mean{Q_{2^{k+1}\rho}}|Dg|^{p}\,dx\right)^{\frac{\theta}{p}}\right] \nonumber\\
		&\quad\quad+\left(\int_{\ern\setminus Q_{2^L\rho}}\dfrac{|u(y)-(u)_{2^L\rho}|^{q-1}}{|y-x_0|^{n+sq}}\,dy\right)^\frac{1}{q-1}.
	\end{align} 
	%Moreover, as in \eqref{eq:rev.hol.b.3}, we have
	%\begin{align}\label{eq:rev.hol.b.16}
	%	\mean{Q_{2\rho}}|u(x)-g(x)|\,dx\leq\left(\mean{Q_{2\rho}}|u(x)-g(x)|^q\,dx\right)^{\frac{1}{q}}\lesssim \rho^{\theta}\|u\|^{1-\theta}_{L^{\infty}(Q_{2\rho})}\left(
	%	\mean{Q_{2\rho}}|Du|^p\,dx\right)^{\frac{\theta}{p}}.
	%\end{align}

Recalling the definition of $I$ from \eqref{eq:rev.hol.b.10} and using  \eqref{eq:rev.hol.b.13} with \eqref{eq:rev.hol.b.2} and \eqref{eq:rev.hol.b.3}, we have
	\begin{align}\label{eq:rev.hol.b.19}
		I&\lesssim \rho^{-q\tau+q\theta}\left[\left(\mean{\Omega_{\rho}}|Du|^{p\sigma}\,dx\right)^{\frac{\theta}{p\sigma}}+\left(\mean{\Omega_{\rho}}|Dg|^{p}\,dx\right)^{\frac{\theta}{p}}\right]\nonumber\\
		&\qquad\times
		\left( \sum^{L-1}_{k=0}
         (2^k)^{\theta-\frac{q\tau}{q-1}} 
\left[\left(\mean{\Omega_{2^{k+1}\rho}}|Du|^{p\sigma}\,dx\right)^{\frac{\theta}{p\sigma}}+\left(\mean{Q_{2^{k+1}\rho}}|Dg|^{p}\,dx\right)^{\frac{\theta}{p}}\right]
		\right)^{q-1} \nonumber\\ &\quad+\rho\left[\left(\mean{\Omega_{\rho}}|Du|^{p\sigma}\,dx\right)^{\frac{1}{p\sigma}}+\left(\mean{\Omega_{\rho}}|Dg|^p\,dx\right)^{\frac{1}{p}}\right]\left(\int_{\ern\setminus Q_{2^L\rho}}\dfrac{|u(y)-(u)_{2^L\rho}|^{q-1}}{|y-x_0|^{n+sq}}\,dy\right).
	\end{align}
 Here, by virtue of Young's inequality, for $\nu\in(0,1)$ arbitrary  and $\varrho<m-qs$ with $m=\min\{p,q\}$, we deduce that
 \begin{align}\label{eq:rev.hol.b.18}
     &\rho\left[\left(\mean{\Omega_{\rho}}|Du|^{p\sigma}\,dx\right)^{\frac{1}{p\sigma}}+\left(\mean{\Omega_{\rho}}|Dg|^p\,dx\right)^{\frac{1}{p}}\right]\left(\int_{\ern\setminus Q_{2^L\rho}}\dfrac{|u(y)-(u)_{2^L\rho}|^{q-1}}{|y-x_0|^{n+sq}}\,dy\right) \nonumber\\
     &\leq c(\nu) \rho^{\varrho m}\left[ \Big(\mean{\Omega_{\rho}}|Du|^{p\sigma}\,dx\Big)^\frac{1}{\sigma}+\mean{\Omega_{\rho}}|Dg|^{p}\,dx\right]^\frac{m}{p} + \nu \left(\rho^{1-\varrho}\int_{\ern\setminus Q_{2^L\rho}}\dfrac{|u(y)-(u)_{2^L\rho}|^{q-1}}{|y-x_0|^{n+sq}}\,dy\right)^{m'} \nonumber\\
     &\leq c(\nu)\left[1+\Big(\mean{\Omega_{\rho}}|Du|^{p\sigma}\,dx\Big)^\frac{1}{\sigma}+\mean{\Omega_{\rho}}|Dg|^{p}\,dx\right]+\nu \left(\rho^{1-\varrho}\int_{\ern\setminus Q_{2^L\rho}}\dfrac{|u(y)-(u)_{2^L\rho}|^{q-1}}{|y-x_0|^{n+sq}}\,dy\right)^\frac{p}{m-1}.
 \end{align}
 %where on the last line we used the fact that $m\leq p$.
 Now combining \eqref{eq:rev.hol.b.19} and \eqref{eq:rev.hol.b.18}, and using Young's inequality once again (for the first term on the right side of \eqref{eq:rev.hol.b.19}), we obtain
 \begin{align}\label{eq:rev.hol.b.17}
     I&\leq \nu \left[ \sum^{L}_{k=1}(2^k)^{\theta-\frac{q\tau}{q-1}} \left(\left(\mean{\Omega_{2^{k}\rho}}|Du|^{p\sigma}\,dx\right)^{\frac{\theta}{p\sigma}}+\left(\mean{Q_{2^{k}\rho}}|Dg|^{p}\,dx\right)^{\frac{\theta}{p}}\right) \right]^\frac{p}{\theta} \nonumber\\
    &\quad+\nu \left(\rho^{1-\varrho}\int_{\ern\setminus Q_{2^L\rho}}\dfrac{|u(y)-(u)_{2^L\rho}|^{q-1}}{|y-x_0|^{n+sq}}\,dy\right)^\frac{p}{m-1}\nonumber\\
    &\quad+c(\nu)\left[1+\left(\mean{\Omega_{\rho}}|Du|^{p\sigma}\,dx\right)^\frac{1}{\sigma}+\mean{\Omega_{\rho}}|Dg|^{p}\,dx\right].
 \end{align}
 % where we have also used the fact that {\color{red}$\sum^{\infty}_{k=0}(2^k)^{\theta-\frac{q\tau}{q-1}}:=c(\data)<\infty$ (thanks to \eqref{eq:tau}).} {\color{magenta} Where do you use it?} {\color{blue}It seems I'm not using it but I will carefully examine it once again.}
 Collecting \eqref{eq:rev.hol.b.1}, \eqref{eq:rev.hol.b.2}, \eqref{eq:rev.hol.b.3} and \eqref{eq:rev.hol.b.17} in \eqref{eq:cacc.b} and using Young's inequality,  we get the required result of \eqref{eq:rev.hol.b} in this case.

 \noindent\textbf{Case 2}: If $Q_{\frac{3}{4}\rho}(x_0)\subset\Omega$ holds, then we estimate the right-hand side display of \eqref{eq:cacc.b.int} as below, in order to use Corollary \ref{cor:cacc.b.int}. Since $Q_\frac{29\rho}{40}\Subset\Omega_\rho\subset\Omega$, the first two terms on the right-hand side of \eqref{eq:cacc.b.int} can be estimated directly by applying the Sobolev-Poincar\'{e} type inequality of Lemma \ref{lem:SP} and Corollary \ref{cor:emb1}. 
% to estimate
% \begin{align*}
% &\rho^{-p}\mean{Q_{\frac{29\rho}{40}}}|u-(u)_\frac{29\rho}{40}|^p\,dx\leq \Big(\mean{Q_{\frac{29\rho}{40}}}|Du|^{p\sigma}\,dx\Big)^\frac{1}{\sigma}\quad\mbox{and}\nonumber\\ 
% &\rho^{-sq}\mean{Q_{\frac{29\rho}{40}}}|u-(u)_\frac{29\rho}{40}|^q\,dx\lesssim \rho^{q(\theta-s)} \Big(\mean{Q_{\frac{29\rho}{40}}}|Du|^{p\sigma}\,dx\Big)^\frac{\theta q}{p\sigma}.
% \end{align*}
For the nonlocal tail term, noting $Q_\frac{29\rho}{40}\subset Q_\rho$, an easy computation yields
\begin{align*}
\tilde{I}_{in}:=\Bigg(\int_{\ern\setminus Q_{\frac{29\rho}{40}}}\dfrac{|u(y)-(u)_{\frac{29\rho}{40}}|^{q-1}}{|y-x_0|^{n+sq}}\,dy \Bigg)^\frac{1}{q-1}\lesssim \rho^\frac{-qs}{q-1}\left(\mean{Q_{\rho}}|u-(u)_\rho|^{q-1}\,dx\right)^\frac{1}{q-1}+\tilde{I},
\end{align*}
where $\tilde{I}$ is as in \eqref{eq:rev.hol.b.10}. Then, following the argument of Case 1, we obtain
\begin{align}\label{eq:rev.hol.int.9}
\tilde{I}_{in}\leq \sum^{L}_{k=0}(2^k\rho)^\frac{-q\tau}{q-1} \left(\mean{Q_{2^{k}\rho}}|u(x)-(u)_{2^{k}\rho}|^{q-1}\,dx\right)^\frac{1}{q-1}+ \left(\int_{\ern\setminus Q_{2^L\rho}}\dfrac{|u(y)-(u)_{2^L\rho}|^{q-1}}{|y-x_0|^{n+sq}}\,dy\right)^\frac{1}{q-1}.
\end{align}
If $Q_{2^k\rho}\subset\Omega$ for some $k\leq L$, then by Corollary \ref{cor:emb1} we have
\begin{align}\label{eq:rev.hold.int.8}
 \Bigg(\mean{Q_{2^k\rho}}|u-(u)_{2^k\rho}|^{q-1}\,dx\Bigg)^\frac{1}{q-1}\lesssim \rho^{\theta} \Big(\mean{Q_{2^k\rho}}|Du|^{p\sigma}\,dx\Big)^\frac{\theta}{p\sigma}.
\end{align}
If $Q_{2^k\rho}\not\subset\Omega$, then $|Q_{2^{k+1}\rho}\cap\Omega^c|>0$ and $Q_{2^{k+1}\rho}\Subset\Omega'$ (thanks to the assumption $Q_{2^{L+1}}\Subset\Omega'$). Thus, in this case, by the use of \eqref{eq:b.10} together with \eqref{eq:rev.hol.b.3} and \eqref{eq:rev.hol.b.1}, we obtain 
\begin{align}\label{eq:rev.hol.int.7}
\Bigg(\mean{Q_{2^k\rho}}|u-(u)_{2^k\rho}|^{q-1}\,dx\Bigg)^\frac{1}{q-1}&\lesssim \Bigg(\mean{Q_{2^{k+1}\rho}}|u-(u)_{2^{k+1}\rho}|^{q}\,dx\Bigg)^\frac{1}{q} \nonumber\\
%&\lesssim \Bigg(\mean{\Omega_{2^k\rho}}|u-g|^{q}dx\Bigg)^\frac{1}{q}+ \Bigg(\mean{Q_{2^{k+1}\rho}}|g-(g)_{2^{k+1}\rho}|^{q}dx\Bigg)^\frac{1}{q} \nonumber\\
&\lesssim (2^{k+1}\rho)^{\theta} \Bigg[\left(\mean{\Omega_{2^{k+1}\rho}}|Du|^{p\sigma}\,dx\right)^{\frac{\theta}{p\sigma}}+\left(\mean{Q_{2^{k+1}\rho}}|Dg|^{p}\,dx\right)^{\frac{\theta}{p}}\Bigg].
\end{align}
Therefore, on account of \eqref{eq:rev.hold.int.8} and \eqref{eq:rev.hol.int.7}, we obtain from \eqref{eq:rev.hol.int.9} that  
\begin{align*}%\label{eq:rev.hol.int.6}
\tilde{I}_{in}&\leq \sum^{L+1}_{k=1}(2^k\rho)^{\theta-\frac{q\tau}{q-1}}\Bigg[\left(\mean{\Omega_{2^{k}\rho}}|Du|^{p\sigma}\,dx\right)^{\frac{\theta}{p\sigma}}+\left(\mean{Q_{2^{k}\rho}}|Dg|^{p}\,dx\right)^{\frac{\theta}{p}}\Bigg]\nonumber\\
&\quad+\left(\int_{\ern\setminus Q_{2^L\rho}}\dfrac{|u(y)-(u)_{2^L\rho}|^{q-1}}{|y-x_0|^{n+sq}}\,dy\right)^\frac{1}{q-1},
\end{align*}
which is a similar estimate to \eqref{eq:rev.hol.b.13}. Now by following the remaining proof of Case 1 with appropriate modification and recalling \eqref{eq:cacc.b.int}, we again have \eqref{eq:rev.hol.b}.
\end{proof}

Now we prove the main result of this section in the boundary case.

\begin{proposition}[Higher integrability]\label{prop:h.i.b}
Let  $\Omega$ be an open set satisfying \eqref{eq:m.den1} with $R_0>0$ and let $u\in\mathbb{X}_g(\Omega;\Omega')\cap \mathbb{X}(\Omega)$ be a weak solution to problem \eqref{eq:bdary} under the structure assumptions \eqref{eq:s1pq}, \eqref{eq:sq<p}, \eqref{eq:A} and \eqref{eq:K}. Assume that  $g\in \mathbb{X}_{\loc}(\Omega')$ and for some small $\varsigma_0\in (0,1)$, there holds
	\begin{align*}
		|F|\in L^{p(1+\varsigma_0)}(\Omega_{3\rho_0}(x_0)) \quad\mbox{and}\quad |Dg|\in L^{p(1+\varsigma_0)}(Q_{3\rho_0}(x_0)),
	\end{align*}
where $x_0\in\overline\Omega$ and $\rho_0\in (0,1)$ satisfying $Q_{3\rho_0}(x_0)\Subset\Omega'$. Then, there exists $\varsigma_1\in (0,1)$ depending only on $\data_1$ and $\varsigma_0$ such that for all $\varsigma\in (0,\varsigma_1)$, $|Du|\in L^{p+\varsigma}(\Omega_{\rho_0}(x_0))$. Moreover, there exists a constant $c=c(\data_1,\varsigma_0)$ such that 
	\begin{align*}
		&\left(\mean{\Omega_{\rho_0}(x_0)}|Du|^{p+\varsigma}\,dx\right)^\frac{1}{p+\varsigma} \nonumber\\
		&\leq c \left(\mean{\Omega_{3\rho_0}(x_0)}|Du|^{p}\,dx\right)^{\frac{1}{p}}+c\left(\mean{Q_{3\rho_0}(x_0)}|Dg|^{p(1+\varsigma)}\,dx\right)^\frac{1}{p(1+\varsigma)} \nonumber\\
     &\quad+c\left(\mean{\Omega_{3\rho_0}(x_0)}(|F|^{p}+1)^{1+\varsigma}\,dx\right)^\frac{1}{p(1+\varsigma)}
		+ c \left(\rho_0^{1-\varrho}\int_{\ern\setminus Q_{3\rho_0}(x_0)}\dfrac{|u(y)-(u)_{3\rho_0}|^{q-1}}{|y-x_0|^{n+sq}}\,dy\right)^\frac{1}{m-1}
	\end{align*}
 for all $\varrho\in(0,m-qs)$ with $m=\min\{p,q\}$, where we have set 
 \begin{align*}
 \data_1:=\data_1\big(\data,\mathcal{M}(u;\Omega),\mathcal{M}(g;Q_{3\rho_0}(x_0))\big).
 \end{align*}
\end{proposition}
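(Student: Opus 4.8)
The plan is to derive the higher integrability of $|Du|$ from the reverse H\"older inequality of Lemma~\ref{lem:rev.hold} by a Gehring-type argument adapted to the presence of the nonlocal tail. First I would fix the reference cylinder $Q_{3\rho_0}(x_0)$ and introduce, for $\rho\in(0,\rho_0)$ and $x\in\overline\Omega$ with $Q_{2\rho}(x)\subset Q_{3\rho_0}(x_0)$, a suitable covering by dyadic cylinders $Q_{2^k\rho}$; the integer $L=L(\rho)$ is chosen as the largest one with $2^L\rho\lesssim\rho_0$, so that the annuli $Q_{2^{k+1}\rho}\setminus Q_{2^k\rho}$ for $k=0,\dots,L-1$ exhaust the region between the small cylinder and the reference one, and the residual tail over $\ern\setminus Q_{2^L\rho}$ is comparable to the tail over $\ern\setminus Q_{3\rho_0}(x_0)$ up to controlled averages of $|Du|$ and $|Dg|$. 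The point of this step is that Lemma~\ref{lem:rev.hold} already packages the tail contribution into a finite sum $\sum_{k=1}^{L+1}(2^k)^{\theta-q\tau/(q-1)}\big[(\cdots|Du|^{p\sigma}\cdots)^{\theta/(p\sigma)}+(\cdots|Dg|^p\cdots)^{\theta/p}\big]$ times an arbitrarily small constant $\nu$, and the exponent $\theta-q\tau/(q-1)$ is \emph{negative} by the choice \eqref{eq:tau} (indeed $\tau>\tfrac{q-1}{q}\theta$), so the geometric-type weights $(2^k)^{\theta-q\tau/(q-1)}$ are summable. This summability is the structural reason the scheme closes.

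Next I would set up the iteration on a Calder\'on--Zygmund / Vitali framework. Choose $\sigma=\tfrac{n}{n+p}<1$ so that Lemma~\ref{lem:rev.hold} reads as a genuine reverse H\"older inequality with exponent $p\sigma<p$ on the left and integrable data $|Dg|^p$, $|F|^p+1$ on the right. Define the (truncated) maximal-type quantity
\begin{align*}
\mathcal{G}(\rho):=\sup_{Q_{2r}(x)\subset Q_{3\rho_0}(x_0),\ r\le\rho}\ \mean{\Omega_{r}(x)}|Du|^{p\sigma}\,dx
\end{align*}
together with the analogous quantities built from $|Dg|^p$ and $|F|^p+1$, and absorb the finite tail sum into $\mathcal{G}$: by H\"older in the sum and the summability of the dyadic weights, the bracketed sum raised to $p/\theta$ is bounded by $c\,\mathcal{G}(c\rho_0)^{1/\sigma}+c\,(\text{data})$, and the $\nu$ in front lets us absorb the $\mathcal{G}$-part into the left-hand side after taking a supremum over small radii. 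One then arrives at a reverse H\"older inequality of the classical form
\begin{align*}
\mean{\Omega_{\rho/2}(x)}|Du|^p\,dx\le c\Big(\mean{\Omega_{\rho}(x)}|Du|^{p\sigma}\,dx\Big)^{1/\sigma}+c\,\Phi(x,\rho),
\end{align*}
where $\Phi$ collects $|Dg|^p$, $|F|^p+1$ and the single genuine tail term over $\ern\setminus Q_{3\rho_0}(x_0)$, all of which are (after raising to the relevant power) in $L^{1+\varsigma_0}$ by hypothesis or are constants. Here the measure density estimate \eqref{eq:m.den1} and the boundary Sobolev--Poincar\'e inequality (Lemma~\ref{lem:embed3}), already invoked inside Lemma~\ref{lem:rev.hold}, guarantee the inequality holds uniformly up to $\partial\Omega$, so no separate interior/boundary bookkeeping is needed at this stage.

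Finally I would invoke Gehring's lemma (the Giaquinta--Modica self-improving lemma, in its form valid on a fixed cylinder with right-hand side $\Phi\in L^{1+\varsigma_0}$): there is $\varsigma_1\in(0,\varsigma_0)$, depending only on $\data_1$ and $\varsigma_0$ through the constants in the reverse H\"older inequality, such that $|Du|\in L^{p+\varsigma}(\Omega_{\rho_0}(x_0))$ for all $\varsigma<\varsigma_1$, with the quantitative bound
\begin{align*}
\Big(\mean{\Omega_{\rho_0}(x_0)}|Du|^{p+\varsigma}\,dx\Big)^{\frac{1}{p+\varsigma}}\le c\Big(\mean{\Omega_{3\rho_0}(x_0)}|Du|^{p}\,dx\Big)^{\frac1p}+c\Big(\mean{\Omega_{3\rho_0}(x_0)}\Phi^{1+\varsigma}\,dx\Big)^{\frac{1}{p(1+\varsigma)}}.
\end{align*}
Unwinding $\Phi$ gives exactly the four terms in the statement (the $|Dg|^{p(1+\varsigma)}$ average, the $(|F|^p+1)^{1+\varsigma}$ average, and the tail term to the power $1/(m-1)$, which is unaffected by Gehring since it is a fixed quantity). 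The main obstacle is the middle step: organizing the tail term so that, after the dyadic splitting, every piece is either absorbable into the left-hand supremum (using the negative exponent $\theta-q\tau/(q-1)$ and the free parameter $\nu$) or is a harmless higher-integrability datum — in particular one must be careful that the exponents $\tfrac{\theta}{p\sigma}$ and $\tfrac{p}{\theta}$ match up after H\"older in the $k$-sum, and that the transition between cylinders lying inside $\Omega$ and cylinders meeting $\Omega^c$ is handled uniformly, exactly as in \eqref{eq:rev.hold.int.8}--\eqref{eq:rev.hol.int.7}. Once the reverse H\"older inequality is in the standard form, the Gehring step is routine.
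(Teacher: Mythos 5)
Your general direction -- reverse H\"older from Lemma~\ref{lem:rev.hold}, then self-improvement -- matches the paper, and you correctly spot that the summability $\sum_k\beta_k<\infty$ (from $\tau>\tfrac{q-1}{q}\theta$) is what makes the dyadic tail sum finite. But the central absorption step is where your plan breaks down, and the issue is not cosmetic: it is precisely the obstacle that forces the paper to use the exit-time level-set argument instead of a Gehring lemma.

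Concretely, after raising the tail sum to the power $p/\theta$ and applying Jensen in $k$, you arrive at something like $\nu\sum_k\beta_k\big(\mean{\Omega_{2^k\rho}(x)}|Du|^{p\sigma}\,dy\big)^{1/\sigma}$, a weighted sum of $|Du|^{p\sigma}$-averages over cylinders of radius $2^k\rho>\rho$, i.e.\ strictly \emph{larger} than the scale appearing on the left-hand side. That is not compatible with the Gehring/Giaquinta--Modica framework, which only allows a single same- or slightly-larger-scale average of $|Du|^p$ (not a sum over all scales up to $\rho_0$) as the absorbable term. Your proposed fix -- dominating the sum by $\nu\,\mathcal{G}(c\rho_0)^{1/\sigma}$ -- fails on two counts. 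First, you define $\mathcal{G}(\rho)$ as a supremum over radii $r\le\rho$, so it does not dominate the averages over the larger cylinders $\Omega_{2^k\rho}$ unless you take $c\rho_0\approx 3\rho_0$, in which case $\mathcal{G}$ is an essentially unrestricted maximal quantity. Second, even granting that, $\mathcal{G}(3\rho_0)^{1/\sigma}$ is dominated by $\sup_{r}\mean{\Omega_r}|Du|^p$, which is not finite a priori for an $L^p$ function; you invoke the word ``truncated'' without saying what is truncated, how the truncation interacts with the tail terms, or why the absorbed constant stays uniform in the truncation parameter. The absorption you describe is therefore not justified.

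The paper's proof avoids exactly this pitfall: it runs a Vitali/exit-time argument where, for each level $\lambda\ge\lambda_0$ and each exit cylinder $Q_{R_j}(x_j)$, the \emph{stopping condition} \eqref{eq:vit.cov.1} gives $\big(\mean{\Omega_{2^kR_j}(x_j)}|Du|^{p\sigma}\big)^{1/(p\sigma)}\le\lambda$ for all the intermediate dyadic scales $k=1,\dots,l_j-1$ (since $2^kR_j>R_j$), while the residual tail and the top two scales are absorbed into the starting level $\lambda_0$, which was deliberately defined to dominate ${\rm Tail}(x,R)$ for all $R$ in the relevant range \eqref{eq:lamb.1}. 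The multi-scale tail is thus controlled \emph{level by level}, never via a supremum that could be infinite, and finiteness is handled at the very end by truncating $|Du|$ to $[|Du|]_t$ and integrating in $\lambda$ via Fubini. Your plan is missing this exit-time mechanism, and without it the Gehring step you invoke is not ``routine'' -- it does not apply to a reverse H\"older inequality carrying a multi-scale tail of the form in \eqref{eq:rev.hol.b}.
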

\begin{proof}
Proceeding with the notations of the proposition, for $Q_R(x)\subset Q_{3\rho_0}(x_0)$, we define the following functionals:
\begin{align*}
 &\Upsilon(x,R):=\left(\mean{\Omega_R(x)}(|F|^{p}+1)^{1+\varsigma}\,dy\right)^\frac{1}{p(1+\varsigma)}+\left(\mean{Q_R(x)}|Dg|^{p(1+\varsigma)}\,dy\right)^\frac{1}{p(1+\varsigma)},\nonumber\\
  &\Psi_M(x,R):= \left(\mean{\Omega_R(x)}|Du|^{p}\,dy\right)^\frac{1}{p} +M\left(\mean{\Omega_R(x)}(|F|^{p}+1)\,dy\right)^\frac{1}{p}+M\left(\mean{Q_R(x)}|Dg|^{p}\,dy\right)^\frac{1}{p},
\end{align*}
where $\varsigma\in (0,1)$ and $M\geq 1$ are constants which will be determined later, and 
\begin{align}\label{eq:sec3.tail}
	{\rm Tail}(x,R):&=\left[\sum_{k=1}^{l+1}\beta_k \Bigg( \left(\mean{\Omega_{2^kR}(x)}|Du|^{p\sigma}\,dy\right)^\frac{\theta}{p\sigma}+\left(\mean{Q_{2^kR}(x)}|Dg|^{p}\,dy\right)^\frac{\theta}{p}\Bigg)\right]^\frac{1}{\theta} \nonumber\\ 
	&\quad + \left(R^{1-\varrho} \int_{\ern\setminus Q_{2^lR}(x)}\dfrac{|u(y)-(u)_{2^lR}|^{q-1}}{|y-x|^{n+sq}}\,dy\right)^\frac{1}{m-1},
\end{align}
where $\beta_k:=(2^k)^{\theta-\frac{q\tau}{q-1}}$, $\varrho<q(1-s)$ and $l=l(R)\in\mathbb{N}$ is such that $\frac{\rho_0}{2}\leq 2^lR<\rho_0$. Moreover, we set
\begin{align*}
	&\Xi(x,R):=\Upsilon(x,R)+\Psi_M(x,R)+{\rm Tail}(x,R) \quad\mbox{and} \nonumber\\
	&\Xi_0(x,R):=\Upsilon(x,R)+\Psi_1(x,R)+ \left(R^{1-\varrho} \int_{\ern\setminus Q_{R}(x)}\dfrac{|u(y)-(u)_{R}|^{q-1}}{|y-x|^{n+sq}}\,dy\right)^\frac{1}{m-1}.
\end{align*}
Fix $r_1$ and $r_2$ such that $\frac{\rho_0}{2}\leq r_1<r_2\leq\rho_0$, and define 
\begin{align}\label{eq:lamb.1}
	\lambda_0:= \sup \left\{\Xi(x,R) \ : \ x\in \Omega_{r_1}(x_0), \ \frac{r_2-r_1}{16}<R\leq \frac{\rho_0}{2} \right\}.
\end{align}
Then the proof is divided into five steps as below.\\
\textbf{Step 1}: \textit{An upper bound on $\lambda_0$}. For any $x\in \Omega_{r_1}(x_0)$ and $\frac{r_2-r_1}{16}<R\leq \frac{\rho_0}{2}$, we observe that 
\begin{align*}
	\Upsilon(x,R)\leq \left(\frac{3\rho_0}{R}\right)^\frac{n}{p(1+\varsigma)}\Upsilon(x_0,3\rho_0)\leq \left(16\frac{3\rho_0}{r_2-r_1}\right)^\frac{n}{p}\Upsilon(x_0,3\rho_0).
\end{align*}
 Additionally, for $k=0,\dots,l+1$, noting $Q_{2^{l+1}R}(x)\subset Q_{3\rho_0}(x_0)$, we have
\begin{align*}
	\left(\mean{\Omega_{2^kR}(x)}|Du|^{p\sigma}\,dy\right)^\frac{1}{p\sigma} \leq \left(16\frac{3\rho_0}{r_2-r_1}\right)^\frac{n+p}{p}\left(\mean{\Omega_{3\rho_0}(x_0)}|Du|^{p}\,dy\right)^\frac{1}{p}
\end{align*}
for all $\frac{n+p}{n}\leq \sigma\leq 1$ and 
\begin{align*}
	\left(\mean{Q_{2^kR}(x)}|Dg|^{p}\,dy\right)^\frac{1}{p} \leq \left(16\frac{3\rho_0}{r_2-r_1}\right)^\frac{n}{p}\left(\mean{Q_{3\rho_0}(x_0)}|Dg|^{p}\,dy\right)^\frac{1}{p}.
\end{align*}
Furthermore, for the nonlocal tail term, similar to the proof of Lemma \ref{lem:rev.hold}, we see that
\begin{align*}
	J_0:&=\left( \int_{\ern\setminus Q_{2^lR}(x)}\dfrac{|u(y)-(u)_{2^lR}|^{q-1}}{|y-x|^{n+sq}}\,dy\right)^\frac{1}{m-1} \nonumber\\
	%	&\lesssim \left(\frac{3\rho_0}{r_2-r_1}\right)^\frac{n+sq}{p-1} \left( \int_{\ern\setminus Q_{3\rho_0}(x_0)} \dfrac{|u(y)-(u)_{2^lR}|^{q-1}}{|y-x_0|^{n+sq}}\,dy\right)^\frac{1}{p-1} \\
	%	&\quad+  \left(\int_{Q_{3\rho_0}(x_0)\setminus Q_{2^lR}(x)} \dfrac{|u(y)-(u)_{2^lR}|^{q-1}}{|y-x|^{n+sq}}\,dy\right)^\frac{1}{p-1}
	&\lesssim \left(\frac{3\rho_0}{r_2-r_1}\right)^\frac{n+sq}{m-1} \left( \int_{\ern\setminus Q_{3\rho_0}(x_0)} \dfrac{|u(y)-(u)_{3\rho_0}|^{q-1}}{|y-x_0|^{n+sq}}\,dy\right)^\frac{1}{m-1} \\
	&\quad+ \left(\frac{3\rho_0}{r_2-r_1}\right)^\frac{2n+sq}{m-1} (3\rho_0)^{-\frac{sq}{m-1}} 
	\left(\mean{Q_{3\rho_0}(x_0)}|u(y)-(u)_{3\rho_0}|^{q-1}\,dy\right)^\frac{1}{m-1}.
\end{align*}
Noting that $m\leq p$ and $u-g\in L^\infty(Q_{3\rho_0}(x_0))$ if $q>m$, by H\"older's inequality and Poincar\'e's inequality, we have 
\begin{align*}
	\left(\mean{Q_{3\rho_0}(x_0)}|u(y)-(u)_{3\rho_0}|^{q-1}\,dy\right)^\frac{1}{m-1} &\lesssim \|u-g\|_{L^\infty(Q_{3\rho_0})}^\frac{q-m}{m-1} \left(\mean{Q_{3\rho_0}(x_0)}|u(y)-(u)_{3\rho_0}|^{m-1}\,dy\right)^\frac{1}{m-1} \nonumber\\
	&\lesssim \rho_0 \left( \Big(\mean{\Omega_{3\rho_0}(x_0)}|Du|^{p}\,dy\Big)^\frac{1}{p}+\Big(\mean{Q_{3\rho_0}(x_0)}|Dg|^{p}\,dy\Big)^\frac{1}{p}\right),
	%	\left[\|u\|^{1-\theta}_{L^\infty(\Omega_{3\rho_0}(x_0))} \left(\mean{\Omega_{3\rho_0}(x_0)} |Du|^{p\sigma}\right)^\frac{\theta}{p\sigma} 
	%  +\|g\|^{1-\theta}_{L^\infty(Q_{3\rho_0}(x_0))} \left(\mean{Q_{3\rho_0}(x_0)} |Dg|^{p}\right)^\frac{\theta}{p} \right].
\end{align*}
where we have also used \eqref{eq:b.10}.
Combining the above estimates and noting that $\varrho<m-qs$, we obtain
\begin{align}\label{eq:uppr.bnd.lmda-1}
	\lambda_0\leq cM \left(\frac{3\rho_0}{r_2-r_1}\right)^{\frac{2n+sq}{m-1}+\frac{n+p}{p}} \, \Xi_0(x_0,3\rho_0).
\end{align}
\textbf{Step 2}:\textit{Vitali covering}. For $\lambda\geq \lambda_0$ and $t>0$, we define
\begin{align}\label{eq:e-lamb}
	E_{\lambda}^t:=\Big\{ x\in \Omega_t(x_0) \, : \, \sup_{0\leq R\leq\frac{r_2-r_1}{16}}\Psi_M(x,R)>\lambda \Big\}.
\end{align}
From \eqref{eq:lamb.1}, it is clear that for each $x\in \Omega_{r_1}(x_0)$ and $R\in \big[\frac{r_2-r_1}{16},\frac{\rho_0}{2}\big]$, there holds
\begin{align*}
	\Psi_M(x,R)\leq \lambda_0\leq\lambda.
\end{align*}
Moreover, for each $x\in E_{\lambda}^{r_1}$ with $\lambda\geq\lambda_0$, there exists $R_x\in (0,\frac{r_2-r_1}{16}\big]$ such that
\begin{align}\label{eq:vit.cov.1}
	\Psi_M(x,R_x)\geq \lambda \quad\mbox{and}\quad \Psi_M(x,R)\leq\lambda\quad\mbox{for all }R\in (R_x,\tfrac{r_2-r_1}{16}\big].
\end{align}
By Vitali's covering lemma, there exists a sequence of pairwise disjoint cylinders $\{Q_{2R_{x_j}}(x_j)\}$ such that 
\begin{align}\label{eq:vit.cov.4}
	E_{\lambda}^{r_1}\subset\bigcup_{j=1}^{\infty}\Omega_{10R_{x_j}}(x_j).% \quad\mbox{with } \{Q_{2R_{x_j}}(x_j)\} \mbox{ are pairwise disjoint}.
\end{align} 
For each $j\in\mathbb{N}$, setting $R_j:=R_{x_j}$ and $Q_j:= Q_{R_{x_j}}(x_j)$,  we choose $l_{j}\in\mathbb{N}$ satisfying 
\begin{align}\label{eq:vit.cov.2}
	\frac{\rho_0}{2}\leq 2^{l_j}R_j<\rho_0.
\end{align}
Since $R_j\leq \frac{1}{16}\frac{\rho_0}{2}$, it is evident that $l_j\geq 3$ for all $j\in\mathbb{N}$. Moreover, $Q_{2^{l_j+1}R_j}(x_j)\subset Q_{3\rho_0}(x_0)$.\\
\textbf{Step 3}: \textit{Estimate on $|\Omega_j|$}. On account of \eqref{eq:e-lamb}, either of the following holds:
\begin{align}
	&\left(\mean{\Omega_j}|Du|^p\,dy\right)^\frac{1}{p}\geq \frac{\lambda}{2} \quad\mbox{or} \label{eq:lamb-case1}\\
	&M \left(\mean{\Omega_{j}}(|F|^{p}+1)\,dy\right)^\frac{1}{p}+M \left(\mean{Q_{j}}|Dg|^{p}\,dy\right)^\frac{1}{p}\geq \frac{\lambda}{2} \label{eq:lamb-case2}.
\end{align}
\texttt{Case 1}: Suppose that \eqref{eq:lamb-case1} holds. Then, 
by the reverse H\"older inequality \eqref{eq:rev.hol.b}, we have
\begin{align*}
    \lambda^p &\leq  c\left(\mean{2\Omega_{j}}|Du|^{p\sigma}\,dy\right)^{\frac{1}{\sigma}}+c\mean{2\Omega_{j}}(|F|^p+1)\,dy+c\mean{2Q_{j}}|Dg|^p\,dy \nonumber\\
	&\quad+\nu \left[\sum^{l_j+1}_{k=1}\beta_k \left(\Big(\mean{2^{k}\Omega_j}|Du|^{p\sigma}\,dy\Big)^{\frac{\theta}{p\sigma}}+\Big(\mean{2^{k}Q_j}|Dg|^{p}\,dy\Big)^\frac{\theta}{p}\right) \right]^\frac{p}{\theta}\\
	&\quad+\nu  \left(R_{j}^{1-\varrho}\int_{\ern\setminus 2^{l_j}Q_j}\dfrac{|u(y)-(u)_{2^{l_j}R_j}|^{q-1}}{|y-x_0|^{n+sq}}\,dy\right)^\frac{p}{m-1},
\end{align*}
where $aQ_j=Q_{aR_{x_j}}(x_j)$ for $a>0$. As a consequence of \eqref{eq:vit.cov.1}, for all $k=1,\dots,l_j-1$, we obtain
\begin{align*}
	\left(\mean{2^{k}\Omega_j}|Du|^{p\sigma}\,dy\right)^{\frac{1}{p\sigma}} \leq \lambda,\quad
	\left(\mean{2^{k}Q_j}|Dg|^{p}\,dy\right)^{\frac{1}{p}} \leq \frac{\lambda}{M} 
\end{align*}
and 
\begin{align*}
    \left(\mean{2\Omega_{j}}(|F|^p+1)\,dy\right)^\frac{1}{p}\leq \frac{\lambda}{M}.
\end{align*}
Next, recalling the definition of $\mathrm{Tail}(x_j,R)$ from \eqref{eq:sec3.tail} for $R=2^{l_j-1}R_j\in \big[\frac{r_2-r_1}{16},\frac{\rho_0}{2}\big]$ (with $l_R=1$), on account of \eqref{eq:lamb.1}, we have 
\begin{align*}
 \beta_{l_j}^\frac{1}{\theta}&\left[  \left(\mean{2^{l_j}\Omega_{j}}|Du|^{p\sigma}\,dy\right)^\frac{\theta}{p\sigma}+\left(\mean{2^{l_j}Q_{R}}|Dg|^{p}\,dy\right)^\frac{\theta}{p}\right]\\
&\quad\quad\quad+\beta_{l_j}^\frac{1}{\theta}\left[\left(\mean{2^{l_j+1}\Omega_{j}}|Du|^{p\sigma}\,dy\right)^\frac{\theta}{p\sigma}+\left(\mean{2^{l_j+1}Q_{R}}|Dg|^{p}\,dy\right)^\frac{\theta}{p}\right]^\frac{1}{\theta} \nonumber\\
 &\quad\quad\quad+\left[(2^{l_j}R_j)^{1-\varrho}\int_{\ern\setminus 2^{l_j}Q_j}\dfrac{|u(y)-(u)_{2^{l_j}R_j}|^{q-1}}{|y-x_0|^{n+sq}}\,dy\right]^\frac{1}{m-1} \nonumber\\
 &\leq \sup_{x\in \Omega_{r_1}(x_0)}{\rm Tail}(x,2^{l_j-1}R_j) \lesssim \lambda.
\end{align*}
Consequently, for $c=c(\data_1)$, since $\sum^{\infty}_{k=0}\beta_k=c<\infty$ (thanks to \eqref{eq:tau}), we obtain
\begin{align*}
	\lambda^p \leq c(\nu) \left(\mean{2\Omega_j}|Du|^{p\sigma}\,dy\right)^{\frac{1}{\sigma}}+ c\frac{\lambda^p}{M^p}+c\nu\lambda^p.
\end{align*}
Now, for suitable choice of $\nu$ (small enough) and $M$ (large enough), both depending only on $\data_1$, we have 
\begin{align*}
	|\Omega_j|\leq \frac{c}{\lambda^{p\sigma}}\int_{2\Omega_j}|Du|^{p\sigma}\,dy,
\end{align*}
which after simple manipulation implies that
\begin{align}\label{eq:vit.cov.8}
	|\Omega_j|\leq \frac{c}{\lambda^{p\sigma}}\int_{2\Omega_j\cap\{|Du|>\kappa\lambda\}}|Du|^{p\sigma}\,dy,
\end{align}
for some $\kappa\in (0,1)$ small enough which is independent of $\lambda$ and $j$.\\
\texttt{Case 2}: Now suppose that \eqref{eq:lamb-case2} holds, then either
\begin{align}\label{eq:lamb-case3}
	M \left(\mean{Q_{j}}|Dg|^{p}\,dy\right)^\frac{1}{p}\geq \frac{\lambda}{4} \quad\mbox{or}\quad
	M \left(\mean{\Omega_{j}}(|F|^{p}+1)\,dy\right)^\frac{1}{p}\geq \frac{\lambda}{4}.
\end{align} 
We now assume that the first inequality of \eqref{eq:lamb-case3} holds true. As a consequence, with some elementary manipulations, we have
% \begin{align*}
% 	\left(\frac{\lambda}{2}\right)^p \leq cM^p \mean{2Q_j}|Dg|^pdx,
% \end{align*}
% which implies that 
\begin{align}\label{eq:vit.cov.9}
	|Q_j|\leq c\left(\frac{M}{\lambda}\right)^p \int_{2Q_j\cap\{|Dg|>\hat\kappa\lambda\}}|Dg|^p\,dy,
\end{align}
for some $\hat\kappa>0$ small enough that depends only on $\data_1$. If the second inequality of \eqref{eq:lamb-case3} holds, then we can proceed similarly to obtain
\begin{align}\label{eq:vit.cov.10}
	|\Omega_j|\leq c\left(\frac{M}{\lambda}\right)^p \int_{2\Omega_j\cap\{(|F|^p+1)^{1/p}>\tilde\kappa\lambda\}}(|F|^p+1)\,dy
\end{align}
for some small $\tilde\kappa>0$ that depends only on $\data_1$. Combining \eqref{eq:vit.cov.8}, \eqref{eq:vit.cov.9} and \eqref{eq:vit.cov.10}, we get 
\begin{align}\label{eq:vit.cov.om-j}
	|\Omega_j|&\leq \frac{c}{\lambda^{p\sigma}}\int_{2\Omega_j\cap\{|Du|>\kappa\lambda\}}|Du|^{p\sigma}\,dy+\frac{c}{\lambda^p} \int_{2Q_j\cap\{|Dg|>\hat\kappa\lambda\}}|Dg|^p\,dy\nonumber\\
	&\quad+\frac{c}{\lambda^p} \int_{2\Omega_j\cap\{(|F|^p+1)^{1/p}>\tilde\kappa\lambda\}}(|F|^p+1)\,dy.
\end{align}
\textbf{Step 4}: \textit{The good $\lambda$ inequality}.
Since $\{2Q_j\}$ is a collection of pairwise disjoint sets and $\cup_{j=1}^{\infty}2Q_j\subset Q_{r_2}(x_0)$, \eqref{eq:vit.cov.om-j} yields
\begin{align}\label{eq:vit.cov.3}
	\sum_{j=1}^{\infty}|\Omega_j|&\leq \frac{c}{\lambda^{p\sigma}}\int_{\Omega_{r_2}(x_0)\cap\{|Du|>\kappa\lambda\}}|Du|^{p\sigma}\,dy+\frac{c}{\lambda^p} \int_{Q_{r_2}(x_0)\cap\{|Dg|>\hat\kappa\lambda\}}|Dg|^p\,dy \nonumber\\
	& \quad+\frac{c}{\lambda^p} \int_{\Omega_{r_2}(x_0)\cap\{(|F|^p+1)^{1/p}>\tilde\kappa\lambda\}}(|F|^p+1)\,dy.
\end{align}
After some elementary manipulations and using \eqref{eq:vit.cov.2} along with  \eqref{eq:vit.cov.3} and \eqref{eq:vit.cov.4} (up to relabeling the constants $\hat\kappa$ and $\tilde\kappa$), we deduce that
\begin{align}\label{eq:slf-imp.gdlmbda}
	\int_{\Omega_{r_1}(x_0)\cap\{|Du|>\lambda\}}|Du|^p \,dy &\leq \frac{c}{\lambda^{p\sigma-p}}\int_{\Omega_{r_2}(x_0)\cap\{|Du|>\lambda\}}|Du|^{p\sigma}\,dy+c \int_{Q_{r_2}(x_0)\cap\{|Dg|>\hat\kappa\lambda\}}|Dg|^p\,dy \nonumber\\
	& \quad+c \int_{\Omega_{r_2}(x_0)\cap\{(|F|^p+1)^{1/p}>\tilde\kappa\lambda\}}(|F|^p+1)\,dy.
\end{align} 
\textbf{Step 5}: \textit{Conclusion}.
Now the rest of the proof follows by standard arguments using \eqref{eq:slf-imp.gdlmbda}. Indeed, for $t\geq 0$, we define the truncated function
	\begin{align*}
		[|Du|]_t:=\min\{|Du|,t\}.
	\end{align*}
Then,   for $\varsigma\in (0,1)$, to be determined later, we  have (e.g., see \cite{KMS})
\begin{align*}
\int_{\Omega_{r_1}(x_0)}  [|Du|]_t^{\varsigma} \,|Du|^p \,dy\leq \lambda_0^\varsigma \int_{\Omega_{r_1}(x_0)} |Du|^p \,dy+ \varsigma \int_{\lambda_0}^{t}\lambda^{\varsigma-1}\int_{\Omega_{r_1}(x_0)\cap \{|Du|\geq \lambda\}}  |Du|^p \,dy \,d\lambda. 
\end{align*}
The above expression with the help of \eqref{eq:slf-imp.gdlmbda} and the definition of $\lambda_0$ imply that
	\begin{align*}
	\int_{\Omega_{r_1}(x_0)}& [|Du|]_t^{\varsigma} \,|Du|^p \,dy \nonumber\\
		&\leq  c\lambda_0^{\varsigma+p} |\Omega_{r_1}(x_0)|+ c\, \varsigma\int_{\lambda_0}^{t}\lambda^{\varsigma-1+p-p\sigma}\int_{\Omega_{r_2}(x_0)\cap\{|Du|>\lambda\}}|Du|^{p\sigma}\,dy\, d\lambda \nonumber\\
		& \quad+c \, \varsigma\int_{\lambda_0}^{t}\lambda^{\varsigma-1} \int_{Q_{r_2}(x_0)\cap\{|Dg|>\hat\kappa\lambda\}}|Dg|^{p}\,dy\, d\lambda \nonumber\\
		& \quad+ c \, \varsigma \int_{\lambda_0}^{t}\lambda^{\varsigma-1}\int_{\Omega_{r_2}(x_0)\cap\{(|F|^p+1)^{1/p}>\tilde\kappa\lambda\}}(|F|^p+1)\,dy \,d\lambda.
	\end{align*}
 Now, by using the upper bound on $\lambda_0$ given by \eqref{eq:uppr.bnd.lmda-1} and Fubini's theorem (see \cite{KMS} for details), we can deduce that
	\begin{align*}
		\int_{\Omega_{r_1}(x_0)}&  [|Du|]_t^{\varsigma}\,|Du|^p \,dy \nonumber\\
		&\leq  \frac{c \, \varsigma}{p-p\sigma+\varsigma} \int_{\Omega_{r_2}(x_0)}[|Du|]_t^{\varsigma}\,|Du|^{p}\,dy +c |Q_{r_2}(x_0)| \mean{Q_{r_2}(x_0)}|Dg|^{p+\varsigma}\,dy \nonumber\\
		& \quad+c|\Omega_{r_2}(x_0)| \mean{\Omega_{r_2}(x_0)}(|F|^p+1)^{1+\varsigma}\,dy+c \ \Xi_0(x_0,3\rho_0)^{p+\varsigma} |\Omega_{r_1}(x_0)|.
	\end{align*}
Choosing $\varsigma_1\in (0,\varsigma_0)$ small enough depending only on $\data_1$, $\varsigma_0$ and $\sigma$ (as the constant $c$ depends only on $\data_1$) such that  $\frac{c\,\varsigma_1}{p-p\sigma}\leq \frac{1}{16}$, from a standard iteration argument, for all $\varsigma<\varsigma_1$, we obtain 
\begin{align*}
 \left(	\int_{\Omega_{\rho_0/2}(x_0)}  [|Du|]_t^{\varsigma} \, |Du|^p \,dy\right)^\frac{1}{p+\varsigma} \leq c \ \Xi_0(x_0,3\rho_0),
\end{align*}
which upon taking the limit as $t\to\infty$ yields
\begin{align}\label{eq:slf-impv.9}
   \left(	\int_{\Omega_{\rho_0/2}(x_0)} |Du|^{p+\varsigma} \,dy\right)^\frac{1}{p+\varsigma} \leq c \ \Xi_0(x_0,3\rho_0).
\end{align}

To complete the proof of the proposition, we use a covering argument as follows. For $r=\frac{4\rho_0}{5}$, there exist finitely many points $x_1,\dots,x_k\in Q_{\frac{\rho_0}{2}}(x_0)$ such that
\begin{align}\label{eq:slf-impv.8}
    \overline{Q_{\frac{9}{10}\rho_0}(x_0)}\subset \bigcup_{i=1}^{k} Q_{\frac{r}{2}}(x_i) \quad\mbox{and}\quad Q_{3r}(x_i)\Subset Q_{3\rho_0}(x_0).
\end{align}
Therefore, using \eqref{eq:slf-impv.9} for each $(x_i,r)$, we get
\begin{align*}
    \left(	\int_{\Omega_{r/2}(x_i)} |Du|^{p+\varsigma} \,dy\right)^\frac{1}{p+\varsigma} \leq c \ \Xi_0(x_i,3r)\leq c(k) \ \Xi(x_0,3\rho_0), 
\end{align*}
where the last inequality follows from the assertion of Step 1. Noting \eqref{eq:slf-impv.8} and summing over $i=1$ to $k$, we deduce that
\begin{align*}
    \left(	\int_{\Omega_{9\rho_0/10}(x_0)} |Du|^{p+\varsigma} \,dy\right)^\frac{1}{p+\varsigma} \leq c(k) \ \Xi(x_0,3\rho_0).
\end{align*}
Now, the conclusion of the proposition follows by repeating the above procedure for a different choice of $r$, for instance,  $r=\rho_0/10$.
\end{proof}

%\begin{proposition}[Higher integrability; local case]\label{prop:h.i.l}
%	\textcolor{red}{Interior case: only the statement of higher integrability.}
%\end{proposition}

%\textcolor{blue}{I think we should state the small higher integrability in local case without proof. Isn't it?} {\color{magenta}Other than the term involving $|Dg|$, everything would be the same as boundary case. So, I think unnecessarily it will take extra space to mention the full result.}

\section{Interior Gradient Estimate}
Throughout the section, we assume that $u$ is a local weak solution to \eqref{eq:inter} with $|F|\in L^p(\Omega)$. For
fixed $\tilde{\Omega}\Subset\Omega$, we select $R\in(0,1/9)$ and choose $L=L(R)\in\mathbb{N}$ such that  $3^LR\leq 1$ and $Q_{3^LR}\Subset\tilde{\Omega}$.\par 
%%%%
Consider the problem:
\begin{equation}\label{eq:1st.eq.l}
	\left\{ \begin{array}{rlll}
		-\divo(A(x,Dw))&=0\quad\text{in}\,\, Q_{2R},\\
		w&=u\quad\text{on}\,\,\partial Q_{2R}.
	\end{array}
	\right.
\end{equation}
Before giving the first comparison estimate, we mention the following energy estimate and maximum principle which are local analogues of \eqref{eq:1st.eq.b} and \eqref{eq:max.pr.b}, respectively. 
\begin{lemma}\label{lem:energy-int.wu}
 Let $w$ be a weak solution to problem \eqref{eq:1st.eq.l}, then there exists $c=c(n,p,\Lambda)$ such that 
	\begin{align*}%\label{eq:energy.wu.l}
		\mean{Q_{2R}}|Dw|^p\,dx\leq c\mean{Q_{2R}}|Du|^p\,dx,
	\end{align*}	
	and for the case $p<q<\frac{p}{s}$, there holds
	\begin{align*}%\label{eq:max.wu.l}
		\|w\|_{L^{\infty}(Q_{2R})}\leq c \|u\|_{L^{\infty}(Q_{2R})}.
	\end{align*}		
\end{lemma}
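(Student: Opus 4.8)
The plan is to prove the two assertions separately, in both cases by testing the equation \eqref{eq:1st.eq.l} against a function built from $w$ and $u$. I would first record the pointwise consequences of the structure conditions: the growth bound in \eqref{eq:A} gives $|A(x,z)|\le\Lambda|z|^{p-1}$ for a.e.\ $x$ and all $z$ (and, by continuity in the second variable, $A(x,0)=0$), while \eqref{eq:AV} applied with $z_2=0$ yields the coercivity $A(x,z)\cdot z\simeq|z|^p$, with implicit constants depending only on $p$ and $\Lambda$.

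For the energy estimate, I would test the weak formulation of \eqref{eq:1st.eq.l} with $\varphi=w-u$. This is admissible: the boundary condition $w=u$ on $\partial Q_{2R}$ means precisely $w-u\in W^{1,p}_0(Q_{2R})$, and $u\in W^{1,p}(Q_{2R})$ since $Q_{2R}\Subset\widetilde\Omega\Subset\Omega$. The resulting identity
\[
\int_{Q_{2R}}A(x,Dw)\cdot Dw\,dx=\int_{Q_{2R}}A(x,Dw)\cdot Du\,dx,
\]
together with the coercivity on the left, the growth bound $|A(x,Dw)|\le\Lambda|Dw|^{p-1}$ on the right, and Young's inequality in the form $|A(x,Dw)||Du|\le\varepsilon|Dw|^p+c(\varepsilon,p,\Lambda)|Du|^p$, gives, after choosing $\varepsilon$ small, absorbing the $|Dw|^p$ term, and dividing by $|Q_{2R}|$, the bound $\mean{Q_{2R}}|Dw|^p\,dx\le c\mean{Q_{2R}}|Du|^p\,dx$. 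This part does not use $p<q<p/s$.

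For the maximum principle, assume $p<q<p/s$, so that we are in the case $q>p$ of \eqref{eq:X.l}; then $u\in\mathbb{X}_{\loc}(\Omega)$ means in particular $u\in L^\infty(Q_{2R})$, and I set $k:=\|u\|_{L^\infty(Q_{2R})}<\infty$. The key point is that $(w-k)_+$ is an admissible test function: since $u\le k$ a.e.\ in $Q_{2R}$ we have $0\le(w-k)_+\le(w-u)_+$ a.e., and $(w-u)_+\in W^{1,p}_0(Q_{2R})$ as a truncation of $w-u\in W^{1,p}_0(Q_{2R})$, so the standard fact that a nonnegative $W^{1,p}$ function dominated a.e.\ by a $W^{1,p}_0$ function is itself in $W^{1,p}_0$ applies. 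Testing \eqref{eq:1st.eq.l} with $\varphi=(w-k)_+$ and using $D(w-k)_+=Dw\,\mathbf{1}_{\{w>k\}}$ gives $\int_{\{w>k\}}A(x,Dw)\cdot Dw\,dx=0$; by coercivity the integrand is $\gtrsim|Dw|^p\ge0$, hence $Dw=0$ a.e.\ on $\{w>k\}$, i.e.\ $D(w-k)_+\equiv 0$ in $Q_{2R}$. Being a constant element of $W^{1,p}_0(Q_{2R})$, $(w-k)_+$ vanishes, so $w\le k$ a.e.; the symmetric choice $\varphi=(-w-k)_+$ gives $w\ge-k$ a.e. Thus $\|w\|_{L^\infty(Q_{2R})}\le k=\|u\|_{L^\infty(Q_{2R})}$, which is the claim (indeed with $c=1$).

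I expect the only mildly delicate point to be the admissibility of the truncations $(w-k)_+$ and $(-w-k)_+$ as test functions — that is, checking that they lie in $W^{1,p}_0(Q_{2R})$ — which is exactly where the meaning of the boundary condition $w=u$ on $\partial Q_{2R}$ enters; everything else is a direct application of \eqref{eq:A}, \eqref{eq:AV} and Young's inequality.
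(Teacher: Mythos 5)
Your proof is correct and follows the same standard approach the paper uses (and, for the maximum principle, simply cites as "standard"): the energy estimate is obtained by testing \eqref{eq:1st.eq.l} with $w-u\in W^{1,p}_0(Q_{2R})$ and invoking the coercivity and growth bounds that follow from \eqref{eq:A} (or equivalently \eqref{eq:AV}) together with Young's inequality, exactly as the paper does for the boundary analogue \eqref{eq:energy.est.h.b}, and the sup bound is the usual comparison argument by truncation at $k=\|u\|_{L^\infty(Q_{2R})}$, with the admissibility of $(w-k)_+$ and $(-w-k)_+$ as $W^{1,p}_0(Q_{2R})$ test functions correctly justified by domination by $(w-u)_\pm\in W^{1,p}_0(Q_{2R})$. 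You even obtain the sharper constant $c=1$ in the second estimate, which of course implies the stated bound.
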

Now we have the following comparison lemma.
\begin{lemma}\label{lem:1st.l}
	For every $\ep$, $\nu\in (0,1)$ and $\varrho<m-qs$, there holds
	\begin{align*}%\label{eq:1st.l}
		&\mean{Q_{2R}}|V(Du)-V(Dw)|^2\,dx \nonumber\\
		 &\leq(\ep+c(\nu)R^{\Bbbk})\mean{Q_{3R}}|Du|^p\,dx +c(\ep,\nu)\mean{Q_{3R}}(|F|^p+1)\,dx+\nu \left[\sum^{L}_{k=2}\beta_k \left(\mean{Q_{3^{k}R}}|Du|^p\,dx\right)^\frac{\theta}{p}\right]^\frac{p}{\theta} \nonumber\\
			&\quad+\nu \left(R^{1-\varrho}\int_{\ern\setminus Q_{3^LR}}\dfrac{|u(y)-(u)_{3^LR}|^{q-1}}{|y-x_0|^{n+sq}}\,dy\right)^\frac{p}{m-1},
		\end{align*}
	where $m:=\min\{p,q\}$, $\theta$ is as in \eqref{eq:theta1}, $\tau$ is given by  \eqref{eq:tau}, $\Bbbk:=\min\{q(\theta-\tau),\varrho p\}>0$, $\beta_k:= 3^{k({\theta-\frac{q\tau}{q-1}})}$ with $\sum_{k=1}^{\infty}\beta_k<\infty$, and $c(\ep,\nu)=c(\data,\mathcal{M}(u;\tilde{\Omega}),\ep,\nu)$ (analogously $c(\nu)$ is defined).
\end{lemma}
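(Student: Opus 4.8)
The plan is to derive the comparison estimate from the weak formulations of \eqref{eq:inter} and \eqref{eq:1st.eq.l} by testing with $\vp=u-w$, which is admissible since $u-w\in W^{1,p}_0(Q_{2R})$ (extended by zero outside). Subtracting the two equations and using the monotonicity-type equivalence \eqref{eq:AV}, namely $(A(x,Du)-A(x,Dw))\cdot(Du-Dw)\simeq|V(Du)-V(Dw)|^2$, the left-hand side $\mean{Q_{2R}}|V(Du)-V(Dw)|^2\,dx$ is controlled by three contributions: (a) the term coming from the divergence datum $\mean{Q_{2R}}|F|^{p-2}F\cdot(Du-Dw)\,dx$; (b) the nonlocal term $\int_{\ern}\int_{\ern}|u(x)-u(y)|^{q-2}(u(x)-u(y))((u-w)(x)-(u-w)(y))K(x,y)\,dx\,dy$, which since $u-w$ is supported in $Q_{2R}$ splits into a ``local'' double integral over $Q_{2R}\times Q_{2R}$ and a ``tail'' part pairing $Q_{2R}$ against $\ern\setminus Q_{2R}$. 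Term (a) is handled by Young's inequality after absorbing $|Du-Dw|^p$ via \eqref{eq:V,theta} and the energy estimate of Lemma \ref{lem:energy-int.wu}, producing the $\ep\mean{Q_{3R}}|Du|^p$ and $c(\ep)\mean{Q_{3R}}(|F|^p+1)$ terms.

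For the local part of the nonlocal term, I would bound $|u(x)-u(y)|^{q-1}$ on $Q_{2R}\times Q_{2R}$ and the oscillation of $u-w$ using the $W^{s,q}$-seminorm, then invoke Lemma \ref{lem:embed2} (together with Corollary \ref{cor:emb1}) to convert the fractional seminorm of $u$ over $Q_{2R}$, and that of $w$ via Lemma \ref{lem:energy-int.wu}, into $r^{\theta}$-weighted powers of $\mean{Q_{3R}}|Du|^p$; the power gain in $R$ here is exactly what produces the factor $R^{\Bbbk}$ with $\Bbbk=\min\{q(\theta-\tau),\varrho p\}$, after Young's inequality is used to pass from the exponent $\theta/p$ to $1/p$ at the cost of a positive power of $R$ (this is where $\tau<\theta$ from \eqref{eq:tau} is essential). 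For the tail part, I would follow the splitting strategy already used in the proof of Lemma \ref{lem:rev.hold}: write $\ern\setminus Q_{2R}=\bigcup_{k=1}^{L-1}(Q_{3^{k+1}R}\setminus Q_{3^kR})\cup(\ern\setminus Q_{3^LR})$, use the geometric comparison $|x-y|\gtrsim|y-x_0|$ for $x\in Q_{2R}$ and $y\notin Q_{2R}$, telescope the averages $(u)_{3^{k+1}R}-(u)_{3^kR}$, apply Corollary \ref{cor:emb1} on each annulus to replace $\mean{Q_{3^{k}R}}|u-(u)_{3^kR}|^{q-1}$ by $(3^kR)^{\theta}(\mean{Q_{3^kR}}|Du|^{p})^{\theta/p}$, and absorb the remaining far tail into $\int_{\ern\setminus Q_{3^LR}}|u(y)-(u)_{3^LR}|^{q-1}|y-x_0|^{-n-sq}\,dy$. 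The exponents $\beta_k=3^{k(\theta-q\tau/(q-1))}$ arise precisely from matching the $(3^kR)^{-sq/(q-1)}$ decay of the kernel integral over annuli against the $(3^kR)^{\theta}$ growth from Poincaré, with the surplus $\tau-s>0$ absorbed and the residual exponent making $\sum_k\beta_k<\infty$.

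Finally, I would collect all pieces, pair the tail factor $(\mean{Q_{2R}}|u-g|\,dx)$-type prefactor (here simply an oscillation of $u$ over $Q_{2R}$) with the tail sum via Young's inequality with exponents $m/p$ and its conjugate, exactly as in \eqref{eq:rev.hol.b.18}, to obtain the final form with $\nu$ multiplying both the $\beta_k$-sum raised to the power $p/\theta$ and the far tail raised to the power $p/(m-1)$; the condition $\varrho<m-qs$ guarantees the power of $R$ produced in that Young step is positive so it can be merged into the $c(\ep,\nu)\mean{Q_{3R}}(|F|^p+1)$ term (using $R<1$). The main obstacle I expect is the careful bookkeeping of the tail term: ensuring that after the annular decomposition and telescoping, every intermediate power of $R$ has the correct sign and that the summability of $\beta_k$ is preserved, all while keeping the constants dependent only on $\data$ and $\mathcal{M}(u;\tilde\Omega)$ (the latter entering through the $L^\infty$-bounds in Corollary \ref{cor:emb1} and Lemma \ref{lem:embed2} when $q>p$). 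A secondary technical point is justifying that $u-w$ is a legitimate test function in both weak formulations simultaneously, which requires knowing $u\in W^{s,q}_{\loc}$ — guaranteed by Lemma \ref{lem:embed2} — and that $w$ inherits enough integrability from Lemma \ref{lem:energy-int.wu}.
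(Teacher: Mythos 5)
Your overall strategy matches the paper's: test with $u-w\in W^{1,p}_0(Q_{2R})$ (extended by zero), use \eqref{eq:AV} to pass to $|V(Du)-V(Dw)|^2$, handle the divergence term by Young's inequality and the energy estimate of Lemma \ref{lem:energy-int.wu}, and split the nonlocal form into a near part and a far ``tail'' part that you treat via annular decomposition, telescoping of averages, Corollary \ref{cor:emb1}/Lemma \ref{lem:embed2}, and a final Young step as in \eqref{eq:rev.hol.b.18}. The bookkeeping you flag as the main obstacle (the role of $\tau>s$ from \eqref{eq:tau}, the summability of $\beta_k$, and $\varrho<m-qs$) is indeed where the work is, and you identify it correctly.

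There is, however, one concrete flaw that would make the tail estimate fail as written. You split the bilinear form into a ``local'' part over $Q_{2R}\times Q_{2R}$ and a ``tail'' part pairing $Q_{2R}$ against $\ern\setminus Q_{2R}$, and then rely on the geometric comparison $|x-y|\gtrsim|y-x_0|$ for $x\in Q_{2R}$, $y\notin Q_{2R}$. That comparison is false: taking $x,y$ on opposite sides of $\partial Q_{2R}$ and close to each other gives $|x-y|\to0$ while $|y-x_0|\simeq 2R$. As a result, the kernel integral $\int_{\ern\setminus Q_{2R}}|x-y|^{-n-sq}\,dy$ blows up like $\dist(x,\partial Q_{2R})^{-sq}$ as $x\to\partial Q_{2R}$, so the tail term is not controlled by the stated quantities without an additional (Hardy-type) argument. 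Relatedly, your decomposition $\ern\setminus Q_{2R}=\bigcup_{k=1}^{L-1}(Q_{3^{k+1}R}\setminus Q_{3^kR})\cup(\ern\setminus Q_{3^LR})$ only covers $\ern\setminus Q_{3R}$ and silently drops the annulus $Q_{3R}\setminus Q_{2R}$, which is precisely where the singularity lives. The clean fix is what the paper does: enlarge the near part to $Q_{3R}\times Q_{3R}$ (harmless, since $u-w$ is supported in $Q_{2R}$) and take the tail part to be $Q_{2R}\times(\ern\setminus Q_{3R})$; then for $x\in Q_{2R}$ and $y\notin Q_{3R}$ one has $|x-y|\geq\frac{3-2\sqrt2}{3}|y-x_0|$ (see \eqref{eq:1st.l.6}), and the annular decomposition starting at $Q_{3R}$ is exhaustive. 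One further small imprecision: the $R^{\Bbbk}$ factor in front of $\mean{Q_{3R}}|Du|^p$ does not arise from the near part $I_1$ (that gives $R^{q(\theta-s)}$, which is absorbed since $q(\theta-s)\geq\Bbbk$); it comes out of the Young step applied to the tail term $I_4$, and this term cannot be ``merged into $c(\ep,\nu)\mean{Q_{3R}}(|F|^p+1)$'' because it carries the factor $\mean{Q_{3R}}|Du|^p$. It must be kept as the $c(\nu)R^{\Bbbk}\mean{Q_{3R}}|Du|^p$ contribution appearing in the statement.
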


\begin{remark}
	Note that the term
	\begin{align*}
		\sum^L_{k=2}\beta_k\left(\mean{Q_{3^kR}}|Du|^p\,dx\right)^\frac{\theta}{p}
	\end{align*}
	is taken to be zero if $L=1$.	
\end{remark}

\begin{proof}
	We first observe that $u-w\in W^{1,p}_0(Q_{2R})$. Thus, it can be extended to $\tilde{\Omega}$ by setting $u-w= 0$ outside $Q_{2R}$. Moreover, for the case $q>p$, $u\in L^{\infty}(\tilde{\Omega})$ and hence $u-w\in L^{\infty}(\tilde{\Omega})$ by Lemma \ref{lem:energy-int.wu}. Consequently, by Lemma \ref{lem:embed2}, $u-w\in W^{1,p}(Q_{3R})\cap W^{s,q}(Q_{3R})$ is an admissible test function for \eqref{eq:inter} and \eqref{eq:1st.eq.l}. Therefore, with the help of \eqref{eq:AV}, we obtain
	\begin{align*}%\label{eq:1st.l.1}
		&\Lambda^{-1}\mean{Q_{2R}}|V(Du)-V(Dw)|^2\,dx \lesssim \mean{Q_{2R}} (A(x,Du)-A(x,Dw))\cdot(Du-Dw)\,dx\nonumber\\
		&\quad\lesssim |Q_{2R}|^{-1}\int_{\ern}\int_{\ern}|u(x)-u(y)|^{q-1}|u(x)-w(x)-u(y)+w(y)|K(x,y)\,dx\,dy \nonumber\\
		&\quad\quad+\mean{Q_{2R}}|F|^{p-1}|D(u-w)|\,dx=:I+J.
	\end{align*}

	By Young's inequality, for $\ep>0$ we get
	\begin{align}\label{eq:1st.l.2}
		J\leq\ep\mean{Q_{2R}}|Du-Dw|^p\,dx+c(\ep)\int_{Q_{2R}}|F|^p\,dx.
	\end{align}
	To estimate $I$, we split the integral into two parts as below:
	\begin{align*}%\label{eq:1st.l.3}
		I_1:=\left(\dfrac{3}{2}\right)^n\Lambda\mean{Q_{3R}}\int_{Q_{3R}}\dfrac{|u(x)-u(y)|^{q-1}}{|x-y|^{n+sq}}|u(x)-w(x)-u(y)+w(y)|\,dx\,dy
	\end{align*}
	and
	\begin{align*}%\label{eq:1st.l.4}
		I_2:=2\Lambda\mean{Q_{2R}}\int_{\ern\setminus Q_{3R}}\dfrac{|u(x)-u(y)|^{q-1}}{|x-y|^{n+sq}}|u(x)-w(x)|\,dx\,dy.
	\end{align*}
	For $I_1$, applying H\"{o}lder's inequality, Lemma \ref{lem:embed2}, Lemma \ref{lem:energy-int.wu} and Young's inequality, we obtain
	\begin{align}\label{eq:1st.l.5}
		I_1&\lesssim \left(\mean{Q_{3R}}\int_{Q_{3R}}\dfrac{|u(x)-u(y)|^q}{|x-y|^{n+sq}}\,dx\,dy\right)^{\frac{q-1}{q}}\left(\mean{Q_{3R}}\int_{Q_{3R}}\dfrac{|u(x)-w(x)-u(y)+w(y)|^q}{|x-y|^{n+sq}}\,dx\,dy\right)^{\frac{1}{q}} \nonumber\\
			&\lesssim R^{(q-1)(\theta-s)}\left(\mean{Q_{3R}}|Du|^p\,dx\right)^{\frac{\theta(q-1)}{p}}\left(\mean{Q_{3R}}|Du-Dw|^p\,dx\right)^{\frac{\theta}{p}}R^{(\theta-s)}\|u\|^{(1-\theta)q}_{L^{\infty}(Q_{3R})} \nonumber\\
			&\lesssim R^{q(\theta-s)} \left(\mean{Q_{3R}}|Du|^p\,dx\right)^{\frac{\theta q}{p}}
			\lesssim R^{q(\theta-s)}\mean{Q_{3R}}(|Du|^p+1)\,dx.
		\end{align}
	Next, similar to \eqref{eq:1st.l.6}, for $x\in Q_{2R}\equiv Q_{2R}(x_0)$ and $y\in Q^{c}_{3R}\equiv Q^c_{3R}(x_0)$, we have
	%	\begin{align}\label{eq:1st.l.7}
		%		\begin{split}
			%		&|x-x_0|\leq 2\sqrt{2}R,\mbox{ and }\\
			%	 &|y_1-x_{0,1}|\geq 3R \quad\mbox{or}\quad|y'-x_0'|\geq 3R,
			%	 \end{split}
		%	\end{align}
	%which implies that
	\begin{align*}%\label{eq:1st.l.6}
		|x-y|\geq\dfrac{3-2\sqrt{2}}{3}|y-x_0|.
	\end{align*}
	Consequently,
	\begin{align}\label{eq:1st.l.8}
	I_2&\lesssim\mean{Q_{3R}}\int_{\ern\setminus Q_{3R}}\dfrac{|u(x)-(u)_{3R}|^{q-1}+|u(y)-(u)_{3R}|^{q-1}}{|x_0-y|^{n+sq}}|u(x)-w(x)|\,dx\,dy\nonumber\\
			&\lesssim \mean{Q_{3R}}|u(x)-(u)_{3R}|^{q-1}|u(x)-w(x)|\,dx\int_{\ern\setminus Q_{3R}}\dfrac{1}{|y-x_0|^{n+sq}}\,dy \nonumber\\
			&\quad+\mean{Q_{3R}}|u(x)-w(x)|\,dx\int_{\ern\setminus Q_{3R}}\dfrac{|u(y)-(u)_{3R}|^{q-1}}{|y-x_0|^{n+sq}}\,dy\nonumber\\
			&=:I_3+I_4.
		\end{align}
	%On account of the relation \eqref{eq:1st.l.7}, we have 
	%\begin{align*}
	%	\int_{Q^c_{3R}}\dfrac{dy}{|y-x_0|^{n+sq}}\lesssim R^{n-1} \int_{(x_{0,1}-3R,x_{0,1}+3R)^c}\dfrac{dy_1}{|y_1-x_{0,1}|^{n+sq}}+R\int_{B'_{3R}(x_0')^c}\dfrac{dy'}{|y'-x_0'|^{n+sq}}\lesssim R^{-sq}.
	%\end{align*}
	Using H\"{o}lder's inequality, Corollary \ref{cor:emb1} and Lemma \ref{lem:energy-int.wu}, we get
	\begin{align}\label{eq:1st.l.9}
		  I_3&\lesssim R^{-sq}\left(\mean{Q_{3R}}|u(x)-(u)_{3R}|^q\,dx\right)^{\frac{q-1}{q}}\left(\mean{Q_{2R}}|u(x)-w(x)|^q\,dx\right)^{\frac{1}{q}} \nonumber\\
			%&\lesssim R^{\theta-s} \left(\mean{Q_{3R}}\int_{Q_{3R}}\dfrac{|u(x)-u(y)|^q}{|x-y|^{n+sq}}\,dx\,dy\right)^{\frac{q-1}{q}}\|u-w\|^{1-\theta}_{L^{\infty}(Q_{2R})}\left(\mean{Q_{2R}}|Du-Dw|^p\,dx\right)^{\frac{\theta}{p}}\\
			&\lesssim  R^{q(\theta-s)}  \left[\|u\|^{1-\theta}_{L^{\infty}(Q_{3R})}\left(\mean{Q_{3R}}|Du|^p\,dx\right)^{\frac{\theta}{p}}\right]^{q-1}  \|u-w\|^{1-\theta}_{L^{\infty}(Q_{3R})} \left(\mean{Q_{3R}}|Du-Dw|^p\,dx\right)^{\frac{\theta}{p}}\nonumber\\
			&\lesssim R^{q(\theta-s)} \left(\mean{Q_{3R}}|Du|^p\,dx\right)^{\frac{\theta q}{p}}
			\lesssim R^{q(\theta-s)}\mean{Q_{3R}}(|Du|^p+1)\,dx,
		\end{align}
	where for the last relation we have used Young's inequality.

	Next, set
	\begin{align*}%\label{eq:1st.l.10}
	\tilde{I}_4:=\left(\int_{\ern\setminus Q_{3R}}\dfrac{|u(y)-(u)_{3R}|^{q-1}}{|y-x_0|^{n+sq}}\,dy\right)^\frac{1}{q-1}.
	\end{align*}
	Then, proceeding as in \eqref{eq:rev.hol.b.11} and \eqref{eq:rev.hol.b.12}, and using Corollary \ref{cor:emb1} we obtain  
	\begin{align}\label{eq:1st.l.13}
		\tilde{I}_4
		&\lesssim \sum^{L-1}_{k=1}(3^kR)^\frac{-q\tau}{q-1}\left(\mean{Q_{3^{k+1}R}}|u(y)-(u)_{3^{k+1}R}|^q\,dy\right)^{\frac{1}{q}}+\left(\int_{\ern\setminus Q_{3^LR}}\dfrac{|u(y)-(u)_{3^LR}|^{q-1}}{|y-x_0|^{n+sq}}\,dy\right)^\frac{1}{q-1}\nonumber\\
		&\lesssim \sum^{L-1}_{k=1}(3^kR)^{\theta-\frac{q\tau}{q-1}}\|u\|_{L^{\infty}(\tilde{\Omega})}^{(1-\theta)}\left(\mean{Q_{3^{k+1}R}}|Du|^p\,dx\right)^{\frac{\theta}{p}}+\left(\int_{\ern\setminus Q_{3^LR}}\dfrac{|u(y)-(u)_{3^LR}|^{q-1}}{|y-x_0|^{n+sq}}\,dy\right)^\frac{1}{q-1},
	\end{align}
	where we have additionally used the facts that $Q_{3^LR}\Subset\tilde{\Omega}$, $3^LR\leq 1$ and $\tau>s$. 
Moreover, as in $I_3$, we have
	\begin{align}\label{eq:1st.l.16}
		\mean{Q_{3R}}|u(x)-w(x)|\,dx\leq c R^{\theta}\|u\|^{1-\theta}_{L^{\infty}(Q_{3R})}\left(
		\mean{Q_{3R}}|Du|^p\,dx\right)^{\frac{\theta}{p}}.
	\end{align}
	Coupling \eqref{eq:1st.l.13} and \eqref{eq:1st.l.16}, and applying Lemma \ref{lem:embed3} with Lemma \ref{lem:energy-int.wu}, we have
	\begin{align*}
		 I_4&\lesssim R^{-q\tau+q\theta}\left(\mean{Q_{3R}}|Du|^p\,dx\right)^{\frac{\theta}{p}}
		 \left(\sum^{L-1}_{k=1}(3^k)^{\theta-\frac{q\tau}{q-1}}\left(\mean{Q_{3^{k+1}R}}|Du|^p\,dx\right)^{\frac{\theta}{p}}\right)^{q-1}\nonumber\\ &\quad+R\left(\mean{Q_{3R}}|Du|^p\,dx\right)^{\frac{1}{p}}\left(\int_{\ern\setminus Q_{3^LR}}\dfrac{|u(y)-(u)_{3^LR}|^{q-1}}{|y-x_0|^{n+sq}}\,dy\right).
	\end{align*}
Then, similar to \eqref{eq:rev.hol.b.18}, using Young's inequality and recalling the definition of various constants defined in the Lemma, we obtain 
\begin{align}\label{eq:1st.l.17}
   	I_4 &\leq \nu \left[\sum^{L-1}_{k=1}(3^k)^{\theta-\frac{q\tau}{q-1}} \left(\mean{Q_{3^{k+1}R}}|Du|^p\,dx\right)^\frac{\theta}{p}\right]^\frac{p}{\theta} \nonumber\\
			&\quad+c(\nu) +c(\nu) R^{\Bbbk}\mean{Q_{3R}}|Du|^p\,dx+\nu \left(R^{1-\varrho}\int_{\ern\setminus Q_{3^LR}}\dfrac{|u(y)-(u)_{3^LR}|^{q-1}}{|y-x_0|^{n+sq}}\,dy\right)^\frac{p}{m-1}.
\end{align}
 % where we have also used the relation \eqref{eq:tau} to get {\color{red} $\sum_{k=1}^{\infty}(3^k)^{\theta-\frac{q\tau}{q-1}}<\infty$.}
 Finally, by using \eqref{eq:1st.l.9} and \eqref{eq:1st.l.17} in \eqref{eq:1st.l.8}, and combining it with \eqref{eq:1st.l.5} and \eqref{eq:1st.l.2}, we conclude the proof of the lemma.
\end{proof}

%Now we recall the following regularity result of \cite[Theorem 2.1]{KimY1}.
%\begin{lemma}\label{lem:Lgamma}
%	For any $\tilde{\gamma}\in (p,\infty)$, there exists a small constant $\delta=\delta(n,p,\Lambda,\tilde{\gamma})\in(0,1]$ such that if $\tilde{w}$ is a weak solution of
%	\begin{align*}
		%-\divo(A(x,D\tilde{w}))=0\quad\text{in }Q_{2R}
%	\end{align*}
%	and
%	\begin{align*}
%		\sup_{0<r\leq R}\sup_{y\in Q_{2R}}\mean{Q_{r}(y)}\kappa(A,Q_{r}(y))(x)\,dx\leq\delta,
%	\end{align*}
%	then we have
%	\begin{align*}
		%\mean{Q_{R}}|D\tilde{w}|^{\tilde{\gamma}}\,dx\leq c\left(\mean{Q_{2R}}|D\tilde{w}|^p\,dx\right)^{\frac{\tilde{\gamma}}{p}}
%	\end{align*}
%	for any $Q_{2R}\Subset\Omega$ and for some constant $c=c(n,p,\Lambda,\tilde{\gamma})$.{\color{red}$\Rightarrow$ Skip!}
%\end{lemma}
% {\color{blue}Since in Lemma 5.3, we are directly quoting [41, Theorem 2.1], we have to decide whether to keep this Lemma 4.4 or not. Otherwise, we'll have mention it somewhere, at least in the next result.}
\textbf{Proof of Theorem \ref{thm:l}}: The proof is similar to the boundary case Theorem \ref{thm:b}, which will be presented in detail in the next section. Indeed, we use the comparison estimate of Lemma \ref{lem:1st.l} instead of Lemma \ref{lem:1st.bdry}, Lemma \ref{lem:2nd} for the case $Q_R\Subset\Omega$ and the similar covering argument as in Subsection \ref{sub.cov.CZ} without the $g$-term.\qed

%\textcolor{blue}{Why don't we write a remark at the end of Section 5 (boundary case), which states how we can prove the remaining part of the interior case?}

\section{Global Gradient Estimate}
Throughout this section we assume that $u\in \mathbb{X}_{g}(\Omega;\Omega')\cap\mathbb{X}(\Omega)$ is the weak solution of \eqref{eq:bdary} with $|F|\in L^{\gamma}(\Omega)$, $g\in \mathbb{X}(\Omega')$ and $|Dg|\in L^{\gamma}(\Omega')$. Also, $\Omega$ is $(\delta,r_*)$-Reifenberg flat where $\delta\in(0,\frac{1}{2^{n+1}})$ and $r_*>0$ are determined later. With the notation of \eqref{eq:data}, we set
 \begin{align*}
 \data_2:=\data_2\big(\data,\mathcal{M}(u;\Omega),\mathcal{M}(g;\Omega')\big).
 \end{align*}

\subsection{The Comparison Estimates}
Let $x_0\in\overline\Omega$ and $R\in (0,1/9)$ such that $Q_{2R}(x_0)\not\subset\Omega$ and $Q_{3R}(x_0)\Subset\Omega'$. For the convenience, we abbreviate $$\Omega_{R}=\Omega_{R}(x_0).$$  Let $h_b\in W^{1,p}(\Omega_{2R})$ be the weak solution of
\begin{equation}\label{eq:1st.eq.b}
\left\{\begin{array}{rlll}
-\divo (A(x,Dh_b))&=0\,\,&&\text{in}\quad\Omega_{2R},\\
h_b&=u-g\,\,&&\text{on}\quad\partial(\Omega_{2R}).
\end{array}
\right.
\end{equation}
Then, we have the following standard energy estimate
\begin{align}\label{eq:energy.est.h.b}
\mean{\Omega_{2R}}|Dh_b|^p\,dx\leq c\mean{\Omega_{2R}}|Du|^p\,dx+c\mean{\Omega_{2R}}|Dg|^p\,dx
\end{align}
with $c=c(n,p,\Lambda)$. Indeed, by testing $h_b-u+g\in W^{1,p}_0(\Omega_{2R})$ to \eqref{eq:1st.eq.b},
\begin{align*}
\mean{\Omega_{2R}}A(x,Dh_b)\cdot D(h_b-u+g)\,dx=0
\end{align*}
holds and so by \eqref{eq:A} and Young's inequality, we find
\begin{align*}
\mean{\Omega_{2R}}|Dh_b|^p\,dx&\leq c\mean{\Omega_{2R}}|Dh_b|^{p-1}|Du|\,dx+c\mean{\Omega_{2R}}|Dh_b|^{p-1}|Dg|\,dx\\
&\leq c\,\ep \mean{\Omega_{2R}}|Dh_b|^{p}\,dx+c(\ep)\mean{\Omega_{2R}}|Du|^p\,dx+c(\ep)\mean{\Omega_{2R}}|Dg|^p\,dx
\end{align*}
for any $\ep\in(0,1)$ with $c=c(n,p,\Lambda)$ and $c(\ep)=c(n,p,\Lambda,\ep)>0$. In the above display, by choosing $\ep=\ep(n,p,\Lambda)$ sufficiently small and absorbing the first term of the right-hand side to the left-hand side, we obtain \eqref{eq:energy.est.h.b}. \par 
We also have the following self-improving property for $Dh_b$ (see \cite{KK}):
\begin{itemize}
 \item Let $\varsigma_1\in (0,1)$ be given by Proposition \ref{prop:h.i.b}. Then, there exists  $\sigma_0=\sigma_0(n,p,\Lambda,\varsigma_1)>0$, small enough such that for all $\sigma<\sigma_0$, there holds
\begin{align}\label{eq:hi.w.b}
\begin{split}
\left(\mean{\Omega_{R}}|Dh_b|^{p(1+\sigma)}\,dx\right)^\frac{1}{p(1+\sigma)}&\leq c\left(\mean{\Omega_{2R}}|Dh_b|^{p}\,dx\right)^\frac{1}{p}+c\left(\mean{\Omega_{2R}}|Du|^{p(1+\sigma)}\,dx\right)^\frac{1}{p(1+\sigma)}\\
&\quad+c\left(\mean{\Omega_{2R}}|Dg|^{p(1+\sigma)}\,dx\right)^\frac{1}{p(1+\sigma)}
\end{split}
\end{align}
with $c=c(n,p,\Lambda)>0$.
\end{itemize}

We now prove the first comparison lemma.
\begin{lemma}\label{lem:1st.bdry}
Let $L\in\mathbb{N}$ be such that $3^LR\leq 1$ and $Q_{3^LR}\Subset\Omega'$.
Then for every $\ep\in(0,1)$ and $\nu\in (0,1)$, there holds
\begin{align}\label{eq:1st.b.cmp}
&\mean{\Omega_{2R}}|V(Du)-V(Dh_b)|^2\,dx \nonumber\\
&\quad\leq(\ep+c(\nu) R^{\Bbbk})\mean{\Omega_{3R}}|Du|^p\,dx+ c(\ep,\nu)\mean{\Omega_{3R}}(|F|^p+1)\,dx  \nonumber\\
&\quad\quad+c(\ep,\nu)\mean{Q_{3R}}|Dg|^p\,dx+\nu \left(R^{1-\varrho}\int_{\ern\setminus Q_{3^LR}}\dfrac{|u(y)-(u)_{3^LR}|^{q-1}}{|y-x_0|^{n+sq}}\,dy\right)^\frac{p}{m-1}  \nonumber\\
&\quad\quad+\nu \left[\sum^{L}_{k=2}\beta_k\left( \left(\mean{Q_{3^{k}R}}|Dg|^p\,dx\right)^\frac{\theta}{p}+\left(\mean{\Omega_{3^{k}R}}|Du|^p\,dx\right)^\frac{\theta}{p}\right)\right]^\frac{p}{\theta},
\end{align}
where $\Bbbk,\varrho,m,\beta_k$ and $\theta$ are as in Lemma \ref{lem:1st.l}, and $c(\ep,\nu)=c(\data_2,\ep,\nu)$ (analogously $c(\nu)$ is defined).
\end{lemma}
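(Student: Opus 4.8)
The plan is to run the same comparison scheme as in the interior Lemma~\ref{lem:1st.l}, but now respecting the boundary portion $\partial_w\Omega_{2R}$. Since $u-g-h_b\in W^{1,p}_0(\Omega_{2R})$, I extend it by zero outside $\Omega_{2R}$; in the case $q>p$ the boundedness $u,g\in L^\infty$ together with the maximum principle for \eqref{eq:1st.eq.b} gives $h_b\in L^\infty(\Omega_{2R})$, so by Lemma~\ref{lem:embed2} the function $u-g-h_b$ lies in $W^{1,p}(Q_{3R})\cap W^{s,q}(Q_{3R})$ and is an admissible test function in the weak formulations of \eqref{eq:bdary} and \eqref{eq:1st.eq.b}. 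Subtracting these and invoking \eqref{eq:AV}, I obtain
\begin{align*}
\mean{\Omega_{2R}}|V(Du)-V(Dh_b)|^2\,dx
&\lesssim \mean{\Omega_{2R}}\big(A(x,Du)-A(x,Dh_b)\big)\cdot(Du-Dh_b)\,dx\\
&\lesssim |\Omega_{2R}|^{-1}\int_{\ern}\int_{\ern}|u(x)-u(y)|^{q-1}|(u-g-h_b)(x)-(u-g-h_b)(y)|K(x,y)\,dx\,dy\\
&\quad+\mean{\Omega_{2R}}|F|^{p-1}|D(u-g-h_b)|\,dx
+\mean{\Omega_{2R}}|A(x,Du)-A(x,Dh_b)|\,|Dg|\,dx\\
&=:I+J_1+J_2.
\end{align*}
Here $J_2$ is the only genuinely new term compared with the interior case; it is handled by \eqref{eq:A}, the energy estimate \eqref{eq:energy.est.h.b}, and Young's inequality, producing $\ep\mean{\Omega_{3R}}|Du|^p\,dx+c(\ep)\mean{Q_{3R}}|Dg|^p\,dx$ after absorbing. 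The term $J_1$ is treated exactly as $J$ in Lemma~\ref{lem:1st.l}, yielding $\ep\mean{\Omega_{2R}}|D(u-g-h_b)|^p\,dx+c(\ep)\mean{\Omega_{3R}}|F|^p\,dx$, and the first factor is absorbed using \eqref{eq:V,theta}, \eqref{eq:energy.est.h.b} and the Poincar\'e inequality for $u-g$ vanishing on $\partial_w\Omega_{2R}$ (Lemma~\ref{lem:embed3}).

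For the nonlocal term $I$ I split it at scale $3R$: an inner piece $I_1$ over $Q_{3R}\times Q_{3R}$ and an outer (tail) piece $I_2$ where $y\in Q_{3R}^c$, using the geometric bound $|x-y|\ge\tfrac{3-2\sqrt2}{3}|y-x_0|$ for $x\in Q_{2R}$, $y\in Q_{3R}^c$ as in \eqref{eq:1st.l.6}. For $I_1$, H\"older's inequality in the Gagliardo seminorm, Lemma~\ref{lem:embed2} applied to $u$ and to $u-g-h_b$, the density property \eqref{eq:m.den1} (so that averages over $Q_{3R}$ compare to averages over $\Omega_{3R}$), the bound $\mathcal M(h_b;\Omega_{2R})\lesssim\mathcal M(u;\Omega)+\mathcal M(g;\Omega')$, and Young's inequality give a bound of the form $R^{q(\theta-s)}\mean{\Omega_{3R}}(|Du|^p+1)\,dx+R^{q(\theta-s)}\mean{Q_{3R}}(|Dg|^p+1)\,dx$, which is absorbed into the $R^{\Bbbk}$ term. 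For $I_2$ I split the numerator $|u(x)-u(y)|^{q-1}\le c(|u(x)-(u)_{3R}|^{q-1}+|u(y)-(u)_{3R}|^{q-1})$, getting $I_3+I_4$ as in \eqref{eq:1st.l.8}; $I_3$ is estimated by Corollary~\ref{cor:emb1} and Lemma~\ref{lem:embed3} exactly like \eqref{eq:1st.l.9}, while for $I_4$ I telescope the tail $\tilde I_4^{\,q-1}=\int_{\ern\setminus Q_{3R}}|u(y)-(u)_{3R}|^{q-1}|y-x_0|^{-n-sq}\,dy$ over the annuli $Q_{3^{k+1}R}\setminus Q_{3^kR}$ exactly as in \eqref{eq:rev.hol.b.11}--\eqref{eq:rev.hol.b.12}, now using Corollary~\ref{cor:emb1} on cylinders meeting $\Omega^c$ via \eqref{eq:b.10}, \eqref{eq:rev.hol.b.1}, \eqref{eq:rev.hol.b.3} to pass to $\mean{\Omega_{3^kR}}|Du|^{p}\,dx$ and $\mean{Q_{3^kR}}|Dg|^{p}\,dx$; a final application of Young's inequality with exponents $m/p$ and $m'$ (as in \eqref{eq:rev.hol.b.18}) converts the product of $\mean{\Omega_{3R}}|Du|^p$-type terms with the remaining tail into the $\nu$-weighted tail term $\big(R^{1-\varrho}\int_{\ern\setminus Q_{3^LR}}|u(y)-(u)_{3^LR}|^{q-1}|y-x_0|^{-n-sq}\,dy\big)^{p/(m-1)}$ plus the $\nu$-weighted sum $\big[\sum_{k=2}^{L}\beta_k((\mean{\Omega_{3^kR}}|Du|^p)^{\theta/p}+(\mean{Q_{3^kR}}|Dg|^p)^{\theta/p})\big]^{p/\theta}$.

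Collecting the estimates for $I_1,I_3,I_4,J_1,J_2$ and choosing $\ep$ and then $R$ small completes the proof. The main obstacle I anticipate is bookkeeping near the boundary: one must consistently use the measure-density estimates \eqref{eq:m.den1} to replace cylinder averages by domain-intersection averages, ensure the Poincar\'e-type inequality of Lemma~\ref{lem:embed3} applies (i.e.\ $Q_{2R}\not\subset\Omega$, which is exactly the standing hypothesis here), and carry the dependence $\mathcal M(h_b;\Omega_{2R})\lesssim\mathcal M(u;\Omega)+\mathcal M(g;\Omega')$ through every use of Corollary~\ref{cor:emb1} and Lemma~\ref{lem:embed2}; none of these steps is deep, but the annuli telescoping for $I_4$ combined with the exterior data $g$ is where the estimate is tightest and where the exponent $\tau>s$ from \eqref{eq:tau} and $\varrho<m-qs$ are used crucially.
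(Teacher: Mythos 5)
Your proposal follows essentially the same route as the paper's proof: test with $w=u-g-h_b$, split the right-hand side into the $F$-term, the $Dg$-term arising from $D(u-h_b) = Dw + Dg$, and the nonlocal term; split the nonlocal term at scale $3R$; and telescope the tail over annuli, passing via \eqref{eq:b.10} to $\Omega$- and $Q$-averages, with a final Young inequality producing the $\nu$-weighted contributions. The only cosmetic difference is that the paper bounds the inner nonlocal piece by splitting $|u(x)-u(y)|^{q-1}$ into $(u-g)$- and $g$-contributions before invoking Lemma~\ref{lem:embed2}, whereas you apply Lemma~\ref{lem:embed2} to $u$ directly (valid since $u=g$ off $\Omega$ makes $u\in\mathbb{X}(Q_{3R})$); both give the same bound.
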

\begin{proof}
Set $w:=u-g-h_b\in W^{1,p}_0(\Omega_{2R})$. We extend $w$ by $0$ outside $\Omega_{2R}$.  Moreover, for the case $q>p$, since we have assumed that $u\in L^{\infty}(\Omega)$ and $g\in L^{\infty}(\Omega_{2R})$, by the standard maximum principle, 
\begin{align}\label{eq:max.pr.b}
\|h_b\|_{L^{\infty}(\Omega_{2R})}\lesssim \|u\|_{L^{\infty}(\Omega_{2R})}+\|g\|_{L^{\infty}(\Omega_{2R})}
\end{align}
holds with $c=c(n,p,\Lambda)$ and so $w\in L^{\infty}(\Omega_{2R})$. Thus, by Lemma \ref{lem:embed2}, it can easily be verified that $w\in\mathbb{X}_0(\Omega;\Omega')$.
Taking $w$ as a test function in the weak formulation of \eqref{eq:bdary} and \eqref{eq:1st.eq.b}, we obtain
\begin{align*}
&\int_{\Omega_{2R}}\left(A(x,Du)-A(x,Dh_b)\right)\cdot Dw\,dx\\
&\quad+\int_{\ern}\int_{\ern}|u(x)-u(y)|^{q-2}(u(x)-u(y))(w(x)-w(y))K(x,y)\,dx\,dy\\
&\quad\quad=\int_{\Omega_{2R}}|F|^{p-2}F\cdot Dw\,dx.
\end{align*}

Then \eqref{eq:A} and \eqref{eq:K} yield
\begin{align*}%\label{eq:1st.b.1}
\int_{\Omega_{2R}}|V(Du)-V(Dh_b)|^2\,dx
&\eqsim\int_{\Omega_{2R}}\left(A(x,Du)-A(x,Dh_b)\right)\cdot D(u-h_b)\,dx \nonumber\\
&=\int_{\Omega_{2R}}|F|^{p-2}F\cdot Dw\,dx+\int_{\Omega_{2R}}\big(A(x,Du)-A(x,Dh_b)\big)\cdot Dg\,dx \nonumber\\
&\ +\int_{\ern}\int_{\ern}|u(x)-u(y)|^{q-2}(u(x)-u(y))(w(x)-w(y))K(x,y)\,dx\,dy \nonumber\\
&\lesssim\underbrace{\int_{\Omega_{2R}}|F|^{p-1}|Dw|\,dx+\int_{\Omega_{2R}}(|A(x,Du)+|A(x,Dh_b)|)|Dg|\,dx}_{=:I} \nonumber\\
&\quad+\underbrace{\int_{\ern}\int_{\ern}\dfrac{|u(x)-u(y)|^{q-1}|w(x)-w(y)|}{|x-y|^{n+sq}}\,dx\,dy}_{=:J}.
\end{align*}
Using  Young's inequality and \eqref{eq:energy.est.h.b}, we have
\begin{align*}%\label{eq:1st.b.2}
I&\leq c(\ep)\int_{\Omega_{2R}}|F|^p\,dx+\ep\int_{\Omega_{2R}}|Dw|^p\,dx+c(\ep)\int_{\Omega_{2R}}|Dg|^p\,dx+\ep\int_{\Omega_{2R}}(|Du|^p+|Dh_b|^p)\,dx \nonumber\\
&\lesssim c(\ep)\int_{\Omega_{2R}}|F|^p\,dx+c(\ep)\int_{\Omega_{2R}}|Dg|^p\,dx+\ep\int_{\Omega_{2R}}|Du|^p\,dx
\end{align*}
for any $\ep\in(0,1)$ with $c(\ep)=c(n,p,\Lambda,\ep)>0$. For the nonlocal term, we write
\begin{align*}%\label{eq:1st.b.3}
J&=\underbrace{\int_{Q_{3R}}\int_{Q_{3R}}\dfrac{|u(x)-u(y)|^{q-1}|w(x)-w(y)|}{|x-y|^{n+sq}}\,dxdy}_{=:J_1}\\
&\quad+\underbrace{2\int_{Q_{2R}}\int_{\ern\setminus Q_{3R}}\dfrac{|u(x)-u(y)|^{q-1}}{|x-y|^{n+sq}}|w(x)|\,dydx}_{=:J_2}.
\end{align*}
Then, by H\"{o}lder's inequality, Lemma \ref{lem:embed2}, \eqref{eq:energy.est.h.b} and \eqref{eq:max.pr.b}, we estimate
\begin{align*}%\label{eq:1st.b.4}
  J_1&\lesssim\int_{Q_{3R}}\int_{Q_{3R}}\dfrac{|u(x)-g(x)-u(y)+g(y)|^{q-1}}{|x-y|^{n+sq}}|w(x)-w(y)|\,dx\,dy \nonumber\\
&\quad+\int_{Q_{3R}}\int_{Q_{3R}}\dfrac{|g(x)-g(y)|^{q-1}}{|x-y|^{n+sq}}|w(x)-w(y)|\,dx\,dy \nonumber\\
&\lesssim|Q_{3R}|\left(\mean{Q_{3R}}\int_{Q_{3R}}\dfrac{|u(x)-g(x)-u(y)+g(y)|^q}{|x-y|^{n+sq}}\,dxdy\right)^{\frac{1}{q'}}\left(\mean{Q_{3R}}\int_{Q_{3R}}\dfrac{|w(x)-w(y)|^q}{|x-y|^{n+sq}}\,dxdy\right)^{\frac{1}{q}} \nonumber\\
&\quad+|Q_{3R}|\left(\mean{Q_{3R}}\int_{Q_{3R}}\dfrac{|g(x)-g(y)|^q}{|x-y|^{n+sq}}\,dx\,dy\right)^{\frac{1}{q'}}\left(\mean{Q_{3R}}\int_{Q_{3R}}\dfrac{|w(x)-w(y)|^q}{|x-y|^{n+sq}}\,dx\,dy\right)^{\frac{1}{q}} \nonumber\\
&\lesssim|Q_{3R}|R^{(\theta-s)q}\|u-g\|^{(1-\theta)(q-1)}_{L^{\infty}(Q_{3R})}\left(\mean{Q_{3R}}|Du-Dg|^p\,dx\right)^{\frac{\theta(q-1)}{p}}\|w\|_{L^{\infty}(Q_{3R})}^{1-\theta}\left(\mean{Q_{3R}}|Dw|^p\,dx\right)^{\frac{\theta}{p}} \nonumber\\
&\quad+|Q_{3R}|R^{(\theta-s)q}\|g\|^{(1-\theta)(q-1)}_{L^{\infty}(Q_{3R})}\left(\mean{Q_{3R}}|Dg|^p\,dx\right)^{\frac{\theta(q-1)}{p}}\|w\|_{L^{\infty}(Q_{3R})}^{1-\theta}\left(\mean{Q_{3R}}|Dw|^p\,dx\right)^{\frac{\theta}{p}} \nonumber\\
&\lesssim |Q_{3R}|R^{(\theta-s)q}\left[\mean{\Omega_{3R}}|Du|^p\,dx+\mean{Q_{3R}}(|Dg|^p+1)\,dx\right]
\end{align*}
with the implicit constant $c=c(\data_2)$. 
As for $J_2$, noting 
	\begin{align*}%\label{eq:1st.b.5}
		 J_2&\lesssim \int_{Q_{2R}}|w(x)||u(x)-(u)_{3R}|^{q-1}\int_{\ern\setminus Q_{3R}}\dfrac{1}{|x-y|^{n+sq}}\,dy\,dx \nonumber\\
			&\quad+\int_{Q_{2R}}|w(x)|\int_{\ern\setminus Q_{3R}}\dfrac{|u(y)-(u)_{3R}|^{q-1}}{|x_0-y|^{n+sq}}\,dy\,dx,
	\end{align*}
we see that it resembles with \eqref{eq:1st.l.8}. Therefore, proceeding similarly to the proof of Lemma \ref{lem:1st.l} and additionally using \eqref{eq:b.10},  we conclude the proof. 
\end{proof}

Let $w_b\in W^{1,p}_0(\Omega_{R})$ be the weak solution of
\begin{equation}\label{eq:2nd.eq.b}
	\left\{\begin{array}{rlll}
		-\divo(\bar{A}_{B'_{R}}(x_1,Dw_b))&=0&&\text{in }\Omega_{R},\\
		w_b&=h_b&&\text{on }\partial(\Omega_{R}),
	\end{array}
	\right.
\end{equation}
where 
\begin{align*}
	\bar{A}_{B'_{R}}(x_1,z)=\mean{B'_{R}(x_0)}A(x_1,x',z)\,dx'.
\end{align*}
We then have the following energy estimate:
 there exists a constant $c=c(n,p,\Lambda)\geq 1$ such that
\begin{align}\label{eq:energy.est.w.b}
\mean{\Omega_{R}}|Dw_b|^p\,dx\leq c\mean{\Omega_{R}}|Dh_b|^p\,dx+c\mean{\Omega_{R}}|Dg|^p\,dx.
\end{align}

Now we proceed to the second comparison.
\begin{lemma}\label{lem:2nd}
Let $A$ be $(\delta,R_0)$-vanishing of codimension 1, for some $\delta$ and $R_0>0$. Then for all $R<R_0$, we have
% {\color{red}For every $\nu\in(0,1)$, there exists a small constant $\delta=\delta(\data_2,\nu)$ such that if $A$ is $(\delta,R)$-vanishing of codimension 1, then we have}
% {\color{blue}The red color is older statement, which I think should be removed, do you agree?}
% {\color{red}Yes, I agree. So do you absorb the free parameter $\nu$ to $\delta$?} {\color{blue}Yes, so eventually we don't have to choose $\delta$ depending on $\nu$ in the final covering argument, i.e., in $S(\ep,\nu,\delta,\rho_0)$.}
\begin{align*}
\begin{split}
\mean{\Omega_{R}}&|V(Dw_b)-V(Dh_b)|^2\,dx\leq c \; \delta^{\frac{\sigma}{p'(1+\sigma)}} \left[\left(\mean{\Omega_{2R}}|Du|^{p(1+\sigma)}\,dx\right)^{\frac{1}{1+\sigma}}+\left(\mean{\Omega_{R}}|Dg|^{p(1+\sigma)}\,dx\right)^{\frac{1}{1+\sigma}}\right],
\end{split}
\end{align*}
where $\sigma\in (0,1)$ is as in \eqref{eq:hi.w.b} and $c=c(\data_2)$.
\end{lemma}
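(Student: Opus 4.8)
The plan is to test the weak formulations of \eqref{eq:1st.eq.b} and \eqref{eq:2nd.eq.b} against the common comparison function $w_b - h_b \in W^{1,p}_0(\Omega_R)$ and exploit that the two operators differ only by the oscillation of $A$ in the $x'$-variables, which is controlled by $\kappa(A,Q_R(x_0))$. First I would write
\begin{align*}
\mean{\Omega_R}|V(Dw_b)-V(Dh_b)|^2\,dx &\eqsim \mean{\Omega_R}\big(\bar{A}_{B'_R}(x_1,Dw_b)-\bar{A}_{B'_R}(x_1,Dh_b)\big)\cdot(Dw_b-Dh_b)\,dx\\
&= \mean{\Omega_R}\big(A(x,Dh_b)-\bar{A}_{B'_R}(x_1,Dh_b)\big)\cdot(Dw_b-Dh_b)\,dx,
\end{align*}
using \eqref{eq:AV} on the left and the weak formulations (both right-hand sides vanish) together with $\divo(A(x,Dh_b))=0$ on the right. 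By the definition \eqref{eq:kappa} of $\kappa$ and the bound \eqref{eq:kappa.2L}, the integrand is $\le c\,\kappa(A,Q_R(x_0))(x)\,|Dh_b|^{p-1}|Dw_b-Dh_b|$ pointwise; after Young's inequality (splitting off $|Dw_b-Dh_b|^p$) and \eqref{eq:V,theta} to absorb that term into the left-hand side up to an $\ep|Dh_b|^p$ error — note $\eqsim$ in \eqref{eq:AV} lets us reabsorb — one is left with estimating
$$\mean{\Omega_R}\kappa(A,Q_R(x_0))(x)\,|Dh_b|^p\,dx.$$

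Next I would apply H\"older's inequality with exponents $\frac{1+\sigma}{\sigma}$ and $1+\sigma$ (here $\sigma$ is the self-improvement exponent from \eqref{eq:hi.w.b}) to get
$$\mean{\Omega_R}\kappa(A,Q_R(x_0))\,|Dh_b|^p\,dx \le \left(\mean{\Omega_R}\kappa(A,Q_R(x_0))^{\frac{1+\sigma}{\sigma}}\,dx\right)^{\frac{\sigma}{1+\sigma}}\left(\mean{\Omega_R}|Dh_b|^{p(1+\sigma)}\,dx\right)^{\frac{1}{1+\sigma}}.$$
For the first factor, I would use $\kappa \le 2\Lambda$ from \eqref{eq:kappa.2L} to bound $\kappa^{\frac{1+\sigma}{\sigma}} \le (2\Lambda)^{\frac{1}{\sigma}}\kappa$ and then invoke the $(\delta,R_0)$-vanishing hypothesis: since $R<R_0$ and $Q_R(x_0)$ is centered on a point with $x_0\in\partial\Omega$ (boundary case of Definition \ref{def:BMO.b}), $\mean{Q_R(x_0)}\kappa\,dx\le\delta$, and by the measure density \eqref{eq:m.den1} the average over $\Omega_R$ is comparable, giving the first factor $\le c\,\delta^{\sigma/((1+\sigma)\,p')}$ after adjusting exponents (writing $\frac{\sigma}{1+\sigma}$ as $\frac{1}{p'}\cdot\frac{p'\sigma}{1+\sigma}$ and absorbing the $\Lambda$-power into $c$). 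For the second factor, I would invoke the self-improving estimate \eqref{eq:hi.w.b} for $Dh_b$ to convert $\big(\mean{\Omega_R}|Dh_b|^{p(1+\sigma)}\big)^{1/(1+\sigma)}$ into the stated right-hand side involving $\big(\mean{\Omega_{2R}}|Du|^{p(1+\sigma)}\big)^{1/(1+\sigma)}$ and $\big(\mean{\Omega_R}|Dg|^{p(1+\sigma)}\big)^{1/(1+\sigma)}$, also using the energy estimate \eqref{eq:energy.est.h.b} to dominate the lower-order $\big(\mean{\Omega_{2R}}|Dh_b|^p\big)^{1/p}$ term by $\big(\mean{\Omega_{2R}}|Du|^{p(1+\sigma)}\big)^{1/(1+\sigma)} + \big(\mean{\Omega_{2R}}|Dg|^{p(1+\sigma)}\big)^{1/(1+\sigma)}$ via Hölder. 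Collecting everything and choosing $\ep$ small (depending only on $\data_2$) to close the absorption yields the claim.

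The main technical obstacle I anticipate is bookkeeping the exponents in the $\delta$-power: matching $\big(\fint \kappa^{(1+\sigma)/\sigma}\big)^{\sigma/(1+\sigma)}$ against the BMO-type bound $\fint\kappa\le\delta$ to land exactly on the exponent $\frac{\sigma}{p'(1+\sigma)}$ requires care — one uses $\kappa^{(1+\sigma)/\sigma}\le(2\Lambda)^{1/\sigma}\kappa$, so the factor is $\le\big((2\Lambda)^{1/\sigma}\delta\big)^{\sigma/(1+\sigma)}\le c\,\delta^{\sigma/(1+\sigma)}$, and one then uses $\delta<1$ to trade the full power $\delta^{\sigma/(1+\sigma)}$ for the smaller (hence weaker, but sufficient) power $\delta^{\sigma/(p'(1+\sigma))}$; this is harmless since $p'>1$. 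A secondary point requiring attention is the reabsorption step: the $|Dw_b-Dh_b|^p$ term generated by Young's inequality must be controlled using \eqref{eq:V,theta} with $\vartheta$ chosen small relative to the implicit constant in \eqref{eq:AV}, and the residual $\ep|Dh_b|^p$ term is then folded into the H\"older estimate above (it is dominated by the same right-hand side via \eqref{eq:energy.est.h.b}), so no genuinely new quantity appears. Everything else is a routine application of the energy and self-improving estimates already recorded.
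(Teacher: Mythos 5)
Your opening moves line up precisely with the paper's: test $h_b-w_b\in W^{1,p}_0(\Omega_R)$ against both \eqref{eq:1st.eq.b} and \eqref{eq:2nd.eq.b}, use \eqref{eq:AV} for the averaged vector field to go from the $V$-quantity to the bilinear form of $\bar A_{B'_R}$, rewrite it as the pairing with the oscillation $A(x,Dh_b)-\bar A_{B'_R}(x_1,Dh_b)$ and then invoke the definition of $\kappa$. The divergence occurs in how the remaining factor $|Dw_b-Dh_b|$ is dealt with, and it is there that your proposal has a genuine gap.

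You propose Young's inequality plus \eqref{eq:V,theta} to reabsorb $\ep\mean{\Omega_R}|Dw_b-Dh_b|^p\,dx$ into the left side, accepting a residual $\ep\vartheta\mean{\Omega_R}|Dh_b|^p\,dx$. The crucial point is that this residual carries \emph{no} factor of $\delta$: the only place $\delta$ can enter is through the vanishing condition on $\kappa$, i.e.\ through the Young term $c(\ep)\mean{}\kappa^{p'}|Dh_b|^p\,dx$, not through $\mean{}|Dw_b-Dh_b|^p\,dx$. If $\ep$ and $\vartheta$ depend only on $\data_2$, as you assert in your final sentence, the residual $\ep\vartheta\mean{}|Dh_b|^p\,dx$ is a fixed positive multiple of $\mean{}|Dh_b|^p\,dx$, which via \eqref{eq:energy.est.h.b} and H\"older is comparable to the bracketed quantity on the right-hand side of the lemma but \emph{without} the small prefactor $\delta^{\frac{\sigma}{p'(1+\sigma)}}$. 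So "folded into the same right-hand side" cannot close the argument: the statement requires the whole left side to be $O(\delta^{\frac{\sigma}{p'(1+\sigma)}})$, and your residual is $O(1)$. One can salvage this route by choosing $\ep\sim\delta^{\frac{\sigma}{p'(1+\sigma)}}$ — then $c(\ep)\sim\ep^{-1/(p-1)}$ blows up, and a careful exponent count shows that the compensating factor $\delta^{\frac{\sigma}{1+\sigma}}$ from the vanishing condition exactly cancels this blow-up, landing precisely (and only just) on the stated power of $\delta$ — but your proposal neither does this calculation nor even allows $\ep$ to depend on $\delta$.

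The paper avoids the issue entirely: it applies H\"older with exponents $p'$ and $p$ to split off $\left(\mean{}|Dw_b-Dh_b|^p\,dx\right)^{1/p}$ as a multiplicative factor (estimated directly by \eqref{eq:energy.est.h.b} and \eqref{eq:energy.est.w.b}), and then a second H\"older with $\frac{1+\sigma}{\sigma}$ and $1+\sigma$ inside the $p'$-factor to pull out $\delta^{\sigma/(1+\sigma)}$. The two factors multiply, so the small $\delta$-power sits in front of the entire product and the powers $\frac{1}{p'}+\frac{1}{p}=1$ combine to produce the exponent $\frac{1}{1+\sigma}$ on the integral. No absorption, hence no uncontrolled residual. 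Your Young-based route introduces an additive term which, as written, breaks the estimate.
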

\begin{proof}
Test $h_b-w_b\in W^{1,p}_0(\Omega_{R})$ to both \eqref{eq:1st.eq.b} and \eqref{eq:2nd.eq.b} to have
\begin{align*}%\label{eq:2nd.b.1}
\int_{\Omega_{R}}\left(A(x,Dh_b)-\bar{A}_{B'_R}(x_1,Dw_b)\right)\cdot(Dh_b-Dw_{b})\,dx=0.
\end{align*}
Then we obtain
\begin{align}\label{eq:2nd.b.6}
\mean{\Omega_{R}}&|V(Dw_b)-V(Dh_b)|^2\,dx \nonumber\\ &\lesssim\mean{\Omega_{R}}\left(\bar{A}_{B'_R}(x_1,Dh_b)-\bar{A}_{B'_R}(x_1,Dw_{b})\right)\cdot(Dh_b-Dw_{b})\,dx \nonumber\\
&=\mean{\Omega_{R}}\left(\bar{A}_{B'_R}(x_1,Dh_b)-A(x,Dh_{b})\right)\cdot(Dh_b-Dw_{b})\,dx \nonumber\\
&\lesssim \left(\mean{\Omega_{R}}\left|\bar{A}_{B'_R}(x_1,Dh_b)-A(x,Dh_{b})\right|^{p'}\,dx\right)^\frac{1}{p'}\left(\mean{\Omega_{R}}\left|Dh_b-Dw_{b}\right|^p\,dx\right)^\frac{1}{p},
\end{align}
where in the last line we have used H\"older's inequality.
For the first term of the right-hand side of \eqref{eq:2nd.b.6}, we compute
\begin{align}\label{eq:2nd.b.7}
\begin{split}
&\mean{\Omega_{R}}\left|\bar{A}_{B'_R}(x_1,Dh_b)-A(x,Dh_{b})\right|^{p'}\,dx\\
&\lesssim \mean{\Omega_{R}}\kappa(A,Q_{R})^{p'}\left|Dh_b\right|^{p}\,dx
\lesssim\underbrace{\left(\mean{\Omega_{R}}\kappa(A,Q_{R})^{\frac{p'(1+\sigma)}{\sigma}}\,dx\right)^{\frac{\sigma}{1+\sigma}}}_{=:I_{1}}\underbrace{\left(\mean{\Omega_{R}}\left|Dh_b\right|^{p(1+\sigma)}\,dx\right)^{\frac{1}{1+\sigma}}}_{=:I_{2}}.
\end{split}
\end{align}

For $I_{1}$, we use \eqref{eq:kappa.2L} and Definition \ref{def:BMO.b} to obtain
\begin{align}\label{eq:2nd.b.8}
I_{1}\lesssim \left(\mean{\Omega_{R}}\kappa(A,Q_{R})^{\frac{p'(1+\sigma)}{\sigma}}\,dx\right)^{\frac{\sigma}{1+\sigma}}\lesssim \Lambda^{\left(\frac{p'(1+\sigma)}{\sigma}-1\right)\frac{\sigma}{1+\sigma}}\delta^{\frac{\sigma}{1+\sigma}}\lesssim\delta^{\frac{\sigma}{1+\sigma}}.
\end{align}
Also, for $I_{2}$, \eqref{eq:energy.est.h.b}, \eqref{eq:energy.est.w.b} and \eqref{eq:hi.w.b} yield that
\begin{align}\label{eq:2nd.b.9}
I_{2}&\lesssim\mean{\Omega_{R}}\left|Dh_b\right|^{p}\,dx+\left(\mean{\Omega_{2R}}|Du|^{p(1+\sigma)}\,dx\right)^{\frac{1}{1+\sigma}}+\left(\mean{\Omega_{2R}}|Dg|^{p(1+\sigma)}\,dx\right)^{\frac{1}{1+\sigma}} \nonumber\\
&\lesssim\left(\mean{\Omega_{2R}}|Du|^{p(1+\sigma)}\,dx\right)^{\frac{1}{1+\sigma}}+\left(\mean{\Omega_{2R}}|Dg|^{p(1+\sigma)}\,dx\right)^{\frac{1}{1+\sigma}}.
\end{align}
For the second term of \eqref{eq:2nd.b.6}, using \eqref{eq:energy.est.w.b} and \eqref{eq:energy.est.h.b}, we have
\begin{align}\label{eq:2nd.b.10}
\mean{\Omega_{R}}\left|Dh_b-Dw_{b}\right|^p\,dx
&\lesssim\mean{\Omega_{R}}\left|Du\right|^p\,dx+\mean{\Omega_{R}}\left|Dg\right|^p\,dx \nonumber\\
&\lesssim\left(\mean{\Omega_{2R}}|Du|^{p(1+\sigma)}\,dx\right)^{\frac{1}{1+\sigma}}+\left(\mean{\Omega_{2R}}|Dg|^{p(1+\sigma)}\,dx\right)^{\frac{1}{1+\sigma}}.
\end{align}
Finally using \eqref{eq:2nd.b.10} and \eqref{eq:2nd.b.7} together with \eqref{eq:2nd.b.8} and \eqref{eq:2nd.b.9} in \eqref{eq:2nd.b.6}, we complete the proof of the lemma.
\end{proof}
Let $v_b\in W^{1,p}(Q^+_{\frac{R}{2}}(0))$ be a weak solution of
\begin{equation}\label{eq:3rd.eq.b}
	\left\{\begin{array}{rlll}
		-\divo(\bar{A}_{B'_{R}}(x_1,Dv_b))&=0\quad\text{in }Q^+_{\frac{R}{2}}(0)\\
		v_b&=0\quad\text{on }T_{\frac{R}{2}}(0).
	\end{array}
	\right.
\end{equation}

% {\color{Magenta} Can I ask why you give the scaled version of third comparison as an independent lemma as Lemma 5.3? I think it looks better to give the statement of third comparison without scaling, and in the proof, I suggest the argument of the proof as follows:
% Step 1. we use scaling \&
% Step 2. use contradiction to get the comparison estimate \&
% Step 3. scaling back.
% (In some sense, in reading the proof, I don't want to employ the notation like $\bar{\tilde{v}}$.)
% }
% {\color{blue} I agree with your previous suggestion that we need to use scaling version of only $w_b$ and $v_b$. I am not very much comfortable with choosing the constants $\delta$ and $R$, that's why I tried to use scaling before hand. I suggest you modify these two lemmas as it suits you the best. Also could you add a few citations in the introduction regarding purely local problems? In the meantime I will modify the rest.}
% {\color{Magenta} I see. I modify the two lemmas following your opinion. Also I add some citations related to purely local problems.}
Then we have the following higher integrability estimate of $Dv_b$.
\begin{lemma}\label{lem:ext.5}
For any $\tilde{\gamma}_b\in(p,\infty)$, there exists a constant $c=c(n,p,\Lambda,\tilde{\gamma}_b)\geq 1$ such that
\begin{align}\label{eq:tilde v}
\mean{\Omega_{\frac{3R}{8}}(x_0)}|D\bar{v}|^{\tilde{\gamma}_b}\,dx\leq c\left(\mean{\Omega_{\frac{R}{2}}(x_0)}|D\bar{v}|^p\,dx\right)^{\frac{\tilde{\gamma}_b}{p}},
\end{align}
where $\bar{v}$ is the zero extension of $v_b$ from $Q^+_{\frac{R}{2}}(0)$ to $Q_{\frac{R}{2}}(0)$.
\end{lemma}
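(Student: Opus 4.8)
The plan is to exploit the one structural advantage of \eqref{eq:3rd.eq.b}: the averaged nonlinearity $\bar A_{B'_{R}}(x_1,\cdot)$ depends on the single variable $x_1$ only, and equations of this type enjoy a gradient bound that is uniform up to the flat face $T_{\frac R2}(0)$. Since $\bar v$ is merely the zero extension and, by the growth part of \eqref{eq:A}, $\bar A_{B'_{R}}(x_1,0)=0$, the trivial extension does not solve the equation across $T_{\frac R2}(0)$; instead I would pass to the \emph{odd} reflection $\tilde v$, setting $\tilde v(x_1,x'):=-v_b(-x_1,x')$ for $x_1<0$. Because $v_b=0$ on $T_{\frac R2}(0)$ the tangential derivatives of $v_b$ have zero trace there, so $\tilde v\in W^{1,p}(Q_{\frac R2}(0))$, and a direct computation shows that $\tilde v$ solves $-\divo(\tilde A(x_1,D\tilde v))=0$ in $Q_{\frac R2}(0)$, where $\tilde A$ is obtained from $\bar A_{B'_{R}}$ by the sign changes of the $x_1$- and $z_1$-variables dictated by the reflection. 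As these sign changes are isometries of $\ern$, the field $\tilde A$ again satisfies \eqref{eq:A} with the same $\Lambda$, is still independent of $x'$, and moreover $\int_{Q_{\frac R2}(0)}|D\tilde v|^p\,dx=2\int_{Q^+_{\frac R2}(0)}|Dv_b|^p\,dx$.

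Next I would invoke the interior regularity for $p$-growth equations whose nonlinearity is measurable in one variable: for such equations the full gradient is locally bounded (cf. \cite{BW0,KK}), so that
\[
\|D\tilde v\|_{L^\infty(Q_{\frac{3R}{8}}(0))}\le c\left(\mean{Q_{\frac R2}(0)}|D\tilde v|^p\,dx\right)^{\frac1p},\qquad c=c(n,p,\Lambda).
\]
For the present purpose even the weaker fact that $D\tilde v\in L^{\tilde{\gamma}_b}_{\loc}$ with the corresponding scaling-invariant estimate would be enough. Since $D\bar v=D\tilde v$ on $\{x_1>0\}$ while $D\bar v\equiv 0$ on $\{x_1\le 0\}$, combining this with the identity $\int_{Q_{\frac R2}(0)}|D\tilde v|^p\,dx=2\int_{Q^+_{\frac R2}(0)}|Dv_b|^p\,dx$ yields
\[
\|D\bar v\|_{L^\infty(Q_{\frac{3R}{8}}(0))}=\|Dv_b\|_{L^\infty(Q^+_{\frac{3R}{8}}(0))}\le c\left(\mean{Q^+_{\frac R2}(0)}|Dv_b|^p\,dx\right)^{\frac1p}.
\]

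Finally I would transfer this to $\Omega$-truncations. Since $\Omega$ is Reifenberg flat, \eqref{eq:m.den1} gives $|\Omega_{\frac{3R}{8}}(x_0)|\simeq|\Omega_{\frac R2}(x_0)|\simeq|Q^+_{\frac R2}(0)|\simeq R^n$, and $\Omega_{\frac R2}(x_0)$ and $Q^+_{\frac R2}(0)$ differ only by layers of measure $\lesssim\delta R^n$ on which $D\bar v$ vanishes (in $\{x_1\le 0\}$) or is controlled by the sup bound just obtained; hence $\mean{Q^+_{\frac R2}(0)}|Dv_b|^p\,dx\lesssim\mean{\Omega_{\frac R2}(x_0)}|D\bar v|^p\,dx$. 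Combining these, and using $\Omega_{\frac{3R}{8}}(x_0)\subset Q_{\frac{3R}{8}}(0)$,
\[
\mean{\Omega_{\frac{3R}{8}}(x_0)}|D\bar v|^{\tilde{\gamma}_b}\,dx\le\|D\bar v\|_{L^\infty(Q_{\frac{3R}{8}}(0))}^{\tilde{\gamma}_b}\le c\left(\mean{\Omega_{\frac R2}(x_0)}|D\bar v|^p\,dx\right)^{\frac{\tilde{\gamma}_b}{p}},
\]
which is \eqref{eq:tilde v}. The step I expect to be the main obstacle is the second one: carefully checking that the odd-reflected operator $\tilde A$ still obeys the structure conditions \eqref{eq:A}, and, above all, having at hand and correctly quoting the sharp interior gradient regularity for equations with nonlinearity measurable in a single variable — this is precisely where the $x'$-averaging built into $\bar A_{B'_{R}}$ is used; the remaining passage from cylinders to $\Omega$-truncations is routine Reifenberg-flatness bookkeeping.
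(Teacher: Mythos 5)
Your proposal follows the same strategy as the paper: odd reflection of $v_b$ across $T_{R/2}(0)$ together with the corresponding sign-flipped nonlinearity, verification that the reflected nonlinearity $\hat A$ still satisfies \eqref{eq:A} (the paper points to \cite[Lemma 2.2]{KR} and \cite[Theorem 3.4]{M1} for this), and then an invocation of interior gradient regularity for equations whose nonlinearity is measurable in one direction only.

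The place to be careful is the gradient regularity you quote. Your headline claim, that $D\tilde v$ is \emph{locally bounded} because the nonlinearity is measurable in $x_1$ alone, is not supported by the references you cite: \cite{BW0} is a Calder\'on--Zygmund result for BMO coefficients giving $W^{1,\gamma}$ not $W^{1,\infty}$, and \cite{KK} concerns global self-improvement of gradient integrability. In fact no $L^\infty$ gradient estimate is available in this setting; what is available, and what the paper actually uses (via \cite[Theorem 2.1]{KimY1} or \cite[Theorem 3.1]{BL1} together with a covering argument), is the interior $W^{1,\tilde\gamma_b}$ estimate for every finite $\tilde\gamma_b>p$. You do acknowledge parenthetically that this weaker $L^{\tilde\gamma_b}$ bound suffices, and that is precisely the paper's route, so the argument closes once the $L^\infty$ claim is replaced by the $L^{\tilde\gamma_b}$ one and the citation is corrected accordingly. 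The final Reifenberg-flatness bookkeeping relating the $\Omega$-truncated averages to the $Q^+$-averages is as you describe and is implicit in the paper's one-line conclusion.
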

\begin{proof}
We abbreviate $\bar{A}_{B'_{R}}(x_1,\xi)=:\bar{A}(x_1,\xi)$. Define
\begin{align}\label{eq:ext.w}
	\hat{v}(x_1,x')=
	\begin{cases}
		v(x_1,x')&\text{for }x\in Q^+_{\frac{R}{2}}(0),\\
		-v(-x_1,x')&\text{for }x\in Q^-_{\frac{R}{2}}(0),
	\end{cases}
\end{align}
and
\begin{align*}%\label{eq:ext.A}
	\hat{A}_i(x_1,\xi_1,\xi')=
	\begin{cases}
		\bar{A}_i(x_1,\xi_1,\xi')&\text{for}\quad x_1\in(0,r)\quad\text{and}\quad 1\leq i\leq n,\\
		\bar{A}_1(-x_1,\xi_1,-\xi')&\text{for}\quad x_1\in(-r,0)\quad\text{and}\quad i=1,\\
		-\bar{A}_i(-x_1,\xi_1,-\xi')&\text{for}\quad x_1\in(-r,0)\quad\text{and}\quad i\geq 2.
	\end{cases}
\end{align*}
By the argument of the proofs of \cite[Lemma 2.2]{KR} and \cite[Theorem 3.4]{M1}, one can see that the vector field $\hat{A}(x_1,\xi)$ satisfies growth condition \eqref{eq:A} and $\hat{v}$ is a weak solution of 
\begin{align*}%\label{eq:Q}
-\divo(\hat{A}(x_1,D\hat{v}))=0\quad\text{in}\quad Q_{\frac{R}{2}}(0).
\end{align*}	
Now employing \cite[Theorem 2.1]{KimY1} (or \cite[Theorem 3.1]{BL1}) and a covering argument, recalling \eqref{eq:ext.w} and then considering that $\bar{v}$ is the zero extension of $v_b$ from $Q^+_{\frac{R}{2}}(0)$ to $Q_{\frac{R}{2}}(0)$, we obtain \eqref{eq:tilde v}.
\end{proof}

Now we prove the following comparison estimate.
\begin{lemma}\label{lem:3rd}
Let $\lambda\geq 1$ and $w_b\in W^{1,p}(\Omega_{R})$ be the weak solution of \eqref{eq:2nd.eq.b} such that
  \begin{align}\label{eq:3rd.1}
    \mean{\Omega_{R}}|Dw_b|^p\,dx\leq c_1 \lambda^p
\end{align}
for some  constant $c_1=c_1(n,p,\Lambda)$. 
Then for any $\ep\in(0,1)$, there exists a small number $\delta=\delta(n,p,\Lambda,\ep)$ such that if
	\begin{align}\label{eq:3rd.2}
		Q^+_{\frac{R}{2}}(0)\subset\Omega_{\frac{R}{2}}\subset Q_{\frac{R}{2}}(0)\cap\{x_1>-\delta R\}
	\end{align}
holds for such $\delta\in(0,1)$, then there exists a weak solution $v_b$ of 
\eqref{eq:3rd.eq.b} satisfying
  \begin{align}\label{eq:3rd.3}
			\mean{Q^+_{\frac{R}{2}}(0)}|Dv_b|^p\,dx\leq c_1 \lambda^p
		\end{align}
  and
\begin{align}\label{eq:3rd.4}
 \mean{Q^+_{\frac{R}{2}}(0)}|w_b-v_b|^p\,dx\leq\ep \lambda^p R^p.
	\end{align}
\end{lemma}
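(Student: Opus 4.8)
The plan is to argue by a standard compactness/contradiction scheme together with the basic energy estimate for $w_b$ and the interior-boundary regularity of the limiting equation. First I would record that, by testing \eqref{eq:2nd.eq.b} with $w_b$ (which is admissible since $w_b-h_b\in W^{1,p}_0(\Omega_R)$) and using the structure condition \eqref{eq:A} together with \eqref{eq:energy.est.w.b} and \eqref{eq:energy.est.h.b}, one has the quantitative bound $\mean{\Omega_R}|Dw_b|^p\,dx\le c_1\lambda^p$ built into the hypothesis \eqref{eq:3rd.1}; this is the only a priori control we use. Set $\bar A(x_1,\xi):=\bar A_{B'_R}(x_1,\xi)$; note $\bar A$ still satisfies \eqref{eq:A} with the same $\Lambda$ and, crucially, is independent of $x'$. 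The natural candidate for $v_b$ is the weak solution of \eqref{eq:3rd.eq.b} with the \emph{same boundary data on the curved part}; more precisely, I would solve
\[
\left\{\begin{array}{rlll}
-\divo(\bar A(x_1,Dv_b))&=0 &&\text{in }Q^+_{R/2}(0),\\
v_b&=w_b &&\text{on }\partial Q^+_{R/2}(0)\setminus T_{R/2}(0),\\
v_b&=0 &&\text{on }T_{R/2}(0),
\end{array}\right.
\]
which is consistent with \eqref{eq:3rd.eq.b} (the stated boundary condition there only prescribes the flat face $T_{R/2}$). The energy estimate for $v_b$ against $w_b$ (extended suitably, using $w_b=0$ on $\partial_w\Omega_R$) gives \eqref{eq:3rd.3} with the same constant $c_1$, up to harmless adjustments, because on the part of $\partial Q^+_{R/2}(0)$ lying inside $\Omega$ the two agree and on the rest $w_b$ vanishes by the Reifenberg inclusion \eqref{eq:3rd.2}.

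The heart of the matter is \eqref{eq:3rd.4}. I would normalize by scaling $R\to 1$ and dividing by $\lambda$, so that it suffices to show: for every $\ep>0$ there is $\delta>0$ such that if $\mean{\Omega_1}|Dw|^p\le c_1$, the vector field satisfies \eqref{eq:A}, and $Q^+_{1/2}(0)\subset\Omega_{1/2}\subset Q_{1/2}(0)\cap\{x_1>-\delta\}$, then $\mean{Q^+_{1/2}(0)}|w-v|^p\le\ep$, where $v$ solves the half-cylinder problem with matching lateral data. Suppose not: there are $\ep_0>0$, sequences $\delta_k\downarrow0$, domains $\Omega_{1/2}^{(k)}$ pinched between $Q^+_{1/2}(0)$ and $Q_{1/2}(0)\cap\{x_1>-\delta_k\}$, vector fields $\bar A_k$ satisfying \eqref{eq:A}, solutions $w_k$ with uniformly bounded energy, and $\mean{Q^+_{1/2}(0)}|w_k-v_k|^p>\ep_0$. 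By the uniform $W^{1,p}$ bound and the self-improving higher integrability \eqref{eq:hi.w.b} (applied to $w_k$, which solves a homogeneous equation of the type \eqref{eq:1st.eq.b} with the $x'$-independent coefficient $\bar A_k$), the sequences $\{w_k\}$ are bounded in $W^{1,p(1+\sigma)}_{\loc}$; extracting a subsequence, $w_k\rightharpoonup w_\infty$ weakly in $W^{1,p}$ and $\bar A_k\to\bar A_\infty$ (up to a subsequence, in a suitable pointwise sense, using that the $\bar A_k$ depend only on $x_1$ and $\xi$ and are equicontinuous in $\xi$ on $\{|\xi|=1\}$ by $C^1$-regularity and \eqref{eq:A}); the pinching $\delta_k\to0$ forces the limit domain to be exactly $Q^+_{1/2}(0)$, and a Meyers–Serrin / monotone-operator argument (via \eqref{eq:AV}) identifies $w_\infty$ as a weak solution of $-\divo(\bar A_\infty(x_1,Dw_\infty))=0$ in $Q^+_{1/2}(0)$ with $w_\infty=0$ on $T_{1/2}(0)$. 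The same compactness applied to $v_k$ — which solve the \emph{same} limiting problem with the \emph{same} lateral traces (these converge because $w_k$ converge in $L^p$ on the relevant neighborhood, using \eqref{eq:hi.w.b} to upgrade convergence near the lateral boundary) — yields $v_k\rightharpoonup v_\infty$ solving the identical boundary value problem; uniqueness for monotone operators gives $w_\infty=v_\infty$, contradicting $\|w_k-v_k\|_{L^p}>\ep_0^{1/p}$ once we upgrade weak to strong $L^p$ convergence via Rellich.

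The main obstacle, as I see it, is making the passage to the limit fully rigorous on the \emph{varying} domains $\Omega^{(k)}_{1/2}$: one must show that the lateral boundary data of $w_k$ (hence of $v_k$) behave well as $\delta_k\to0$, i.e.\ that $w_k\to 0$ in an averaged sense on the thin strip $\Omega_{1/2}^{(k)}\setminus Q^+_{1/2}(0)$ and that the zero-extension of $w_k$ is controlled in $W^{1,p}$ — this is exactly where one invokes $w_b=0$ on $\partial_w\Omega_R$, the measure-density consequence \eqref{eq:m.den1} of Reifenberg flatness, and the boundary Sobolev–Poincaré inequality of Lemma \ref{lem:embed3}; and where the higher integrability \eqref{eq:hi.w.b} is indispensable to guarantee that no energy escapes to the moving boundary. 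A secondary technical point is the reflection/extension step: although Lemma \ref{lem:ext.5} reflects $v_b$ across $T_{R/2}$, for the comparison \eqref{eq:3rd.4} what is really needed is $C^{1,\alpha}$ (or at least $W^{1,\infty}$) regularity of $v_\infty$ up to $T_{1/2}$, which follows from the cited results \cite{KR,M1,KimY1,BL1} precisely because $\bar A_\infty$ is independent of the tangential variables; I would invoke these to control the approximation but they are not needed for \eqref{eq:3rd.4} itself, only the compactness argument is. Once \eqref{eq:3rd.4} is established at scale $1$, rescaling restores the $R^p$ and $\lambda^p$ factors and completes the proof.
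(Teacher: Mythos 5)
Your overall strategy is a compactness argument at unit scale, which is the same framework as the paper, but you commit early to an unnecessary and problematic choice: you fix $v_b$ to be the solution on $Q^+_{R/2}(0)$ with $w_b$'s traces on the lateral part of $\partial Q^+_{R/2}(0)$, and then try to prove $\|w_k-v_k\|_{L^p}\to 0$ by showing \emph{both} sequences converge to a common limit. This forces you to control the $L^p$-traces of $w_k$ on the lateral boundary as $\delta_k\downarrow 0$, and as you yourself note, this is exactly where the proof becomes delicate: strong $L^p(\,\Omega^{(k)}_{1/2})$ convergence of $w_k$ does not by itself imply convergence of the lateral traces, and invoking \eqref{eq:hi.w.b} here is only a heuristic. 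Since the $v_k$ have varying boundary data you also need their own compactness and a Mosco-type stability of the Dirichlet problem under perturbation of the trace, none of which you establish. That is a real gap, not merely a technical one.

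The paper avoids this difficulty completely by exploiting the quantifier structure of the statement: since the lemma only asserts \emph{existence of some} $v_b$, the paper negates \eqref{eq:3rd.4} for \emph{every} admissible $\tilde v$ with energy $\le c_1$, assumes $\delta_k=1/k\to0$, extracts a weak $W^{1,p}$-limit $w_0$ of the $w_k$, shows (Browder–Minty) that $w_0$ is itself a weak solution of \eqref{eq:3rd.eq.b} with $w_0=0$ on $T_{1/2}(0)$ and $\mean{Q^+_{1/2}}|Dw_0|^p\le c_1$ by weak lower semicontinuity, and gets a contradiction from Rellich since $w_k\to w_0$ strongly in $L^p(Q^+_{1/2}(0))$. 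The candidate $v_b$ is thus \emph{the limit} $w_0$ and no compactness of a second sequence, no higher integrability \eqref{eq:hi.w.b}, no boundary Sobolev–Poincar\'e, no reflection or $C^{1,\alpha}$ theory is needed for this lemma; all of that belongs to Lemma \ref{lem:ext.5} and Lemma \ref{lem:4th}. You should replace your construction by taking $v_b$ to be the weak limit, which both closes the gap and drops most of the machinery you introduced.
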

\begin{proof}
Before proving the lemma, we first apply scaling and normalization for problems \eqref{eq:2nd.eq.b} and \eqref{eq:3rd.eq.b} to use the compactness argument.

\textbf{Step 1}: \textit{Scaling and normalization}.
 For $r>0$, we define the scaled domain as
\begin{align*}
\tilde{\Omega}_r=\left\{\frac{1}{r}x:x\in\Omega\right\},
\end{align*}
a Carath\'{e}odory map $\tilde{A}:\ern\times\er\times\ern\rightarrow\ern$ and the normalized function $\tilde{h}_b\in W^{1,p}(\Omega_{2R/r})$ as
\begin{align*}
\tilde{A}(x,\xi)=\dfrac{A(rx,\lambda\xi)}{\lambda^{p-1}} \quad\mbox{and}\quad \tilde{h}_b(x)=\frac{h_b(rx)}{\lambda\;r}.
\end{align*}
Analogous to $\tilde{h}_b$, we also define  $\tilde{w}_b\in \tilde{h}_b+W^{1,p}_0(\Omega_{R/r})$ and $\tilde{v}_b\in W^{1,p}(Q^+_{R/(2r)}(0))$.
Then one can easily obtain the following:
\begin{itemize}
    \item $\tilde{A}$ satisfies the uniform ellipticity and growth conditions as in \eqref{eq:A}.
    \item  $\widetilde{w}_b$ is a solution to the following problem:
\begin{equation*}%\label{eq:bdary.scal}
	\left\{ \begin{array}{rlll}
		-\divo(\tilde{A}_{R/r}(x_1,D\widetilde{w}_b))&= 0\quad\text{in}\quad\tilde{\Omega}_{R/r},\\
		 \ \ \tilde{w}_b&=0\quad\text{on}\quad \partial_w{\tilde\Omega}_{R/r},
	\end{array}
	\right. 
\end{equation*}
 where $$\tilde{A}_{R/r}(x_1,\xi):=\mean{B'_{R/r}}\tilde{A}(x_1,x',z)\,dx'=\frac{\bar{A}_{B'_{R}}(y_1,\xi/\lambda)}{\lambda^{p-1}}.$$
 \item We have
 \begin{align*}
 Q^+_{\frac{R}{2r}}(0)\subset\tilde{\Omega}_{\frac{R}{2r}}\subset Q_{\frac{R}{2r}}(0)\cap\left\{x_1>-\delta \frac{R}{r}\right\}.   
 \end{align*}
 \item There holds
  \begin{align*}
  \mean{\Omega_{R/r}}|D\tilde{w}_b|^p \,dx\leq c_1.
  \end{align*}
\end{itemize}

\textbf{Step 2}: In this step we prove the lemma for $R=1$ and $\lambda=1$. 
On the contrary suppose that there exist $\ep_0>0$ and sequences $\{w_k\}^{\infty}_{k=1}$ and $\{\tilde{\Omega}_{\frac{1}{2},k}\}^{\infty}_{k=1}$ with $\tilde{\Omega}_{\frac{1}{2},k}:=\tilde{\Omega}_k\cap Q_{\frac{1}{2}}(0)$ such that $w_k\in W^{1,p}(\tilde{\Omega}_{\frac{1}{2},k})$ is a weak solution of the following
\begin{equation}\label{eq:3rd.5}
\left\{\begin{array}{rlll}
-\divo\bar{A}_{B'_{1}}(x_1,Dw_k)&=0\quad\text{in }\tilde{\Omega}_{\frac{1}{2},k},\\
w_k&=0\quad\text{on }\partial_w\tilde{\Omega}_{\frac{1}{2},k}
\end{array}
\right.
\end{equation}
with
\begin{align}\label{eq:3rd.6}
\mean{\tilde{\Omega}_{\frac{1}{2},k}}|Dw_k|^p\,dx\leq c_1 
\end{align}
and
\begin{align}\label{eq:3rd.7}
Q^+_{\frac{1}{2}}(0)\subset\tilde{\Omega}_{\frac{1}{2},k}\subset Q_{\frac{1}{2}}(0)\cap\left\{x_1>-1/k\right\},
\end{align}
but
\begin{align}\label{eq:3rd.8}
\mean{Q^+_{\frac{1}{2}}(0)}|w_k-\tilde{v}|^p\,dx>\ep_0
\end{align}
holds for any weak solution $\tilde{v}\in W^{1,p}(Q^+_{\frac{1}{2}}(0))$ of
\begin{equation*}%\label{eq:3rd.9}
\left\{\begin{array}{rlll}
-\divo \bar{A}_{B'_1}(x_1,D\tilde{v})&=0\quad\text{in } Q^+_{\frac{1}{2}}(0),\\
\tilde{v}&=0\quad\text{on } T_{\frac{1}{2}}(0)
\end{array}
\right.
\end{equation*}
with
\begin{align}\label{eq:3rd.10}
\mean{Q^+_{\frac{1}{2}}(0)}|D\tilde{v}|^p\,dx\leq c_1.
\end{align}
	With the aid of \eqref{eq:3rd.6} and \eqref{eq:3rd.7}, the sequence $\{w_k\}^{\infty}_{k=1}$ is uniformly bounded in $W^{1,p}(Q^+_{\frac{1}{2}}(0))$. Hence, up to a subsequence, there is $w_0\in W^{1,p}(Q^+_{\frac{1}{2}}(0))$ such that
	\begin{align}\label{eq:3rd.11}
		\begin{cases}
			w_{k}\rightharpoonup w_0\quad\text{weakly in }W^{1,p}(Q^+_{\frac{1}{2}}(0)),\\
			w_k\rightarrow w_0\quad\text{in }L^p(Q^+_{\frac{1}{2}}(0)).
		\end{cases}
	\end{align}
	Then together with \eqref{eq:3rd.5} and \eqref{eq:3rd.7}, we notice that $w_0=0$ on $T_{\frac{1}{2}}(0)$ in the trace sense. Moreover, using the method of \textit{Browder and Minty} as in \cite[Lemma 4.5]{BW0}, we see that $w_0$ is a weak solution of
	\begin{equation*}%\label{eq:3rd.12}
		\left\{\begin{array}{rlll}
			-\divo\bar{A}_{B'_1}(x_1,Dw_0)&=0\quad\text{in }Q^+_{\frac{1}{2}}(0),\\
			w_0&=0\quad\text{on }T_{\frac{1}{2}}(0).
		\end{array}
		\right.
	\end{equation*}
	On the other hand, from \eqref{eq:3rd.6} and \eqref{eq:3rd.11} we obtain that
	\begin{align}\label{eq:3rd.13}
		\mean{Q^+_{\frac{1}{2}}(0)}|Dw_0|^p\,dx\leq\liminf_{k\rightarrow\infty}\mean{\tilde{\Omega}_{\frac{1}{2},k}}|Dw_k|^p\,dx\leq c_1.
	\end{align}
	Then one can get a contradiction by taking $k$ sufficiently large and comparing \eqref{eq:3rd.8}--\eqref{eq:3rd.10} to \eqref{eq:3rd.11}--\eqref{eq:3rd.13}. Thus we have shown Lemma \ref{lem:3rd} for $R=1$ and $\lambda=1$.\\
 \textbf{Step 3}. We scale back the result of Step 2 to obtain the required result of the lemma for general $R>0$ and $\lambda\geq 1$. Indeed, setting $w_b(x)=\lambda R\tilde{w}(x/R)$, $v_b(x)=\lambda R\tilde{v}(x/R)$ and $\Omega_{R}=R\tilde{\Omega}_R$, we get \eqref{eq:3rd.3} and \eqref{eq:3rd.4}.  This completes the proof of the lemma.
\end{proof}

Finally, we show third comparison estimate as follows.

\begin{lemma}\label{lem:4th}
Under the same assumptions of Lemma \ref{lem:3rd}, we have
		\begin{align*}%\label{eq:4th.1}
			\mean{\Omega_{\frac{R}{4}}}|Dw_b-D\bar{v}|^p\,dx\leq\ep\lambda^p,
		\end{align*}
		where $\bar{v}$ is the zero extension of $v_b$ from $Q^+_{\frac{R}{2}}(0)$ to $Q_{\frac{R}{2}}(0)$.
\end{lemma}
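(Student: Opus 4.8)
The plan is to combine the three preceding comparison estimates — Lemma~\ref{lem:2nd} (controlling $V(Dw_b)-V(Dh_b)$ by $\delta^{\sigma/(p'(1+\sigma))}$ times the higher-integrability energy), Lemma~\ref{lem:3rd} (the $L^p$-closeness $\fint_{Q^+_{R/2}}|w_b-v_b|^p \le \ep\lambda^p R^p$), together with the energy bounds \eqref{eq:energy.est.w.b}, \eqref{eq:energy.est.h.b} and the interior/boundary higher integrability of $Dv_b$ from Lemma~\ref{lem:ext.5} — to pass from $L^p$-closeness of $w_b$ and $v_b$ to gradient closeness on the smaller cylinder $\Omega_{R/4}$. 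First I would recall that $w_b - \bar v \in W^{1,p}(\Omega_{R/2})$ and both functions vanish on $T_{R/2}(0)\supset \partial_w\Omega_{R/2}$ (up to the Reifenberg correction $\{x_1 > -\delta R\}$), so that the Sobolev–Poincaré inequality of Lemma~\ref{lem:embed3} is applicable to $w_b - \bar v$ on $\Omega_{R/2}$.

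The core of the argument is a \emph{Caccioppoli-type} estimate for the difference $w_b-\bar v$ tested against $\eta^p(w_b-\bar v)$ with a cutoff $\eta$ that is $1$ on $Q_{R/4}$ and supported in $Q_{3R/8}$, say. Subtracting the equations \eqref{eq:2nd.eq.b} for $w_b$ and \eqref{eq:3rd.eq.b} for $v_b$ (both governed by the same frozen operator $\bar A_{B'_R}(x_1,\cdot)$), one gets, using \eqref{eq:AV} and \eqref{eq:V,theta},
\[
\mean{\Omega_{R/4}}|Dw_b-D\bar v|^p\,dx \;\lesssim\; \vartheta\,\mean{\Omega_{3R/8}}|D\bar v|^p\,dx \;+\; c(\vartheta)\,R^{-p}\mean{\Omega_{3R/8}}|w_b-\bar v|^p\,dx.
\]
Here I would use \eqref{eq:AV} to replace $|Dw_b-D\bar v|^p$ by a constant times $|V(Dw_b)-V(D\bar v)|^2$ plus the term absorbable via \eqref{eq:V,theta}; the subtlety on the set $Q_{3R/8}^-\setminus\Omega$ (where $\bar v = 0$ but $w_b$ may be nonzero) is handled because $w_b=h_b$ on $\partial\Omega_R$ and the Reifenberg flatness \eqref{eq:3rd.2} makes $|Q_{3R/8}\setminus\Omega|\lesssim\delta|Q_{3R/8}|$, which is harmless. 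The first term on the right is controlled by the higher-integrability estimate \eqref{eq:tilde v} of Lemma~\ref{lem:ext.5} with any $\tilde\gamma_b>p$, plus Hölder, and then by \eqref{eq:3rd.3}, giving $\lesssim \vartheta\lambda^p$. The second term is exactly where \eqref{eq:3rd.4} of Lemma~\ref{lem:3rd} enters: $R^{-p}\fint|w_b-\bar v|^p \lesssim c(\vartheta)\ep\lambda^p$ (with $\ep$ the parameter from Lemma~\ref{lem:3rd}, chosen after $\vartheta$).

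Thus the scheme is: (i) fix $\vartheta$ small so that $c\vartheta \le \tfrac12\,\ep_{\mathrm{target}}$ absorbs the $|D\bar v|^p$ term against the $\lambda^p$ budget; (ii) invoke Lemma~\ref{lem:3rd} with its internal $\ep'$ chosen so small (hence $\delta$ small, depending only on $n,p,\Lambda,\ep_{\mathrm{target}}$) that $c(\vartheta)\ep' \le \tfrac12\ep_{\mathrm{target}}$; (iii) conclude $\fint_{\Omega_{R/4}}|Dw_b-D\bar v|^p\,dx \le \ep_{\mathrm{target}}\,\lambda^p$. The main obstacle I anticipate is the bookkeeping near the flat boundary: one must be careful that $w_b-\bar v$ is a legitimate test function (zero trace on the appropriate portion of $\partial\Omega_{R/2}$ in the trace sense, using that both solve a Dirichlet problem with the same boundary data up to the correction) and that all the passages between $Q_{R/2}$, $Q_{3R/8}$, $Q_{R/4}$ and their intersections with $\Omega$ use the measure-density property \eqref{eq:m.den1} uniformly — none of this is deep, but the constants must be tracked to see the final dependence is only on $\data_2$ and $\ep$. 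Everything else is a routine combination of Caccioppoli, Sobolev–Poincaré (Lemma~\ref{lem:embed3}), Hölder, and the already-established comparison lemmas.
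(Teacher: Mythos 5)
Your overall strategy---test the difference of the two equations against a cut-off multiple of $w_b-\bar v$, absorb the Caccioppoli error term by the $L^p$-closeness given in Lemma~\ref{lem:3rd}, and use the higher integrability of $Dv_b$ from Lemma~\ref{lem:ext.5}---is in the right spirit and parallels the paper. However, two concrete gaps make the outline incorrect as stated.

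First, $v_b$ only solves \eqref{eq:3rd.eq.b} in the upper half cylinder $Q^+_{R/2}(0)$ with a Dirichlet condition on the flat piece $T_{R/2}(0)$. Once extended by zero, $\bar v$ does \emph{not} solve the homogeneous equation in $\Omega_{3R/8}$; it satisfies
\begin{equation*}
-\divo\big(\bar A_{B'_R}(x_1,D\bar v)\big)
= -D_1\Big(\bar A_{B'_R,n}\big(0,Dv_b(0,x')\big)\chi_{\{x_1<0\}}\Big)
\quad\text{in }\Omega_{3R/8},
\end{equation*}
because the normal flux of $v_b$ across $\{x_1=0\}$ does not vanish (only $v_b$ itself vanishes there). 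This distributional source at the interface is precisely what produces the extra term $I_2$ in the paper's argument, supported on the thin slab $\Omega_{3R/8}\setminus Q^+_{3R/8}(0)$, and it is \emph{this} term that is estimated via H\"older, Poincar\'e, Lemma~\ref{lem:ext.5}, and the smallness of the slab measure (of order $\delta\,|Q_{3R/8}|$ thanks to \eqref{eq:3rd.2}). Simply ``subtracting the equations for $w_b$ and $v_b$'' and treating the discrepancy as a measure-density correction ignores the flux discontinuity that drives the whole estimate.

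Second, the geometry in your sketch is off. You localize the subtlety on $Q^-_{3R/8}\setminus\Omega$ (outside $\Omega$), but $w_b$ is not even defined there; the relevant set is $\Omega_{3R/8}\setminus Q^+_{3R/8}(0)$, namely the part of $\Omega$ \emph{below} the hyperplane $\{x_1=0\}$, which has small measure because of the Reifenberg inclusion $\Omega\cap Q_{3R/8}\subset\{x_1>-\delta\cdot\tfrac{3R}{8}\}$. Similarly, the claim that both $w_b$ and $\bar v$ vanish on $T_{R/2}(0)$ so that Lemma~\ref{lem:embed3} applies to the difference is not accurate: $w_b$ vanishes on $\partial_w\Omega_R$ (the wiggly piece of $\partial\Omega$), while $\bar v$ vanishes on $\{x_1\le 0\}$, and these do not coincide. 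Finally, Lemma~\ref{lem:2nd} plays no role in the proof of this particular comparison (it is used later, in the proof of Theorem~\ref{thm:b}); invoking it here is unnecessary. The correct route is: (i) identify the distributional equation for $\bar v$ as above; (ii) test $(w_b-\bar v)\vp^p$ against both \eqref{eq:2nd.eq.b} and this equation; (iii) treat the Caccioppoli error via Lemma~\ref{lem:3rd} and a Sobolev embedding with the $\delta^{p/n}$ small-measure factor; (iv) estimate the interface term using the $L^\infty$-type higher integrability of Lemma~\ref{lem:ext.5} and the $\delta$-smallness of the strip.
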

\begin{proof}
By Lemma \ref{lem:3rd}, we see that for any $\ep_1>0$, there exist a small $\delta_1=\delta_1(n,\Lambda,\ep_1)$ and a weak solution $v_b\in W^{1,p}(Q^+_{\frac{R}{2}}(0))$ of
    \begin{equation*}%\label{eq:4th.2}
     \left\{\begin{array}{rlll}
		-\divo\bar{A}_{B'_{\frac{R}{2}}}(x_1,Dv_b)&=0\quad\text{in }Q^+_{\frac{R}{2}}(0),\\
		v_b&=0\quad\text{on }T_{\frac{R}{2}}(0)
		\end{array}
		\right.
     \end{equation*}
   with
     \begin{align*}%\label{eq:4th.3}
		\mean{Q^+_{\frac{R}{2}}(0)}|Dv_b|^p\,dx\leq c \lambda^p
		\end{align*}
 such that if \eqref{eq:3rd.1} and \eqref{eq:3rd.2} are true for $\delta_1$, then
		\begin{align}\label{eq:4th.4}
			\mean{Q^+_{\frac{R}{2}}(0)}|w_b-v_b|^p\,dx\leq\ep_1 \lambda^p R^p.
		\end{align}
 One can see that $\bar{v}$ is a weak solution of
    \begin{equation}\label{eq:4th.8}
	\left\{\begin{array}{rlll}
		 -\divo\bar{A}_{B'_{R}}(x_1,D\bar{v})&=-D_1(\bar{A}_{B'_{R},n}(0,Dv(0,x'))\chi_{\{x_1<0\}})\quad\text{in }\Omega_{\frac{3R}{8}},\\
		 \bar{v}&=0\quad\text{on }\partial\Omega_{\frac{3R}{8}}.
		\end{array}
		\right.
		\end{equation}
 Choose a cut-off function $\vp\in C^{\infty}_c(Q_{\frac{3R}{8}})$ such that
    \begin{align}\label{eq:4th.9}
		\vp=1\,\,\text{on}\,\, Q_{\frac{R}{4}},\,\,0\leq\vp\leq 1\,\,\text{and}\,\,|D\vp|\leq \frac{c}{R}\,\,\text{in}\,\,Q_{\frac{3R}{8}}.
      \end{align}
Testing $(\tilde{w}-\bar{v})\vp^p\in W^{1,p}_0(\Omega_{\frac{3R}{8}})$ for \eqref{eq:2nd.eq.b} and \eqref{eq:4th.8} implies that
	\begin{align}\label{eq:4th.10}
		\begin{split}
			I_1&:=\int_{\Omega_{\frac{3R}{8}}}\left(\bar{A}_{B'_{R}}(x_1,Dw_b)-\bar{A}_{B'_{R}}(x_1,D\bar{v})\right)\cdot(D[(w_b-\bar{v})\vp^p])\,dx\\
			&=\int_{\Omega_{\frac{3R}{8}}\setminus Q^+_{\frac{3R}{8}}(0)}\bar{A}_{B'_{R},n}(0,D\bar{v}(0,x'))D_1[(w_b-\bar{v})\vp^p]\,dx=:I_2.
		\end{split}
	\end{align}
	By Young's inequality and \eqref{eq:A}, we get
	\begin{align}\label{eq:4th.11}
		I_1\geq c^*\int_{\Omega_{\frac{3R}{8}}}|Dw_b-D\bar{v}|^p\vp^p\,dx-c\int_{\Omega_{\frac{3R}{8}}}|w_b-\bar{v}|^p|D\vp|^p\,dx
	\end{align}
	%Now we define $p^*=\frac{pn}{n-p}$ if $p<n$ and arbitrary number in $(p,\infty)$ if $n=p$. Then
with $c^*=c^*(n,p,\Lambda)\in(0,1)$ and $c=c(n,p,\Lambda)\geq 1$. Furthermore, using H\"{o}lder's inequality and Sobolev embedding, we discover that
	\begin{align}\label{eq:4th.12}
		\begin{split}
			\int_{\Omega_{\frac{3R}{8}}\setminus Q^+_{\frac{3R}{8}}(0)}|w_b-\bar{v}|^p|D\vp|^p\,dx&=\int_{\Omega_{\frac{3R}{8}}\setminus Q^+_{\frac{3R}{8}}(0)}|w_b|^p|D\vp|^p\,dx\\
			&\leq\frac{c}{R^p}\left(\int_{\Omega_{\frac{3R}{8}}\setminus Q^+_{\frac{3R}{8}}(0)}|w_b|^{p^*}\,dx\right)^{\frac{p}{p^*}} |\Omega_{\frac{3R}{8}}\setminus Q^+_{\frac{3R}{8}}(0)|^{\frac{p}{n}} \\
			&\leq \left(\int_{\Omega_{\frac{3R}{8}}}|Dw_b|^p\,dx\right)|\Omega_{\frac{3R}{8}}\setminus Q^+_{\frac{3R}{8}}(0)|^{\frac{p}{n}}.
		\end{split}
	\end{align}
	% By Sobolev embedding, we discover that
	% \begin{align}\label{eq:4th.13}
		% \left(\int_{\Omega_{\frac{3R}{8}}\setminus Q^+_{\frac{3R}{8}}}|w_b|^{p^*}\,dx\right)^{\frac{p}{p^*}}\leq c\left(\int_{\Omega_{\frac{3R}{8}}}|w_b|^{p^*}\,dx\right)^{\frac{p}{p^*}}\leq \int_{\Omega_{\frac{3R}{8}}}|Dw_b|^p\,dx\leq c\lambda^p.
		% \end{align}
	Merging  \eqref{eq:4th.4}, \eqref{eq:4th.11} and \eqref{eq:4th.12}, it follows that
	\begin{align}\label{eq:4th.14}
		I_1\geq c^*\int_{\Omega_{\frac{3R}{8}}}|Dw_b-D\bar{v}|^p\vp^p\,dx-\big(c\ep_1\lambda^p+c\delta^{\frac{p}{n}}\lambda^p\big) |\Omega_{\frac{3R}{8}}|,
	\end{align}
	where we have also used \eqref{eq:3rd.1}, \eqref{eq:3rd.2} and \eqref{eq:4th.9}.
	For $I_2$, we use \eqref{eq:A}, \eqref{eq:energy.est.w.b} and \eqref{eq:4th.9} together with H\"{o}lder's inequality and Poincar\'{e}'s inequality to find that
	\begin{align}\label{eq:4th.15}
		 I_2&\leq c\int_{\Omega_{\frac{3R}{8}}\setminus Q^+_{\frac{3R}{8}}(0)}|\bar{A}_{B'_{R}}(0,Dv_b(0,x'))|\;|D_1[\vp(w_b-\bar{v})]|\,dx \nonumber\\
			&\leq c\int_{\Omega_{\frac{3R}{8}}\setminus Q^+_{\frac{3R}{8}}(0)}|Dv_b(0,x')|^{p-1} (R^{-1}|w_b-\tilde{v}|+|D(w_b-\tilde{v})|)\,dx \nonumber\\
			&\leq c\left(\int_{\Omega_{\frac{3R}{8}}\setminus Q^+_{\frac{3R}{8}}(0)}|Dv_b(0,x')|^{2p}\,dx\right)^{\frac{p-1}{2p}}\nonumber\\
   &\quad\quad\quad\times\left(\int_{\Omega_{\frac{3R}{8}}\setminus Q^+_{\frac{3R}{8}}(0)}(R^{-1}|w_b|+|Dw_b|)^p\,dx\right)^{\frac{1}{p}}|\Omega_{\frac{3R}{8}}\setminus Q^+_{\frac{3R}{8}}(0)|^{\frac{1}{2p'}} \nonumber\\
			&\leq c\left(\mean{\Omega_{\frac{3R}{8}}}|Dw_b|^p\,dx\right)|\Omega_{\frac{3R}{8}}\setminus Q^+_{\frac{3R}{8}}(0)|\leq c\delta \lambda^p |\Omega_{\frac{3R}{8}}|,
	\end{align}
 where on the second last inequality we have also used Lemma \ref{lem:ext.5}.
 Combining \eqref{eq:3rd.2}, \eqref{eq:4th.9}, \eqref{eq:4th.10}, \eqref{eq:4th.14} and \eqref{eq:4th.15}, we discover
	\begin{align*}%\label{eq:4th.16}
		\mean{\Omega_{\frac{R}{4}}}|Dw_b-D\bar{v}|^p\,dx\leq\mean{\Omega_{\frac{3R}{8}}}|Dw_b-D\bar{v}|^p\vp^p\,dx\leq c\left(\ep_1+\delta^{\frac{p}{n}}+\delta\right)\lambda^p\leq\ep \lambda^p
	\end{align*}
	by choosing $\ep_1$ and then $\delta\in(0,\delta_1]$ small enough. This completes the proof.
\end{proof}

\subsection{Proof of Theorem \ref{thm:b}}\label{sub.cov.CZ}
Fix $x_0\in\Omega$ and $\rho_0\in (0,1)$ such that $Q_{2\rho_0}(x_0)\subset{\Omega'}$. With $\gamma>p$ and $Q_R(x)\subset Q_{2\rho_0}(x_0)$, we define the following functional:
\begin{align*}
	&\Psi_\delta(x,R)\nonumber\\
	&:= \left(\mean{\Omega_R(x)}|Du|^{p}\,dy\right)^\frac{1}{p} +\frac{1}{\delta}\left(\mean{\Omega_R(x)}(|F|^{p}+1)^{1+\sigma}\,dy\right)^\frac{1}{p(1+\sigma)}+\frac{1}{\delta}\left(\mean{Q_R(x)}|Dg|^{p(1+\sigma)}\,dy\right)^\frac{1}{p(1+\sigma)}
\end{align*}
for $0<\sigma<\min\{1,\frac{\gamma-p}{p}\}$ small enough given by Lemma \ref{lem:2nd}, and for some $\delta\in (0,1)$ which will be determined later. We also define
\begin{align*}
 {\rm Tail}(x,R):= &\left[\sum_{k=1}^{l}\beta_k \Bigg( \left(\mean{\Omega_{3^kR}(x)}|Du|^{p}\,dy\right)^\frac{\theta}{p}+\left(\mean{Q_{3^kR}(x)}|Dg|^{p}\,dy\right)^\frac{\theta}{p}\Bigg)\right]^\frac{1}{\theta} \nonumber\\ 
	& +\left( R^{1-\varrho}\int_{\ern\setminus Q_{3^lR}(x)}\dfrac{|u(y)-(u)_{3^lR}|^{q-1}}{|y-x|^{n+sq}}\,dy\right)^\frac{1}{m-1},
\end{align*}
where $l=l(R)\in\mathbb{N}$ is such that $\frac{\rho_0}{3}\leq 3^lR<\rho_0$, and $\varrho<m-qs$ is any number with $m:=\min\{p,q\}$. Moreover, we set
\begin{align*}
	&\Xi(x,R):=\Psi_\delta(x,R)+{\rm Tail}(x,R) \quad\mbox{and}\nonumber\\
	&\Xi_0(x,R):=
	\Psi_1(x,R)+\left(R^{1-\varrho} \int_{\ern\setminus Q_{R}(x)}\dfrac{|u(y)-(u)_{R}|^{q-1}}{|y-x|^{n+sq}}\,dy\right)^\frac{1}{m-1}.
\end{align*}
Fix $r_1$ and $r_2$ such that $\frac{\rho_0}{3}\leq r_1<r_2\leq\rho_0$. Then, we define 
\begin{align}\label{eq:CZ-B.lamb.1}
	\lambda_1:= \sup \bigg\{\Xi(x,R) \ : \ x\in \Omega_{r_1}(x_0), \ \frac{r_2-r_1}{729}<R\leq \frac{\rho_0}{3}\bigg\}.
\end{align}

Now the proof is divided into five steps as below.\\
\textbf{Step 1}: \textit{Upper bound on $\lambda_1$}. 
Similar to Step 1 of Proposition \ref{prop:h.i.b}, it is easy to deduce that
\begin{align}\label{eq:CZ-B.upper.lamb}
	\lambda_1\leq \frac{c}{\delta} \left(\frac{2\rho_0}{r_2-r_1}\right)^{\frac{2n+sq}{m-1}+\frac{n+p}{p}} \, \Xi_0(x_0,2\rho_0),
\end{align}
where $c=c(\data_2)$ is a constant.

\noindent\textbf{Step 2}:\textit{Vitali covering}. For $\lambda\geq \lambda_1$ and $T>0$, we set
\begin{align*}%\label{eq:CZ-B.e-lamb}
	E_{\lambda}^T:=\Big\{ x\in \Omega_T(x_0) \, : \, \sup_{0\leq R\leq\frac{r_2-r_1}{729}}\Psi_\delta(x,R)>\lambda \Big\}.
\end{align*}
From \eqref{eq:CZ-B.lamb.1}, for each $x\in \Omega_{r_1}(x_0)$ and $R\in \big[\frac{r_2-r_1}{729},\frac{\rho_0}{3}\big]$, there holds
\begin{align*}
	\Psi_\delta(x,R)\leq \lambda_1\leq\lambda.
\end{align*}
Moreover, for each $x\in E_{\lambda}^{r_1}$ with $\lambda\geq\lambda_1$, there exists a radius $R_x\in (0,\frac{r_2-r_1}{729}\big]$ such that
\begin{align}\label{eq:CZ-B.vit.cov.1}
	\Psi_\delta(x,R_x)\geq \lambda \quad\mbox{and}\quad \Psi_\delta(x,R)\leq\lambda\quad\mbox{for all }R\in (R_x,\tfrac{\rho_0}{3}\big].
\end{align}
By Vitali's covering lemma, there exists a sequence of disjoint cylinders $\{Q_{R_{x_j}}(x_j)\}$ such that 
\begin{align}\label{eq:CZ-B.vit.cov.0}
	E_{\lambda}^{r_1}\subset\bigcup_{j=1}^{\infty}\Omega_{5R_{x_j}}(x_j).
\end{align} 
Setting $R_j:=R_{x_j}$ and $Q_j:= Q_{R_{x_j}}(x_j)$, for each $j\in\mathbb{N}$, we choose $l_{j}\in\mathbb{N}$ satisfying 
\begin{align}\label{eq:CZ-B.vit.cov.2}
	\frac{\rho_0}{3}\leq 3^{l_j}R_j<\rho_0.
\end{align}
Since $R_j\leq \frac{2}{729}\frac{\rho_0}{3}$, it is clear that $l_j\geq 5$ for all $j\in\mathbb{N}$.\\
\textbf{Step 3}: \textit{The comparison estimates}. We set $\varsigma\Omega_j:=\Omega_{\varsigma R_j}$ and $\varsigma Q_j:=Q_{\varsigma R_j}$, for $\varsigma>0$.
Let $h_j$ be the weak solution to problem \eqref{eq:1st.eq.b} with $R=27R_j$ and $x_0=x_j$, there. Then, from \eqref{eq:1st.b.cmp} we have
\begin{align*}
	\begin{split}
		&\mean{54\Omega_{j}}|V(Du)-V(Dh_j)|^2\,dx\\
		&\leq(\ep+c(\nu) R_j^{\Bbbk})\mean{81\Omega_{j}}|Du|^p\,dx+ c(\ep,\nu)\mean{81\Omega_{j}}(|F|^p+1)\,dx \\
		&\quad+c(\ep,\nu)\mean{81Q_{j}}|Dg|^p\,dx
		+\nu \left(R_j^{1-\varrho}\int_{\ern\setminus 3^{l_j+3} Q_{j}}\dfrac{|u(y)-(u)_{3^{l_j+3}R_j}|^{q-1}}{|y-x_0|^{n+sq}}\,dy\right)^\frac{p}{m-1} \\
		& \quad+\nu \left[\sum^{l_j}_{k=2}\beta_k \left(\mean{3^{k+3}Q_j}|Dg|^p\,dx\right)^\frac{\theta}{p}+\beta_k\left(\mean{3^{k+3}\Omega_j}|Du|^p\,dx\right)^\frac{\theta}{p}\right]^\frac{p}{\theta},
	\end{split}
\end{align*}
where the constants $\varrho,m,\theta,\tau,\Bbbk$  and $\beta_k$ are as in Lemma \ref{lem:1st.l} and $l_j\geq 5$ is given by \eqref{eq:CZ-B.vit.cov.2}. Proceeding as in Step 3 of the proof of Proposition \ref{prop:h.i.b} (thanks to \eqref{eq:CZ-B.vit.cov.1}), we deduce that
\begin{align}\label{eq:CZ-B.vit.cov.7}
	\mean{54\Omega_{j}}|V(Du)-V(Dh_j)|^2\,dx &\leq (\ep+c(\nu) R_j^{\Bbbk})\lambda^p +c(\ep,\nu) \lambda^p\delta^p +\nu c_{\beta} \lambda^p +\nu \lambda^p \nonumber\\
 &\leq (\ep+c(\nu) R_j^{\Bbbk})\lambda^p +c(\ep,\nu) \lambda^p\delta^p +\nu c \lambda^p \nonumber\\
	&=: S_1(\ep,\nu,\delta,R_j) \lambda^p,
\end{align}
where $c=c(\data_2,\gamma)$ and $c_{\beta}\simeq (\sum_{k=0}^{\infty}\beta_k)^\frac{p}{\theta}\leq c<\infty$.
% and 
% \begin{align}\label{eq:CZ-B.vit.cov.6}
% 	S_1(\ep,\nu,\delta,R_j):=\ep+c(\nu) R_j^{\Bbbk} +c(\ep,\nu) \delta^p +\nu c(\beta).
% \end{align}
Next, for the above $h_j$, let $w_j$ be the weak solution to problem \eqref{eq:2nd.eq.b} with $R=27R_j$ and $x_0=x_j$. Then, by the energy estimates \eqref{eq:energy.est.h.b} and \eqref{eq:energy.est.w.b}, we have
 \begin{align}\label{eq:CZ-B.vit.cov.8}
    \mean{27\Omega_j} |Dw_j|^p\,dx\leq c_1 \lambda^p. 
 \end{align}
 Moreover, from Lemma \ref{lem:2nd} and Proposition \ref{prop:h.i.b} (for $\rho_0=54R_j$ and $x_0=x_j$, there), we have 
 \begin{align}\label{eq:CZ-B.vit.cov.9}
  & \mean{27\Omega_j} |V(Dw_j)-V(Dh_j)|^2\,dx \nonumber\\
  &\leq c \; \delta^{\frac{\sigma}{p'(1+\sigma)}} \left[\left(\mean{54\Omega_{j}}|Du|^{p(1+\sigma)}\,dx\right)^{\frac{1}{1+\sigma}}+c\left(\mean{54\Omega_{j}}|Dg|^{p(1+\sigma)}\,dx\right)^{\frac{1}{1+\sigma}} \right]\nonumber\\
   &\leq c \; \delta^{\frac{\sigma}{p'(1+\sigma)}}  \left[\mean{162\Omega_{j}}|Du|^{p}\,dx +\left(\mean{162\Omega_{j}}(|F|^{p}+1)^{1+\sigma}\,dx\right)^\frac{1}{(1+\sigma)} +\left(\mean{162\Omega_{j}}|Dg|^{p(1+\sigma)}\,dx\right)^{\frac{1}{1+\sigma}} \right. \nonumber\\
		& \left. \qquad+ R_{j}^{(1-\varrho)p'} \left(\int_{\ern\setminus 162 Q_{j}}\dfrac{|u(y)-(u)_{162R_j}|^{q-1}}{|y-x_0|^{n+sq}}\,dy\right)^\frac{p}{m-1}  \right]
    \nonumber\\
   &\leq c \; \delta^{\frac{\sigma}{p'(1+\sigma)}} \lambda^p, 
 \end{align}
 where on the last line we used the exit-time condition \eqref{eq:CZ-B.vit.cov.1}.
 Now we consider two cases: 
\begin{align*}
27Q_{R_j}\not\subset\Omega\quad\text{or}\quad 27Q_{R_j}\subset\Omega.
\end{align*}
If $27Q_{R_j}\not\subset\Omega$ holds, then there exists a point $y_0\in\partial\Omega\cap 27Q_{R_j}$. By the assumption on $\Omega$ as in Definition \ref{def:BMO.b}, there exists a new coordinate system $\{x_1,\dots,x_n\}$ such that in this coordinate system $y_0$ is the origin and there holds
\begin{align*}
Q_{\frac{27}{4}R_j}(0)\cap\{x_1> \tfrac{27}{4}\delta R_j\}\subset\Omega\cap Q_{\frac{27}{4}R_j}(0)\subset Q_{\frac{27}{4}R_j}(0)\cap\{x_1>-\tfrac{27}{4}\delta R_j\}.
\end{align*}
Let the corresponding $v_j$ be as in Lemma \ref{lem:3rd}; i.e., solution to problem \eqref{eq:3rd.eq.b} defined in $Q_{\frac{27}{4}R_j}(0)\cap\{x_1> \tfrac{27}{4}\delta R_j\}$ and extended by $0$ in $Q_{\frac{27}{4}R_j}(0)\cap\{x_1\leq \tfrac{27}{4}\delta R_j\}$. 
Eventually, by \eqref{eq:CZ-B.vit.cov.7}, \eqref{eq:CZ-B.vit.cov.8}, \eqref{eq:CZ-B.vit.cov.9} and Lemma \ref{lem:4th}, we obtain
	\begin{align}\label{eq:CZ-B.vit.cov.5}
		\mean{6\Omega_{j}}|V(Du)-V(Dv_j)|^2\,dx 
		%&\leq (\ep+c_\nu R_j^{q(\theta-\tau)})\lambda^p +(c_\nu+c_\ep) \lambda^p\delta +\nu c_\beta \lambda^p +\nu \lambda^p \nonumber\\
		&\leq S(\ep,\nu,\delta,R_j) \lambda^p,
\end{align}
where
\begin{align*}%\label{eq:CZ-B.vit.cov.6}
	S(\ep,\nu,\delta,R_j):=\ep+c(\nu) R_j^{\Bbbk} +c(\ep,\nu) \delta^p+c_* \; \delta^{\frac{\sigma}{p'(1+\sigma)}}  +c_*\nu
\end{align*}
with $c(\nu)=c(\nu,\data_2,\gamma)$, $c(\ep,\nu)=c(\ep,\nu,\data_2,\gamma)$, and $c_*=c_*(\data_2,\gamma)$ being universal constants. If $27Q_{R_j}\subset\Omega$ holds, then we proceed with $w_j$ instead of $v_j$ and the estimate \eqref{eq:CZ-B.vit.cov.5} holds with $w_j$ replaced by $v_j$ from the results of \cite{KimY1,BL1}.\\
\noindent\textbf{Step 4}: \textit{Estimate involving level sets}. We first obtain some uniform estimates for $|\Omega_j|$. On account of \eqref{eq:CZ-B.vit.cov.1}, similar to Section 3, we have the occurrence of at least one of \eqref{eq:lamb-case1} and \eqref{eq:lamb-case3}. Therefore, after some elementary simplifications, we get 
\begin{align}\label{eq:CZ-B.om-j.1}
	|\Omega_j|&\leq \frac{c(p)}{\lambda^p} \int_{\Omega_j\cap \{|Du|\geq \lambda/4\}}  |Du|^p \,dx \nonumber\\ &+\frac{c(p)}{(\delta\lambda)^{p(1+\sigma)}} \left[\int_{\Omega_j\cap \{(|F|^p+1)^{1/p}\geq \lambda/8\}} \big(|F|^p+1\big)^{1+\sigma}\,dx +\int_{Q_j\cap\{|Dg|\geq\lambda/8\}} |Dg|^{p(1+\sigma)}\,dx \right].
\end{align}
Next, we recall the following inequality (see \cite[Lemma 6.1]{KimY1}):
\begin{align*}
	&\text{Let }\xi,\zeta\in\ern \text{ be such that }|\xi|\geq \kappa \text{ for some }\kappa>0. \text{ Then for all }\alpha\geq p,\\
	&\qquad|\xi|^p\leq c|\xi-\zeta|^p +c(\alpha)\kappa^{p-\alpha}|\zeta|^\alpha.
\end{align*}
Consequently, if $27Q_{R_j}\not\subset\Omega$ holds, we obtain
\begin{align}\label{eq:CZ-B.vit.cov.4}
	\int_{5\Omega_j\cap \{|Du|\geq \kappa \lambda\}}  |Du|^p \,dx \leq c\int_{5\Omega_j\cap \{|Du|\geq \kappa \lambda\}}  \big(|Du-Dv_j|^p + (\kappa\lambda)^{-\gamma}|Dv_j|^{p+\gamma}\big)\,dx.
\end{align}
%{\color{blue}Next, on account of Step 3, we have
	%\begin{align*}
	%	\mean{54\Omega_j}  |Du-Dv_j|^p dx \leq S(\ep,\nu,\delta,R_j) \lambda^p.
	%\end{align*}} 
Moreover, from the $L^{\gamma}$ estimate of Lemma \ref{lem:ext.5} and the exit-time condition \eqref{eq:CZ-B.vit.cov.1}, we have 
		\begin{align}\label{eq:CZ-B.vit.cov.4.1}
			\left(\mean{5\Omega_j}|Dv_j|^{p+\gamma} \right)^\frac{1}{p+\gamma} \lesssim \left(\mean{6\Omega_j}|Dv_j|^{p} \right)^\frac{1}{p} 
			\lesssim \left(\mean{6\Omega_j}|Du|^{p} \right)^\frac{1}{p} + \left(\mean{6\Omega_j}|Du-Dv_j|^{p} \right)^\frac{1}{p}\lesssim \lambda,
	\end{align}
where we have also used the estimate of \eqref{eq:CZ-B.vit.cov.5} and \eqref{eq:V,theta}.
 Therefore, \eqref{eq:CZ-B.om-j.1} and \eqref{eq:CZ-B.vit.cov.4} imply that
	\begin{align*}
		&\int_{5\Omega_j\cap \{|Du|\geq \kappa \lambda\}}  |Du|^p \,dx \nonumber\\
		&\leq \big(S(\ep,\nu,\delta,R_j) + \kappa^{-\gamma}\big)\lambda^p|5\Omega_j| \nonumber\\
		&\lesssim \big(S(\ep,\nu,\delta,R_j) + \kappa^{-\gamma}\big) \left[ \int_{\Omega_j\cap \{|Du|\geq \lambda/4\}}  |Du|^p \,dx +\frac{|\Omega_j|}{|Q_j|}\frac{\lambda^{-p\sigma}}{\delta^{p(1+\sigma)}} \int_{Q_j\cap\{|Dg|\geq\lambda/8\}} |Dg|^{p(1+\sigma)}\,dx \nonumber \right.\\
		&\left.\quad+ 	\frac{\lambda^{-p\sigma}}{\delta^{p(1+\sigma)}}\int_{\Omega_j\cap \{(|F|^p+1)^{1/p}\geq \lambda/8\}} \big(|F|^p+1\big)^{1+\sigma}\,dx\right].
	\end{align*}
 On the other hand, if $27Q_{R_j}\subset\Omega$ holds, \eqref{eq:CZ-B.vit.cov.4} and \eqref{eq:CZ-B.vit.cov.4.1} are obtained with $Dw_j$ instead of $Dv_j$ and so we have the above estimate again. Since $\{Q_j\}$ is a disjoint collection such that each $Q_j$ is contained in  $Q_{r_2}(x_0)$, the above expression and \eqref{eq:CZ-B.vit.cov.0} yield
	\begin{align}\label{eq:CZ-B.vit.cov.3}
		&\int_{\Omega_{r_1}(x_0)\cap \{|Du|\geq \kappa \lambda\}}  |Du|^p \,dx \nonumber\\
		&\leq	\sum_{j=1}^{\infty}\int_{5\Omega_j\cap \{|Du|\geq \kappa \lambda\}}  |Du|^p \,dx \nonumber\\
		&\lesssim \big(S(\ep,\nu,\delta,\rho_0) + \kappa^{-\gamma}\big) \left[\int_{\Omega_{r_2}(x_0)\cap\{|Du|>\kappa\lambda\}}|Du|^{p}\,dx+\frac{\lambda^{-p\sigma}}{\delta^{p(1+\sigma)}} \int_{Q_{r_2}(x_0)\cap\{|Dg|>\hat\kappa\lambda\}}|Dg|^{p(1+\sigma)}\,dx \nonumber\right.\\
		& \left.\quad+\frac{\lambda^{-p\sigma}}{\delta^{p(1+\sigma)}} \int_{\Omega_{r_2}(x_0)\cap\{(|F|^p+1)^{1/p}>\tilde\kappa\lambda\}}\big(|F|^p+1\big)^{1+\sigma}\,dx\right].
	\end{align}
	\textbf{Step 5}: \textit{Conclusion}. For $t\geq 0$, we define the truncated function
	\begin{align*}
		[|Du|]_t:=\min\{|Du|,t\}.
	\end{align*}
	Then, for $t\geq  \lambda_1$, from \eqref{eq:CZ-B.vit.cov.3}, we easily obtain
	\begin{align*}
		\int_{\lambda_1}^{t}\lambda^{\gamma-1-p}&\int_{\Omega_{r_1}(x_0)\cap \{|Du|\geq \kappa \lambda\}}  |Du|^p \,dx d\lambda \nonumber\\
		&\lesssim \big(S(\ep,\nu,\delta,\rho_0) + \kappa^{-\gamma}\big) \left[\int_{\lambda_1}^{t}\lambda^{\gamma-1-p}\int_{\Omega_{r_2}(x_0)\cap\{|Du|>\kappa\lambda\}}|Du|^{p}\,dx d\lambda \nonumber\right.\\
		& \left.\quad+ \int_{\lambda_1}^{t}\lambda^{\gamma-1-p(1+\sigma)} \int_{Q_{r_2}(x_0)\cap\{|Dg|>\hat\kappa\lambda\}}\frac{|Dg|^{p(1+\sigma)}}{\delta^{p(1+\sigma)}}\,dx d\lambda \nonumber\right.\\
		& \left.\quad+ \int_{\lambda_1}^{t}\lambda^{\gamma-1-p(1+\sigma)}\int_{\Omega_{r_2}(x_0)\cap\{(|F|^p+1)^{1/p}>\tilde\kappa\lambda\}}\frac{(|F|^p+1)^{1+\sigma}}{\delta^{p(1+\sigma)}}\,dx d\lambda\right].
	\end{align*}
	Now, by Fubini's theorem and a simple change of variable (see, e.g., \cite{col-ming-JFA}), we can deduce that
	\begin{align}\label{eq:CZ-B.final.2}
		\mean{\Omega_{r_1}(x_0)}& [|Du|]_t^{\gamma-p} |Du|^p \,dx \nonumber\\
		&\leq c\big(S(\ep,\nu,\delta,\rho_0) + \kappa^{-\gamma}\big) \left[\mean{\Omega_{r_2}(x_0)}[|Du|]_t^{\gamma-p}|Du|^{p}\,dx+\ \mean{Q_{r_2}(x_0)}\frac{|Dg|^{\gamma}}{\delta^{\gamma}}\,dx \nonumber\right.\\
		& \left.\quad+ \mean{\Omega_{r_2}(x_0)}\frac{(|F|^p+1)^{\gamma/p}}{\delta^{\gamma}}\,dx\right]+c\kappa^{\gamma}\lambda_1^\gamma,
	\end{align}
	where $c$ depends only on $\data_2$ and $\gamma$. 
 We choose $\kappa=\kappa(\data_2,\gamma)>1$ large enough, $\ep,\nu\in (0,1)$, $\delta\in (0,\delta_0)$ small enough depending only on $\data_2$, and then $\gamma$, and $r_*\in (0,1)$ small enough depending on $\data_2,\gamma$ and $\nu$ (in the same order)  such that  
	\begin{align*}
		c\kappa^{-\gamma} \leq\frac{1}{16}, \;\; c\ep\leq \frac{1}{16}, \;\; c\nu \leq \frac{1}{16}, \;\; c_* c(\ep,\nu)\delta^{\frac{\sigma}{p'(1+\sigma)}}\leq \frac{1}{16}\;\;\mbox{and}\;\; c_*c(\nu)r_*^{\Bbbk}\leq\frac{1}{16}.
	\end{align*}
Consequently, for all $\rho_0\leq r_*$, we have
\begin{align*}
    S(\ep,\nu,\delta,\rho_0) + \kappa^{-\gamma} \leq \frac{1}{2}
\end{align*}
and noting \eqref{eq:CZ-B.upper.lamb}, by a standard iteration argument, we deduce from \eqref{eq:CZ-B.final.2} that
\begin{align*}
    \mean{\Omega_{\frac{\rho_0}{3}}(x_0)} &[|Du|]_t^{\gamma-p} |Du|^p \,dx \nonumber\\
		&\leq  c \mean{Q_{\rho_0}(x_0)}|Dg|^{\gamma}\,dx + \mean{\Omega_{\rho_0}(x_0)}(|F|^p+1)^{\gamma/p}\,dx+c\, \Xi_0(x_0,2\rho_0)^\gamma.
\end{align*}
Taking the limit as $t\to\infty$ in the above expression, by a standard covering argument as in the proof of Proposition \ref{prop:h.i.b}, we obtain the required result of Theorem \ref{thm:b} (where we also use $p(1+\sigma)<\gamma$ and H\"older's inequality in $\Xi_0(x_0,2\rho_0)$). \qed

\medskip	
\textbf{Data availability:} Data sharing not applicable to this article as no datasets were generated or analysed during
the current study.

	% \def\cprime{$'$}
	% \bibliographystyle{amsplain}
	% \bibliography{bibliography}

 \providecommand{\bysame}{\leavevmode\hbox to3em{\hrulefill}\thinspace}
\providecommand{\MR}{\relax\ifhmode\unskip\space\fi MR }
% \MRhref is called by the amsart/book/proc definition of \MR.
\providecommand{\MRhref}[2]{%
  \href{http://www.ams.org/mathscinet-getitem?mr=#1}{#2}
}
\providecommand{\href}[2]{#2}

\end{document}